\documentclass[11pt]{article}

\usepackage[english]{babel}

\usepackage[letterpaper,top=2cm,bottom=2cm,left=3cm,right=3cm,marginparwidth=1.75cm]{geometry}

\usepackage{amsbsy,amssymb,amsmath,amsthm,amscd,amsfonts,latexsym}
\usepackage{graphicx}
\usepackage{microtype}
\usepackage[colorlinks=true, allcolors=blue]{hyperref}
\usepackage{indentfirst}
\usepackage{tikz-cd}
\newtheorem{thm}{Theorem}
\newtheorem{prop}{Proposition}
\newtheorem{lem}[prop]{Lemma}
\newtheorem{cor}[prop]{Corollary}

\newtheorem{conj}[prop]{Conjecture}

\theoremstyle{definition}
\newtheorem{df}[prop]{Definition}

\newtheorem{rmk}[prop]{Remark} 
\newtheorem{ex}[prop]{Example}
\newtheorem{qtn}[prop]{Question}

\newcommand{\cM}{\mathcal{M}}
\newcommand{\cJ}{\mathcal{J}}
\newcommand{\cH}{\mathcal{H}}
\newcommand{\cB}{\mathcal{B}}

\newcommand{\R}{{\mathbb{R}}}
\newcommand{\Z}{{\mathbb{Z}}}
\newcommand{\C}{{\mathbb{C}}}

\newcommand{\N}{{\mathbb{N}}}

\newcommand{\cP}{\mathcal{P}}

\newcommand{\bK}{{\mathbb{K}}}

\newcommand{\cO}{{\mathcal{O}}}
\newcommand{\ahl}{\mathcal{A}_{L_0,H}}
\newcommand{\adH}{\mathcal{H}_{\rm ad}}
\newcommand{\bh}{\hbar_{\rm bar}}
\newcommand{\sbh}{\hbar_{\rm \mathcal{S}bar}}
\newcommand{\ph}{\hbar_{\rm per}}
\newcommand{\sph}{\hbar_{\rm \mathcal{S}per}}

\newcommand{\ham}{\operatorname{Ham} }
\newcommand{\symp}{\operatorname{Symp}}

\newcommand{\im}{\operatorname{im}}

\counterwithin{equation}{section}
\counterwithin{thm}{section}
\counterwithin{prop}{section}

\title{Persistent Entropy of Floer Persistence Barcodes}
\author{Wenmin Gong}

\begin{document}
\maketitle

\begin{abstract}
Floer persistence barcodes provide a quantitative way to encode action-filtered Floer homology. Inspired by the Shannon entropy of persistence barcodes in topological data analysis, we introduce a Floer-theoretic entropy invariant, called \textit{persistent entropy}, which measures the asymptotic linear growth rate, under iteration, of the Shannon entropy determined by the distribution of finite bar lengths. This is complementary to the barcode entropy of \c{C}ineli--Ginzburg--G\"{u}rel, which records the exponential growth rate of the number of  not-too-short bars. We prove that, for Hamiltonian diffeomorphisms, the relative and absolute persistent entropies coincide with the corresponding barcode entropies. For Liouville domains, we prove general comparison inequalities and a subexponential length-growth criterion which gives equality beyond the case of vanishing symplectic homology. We also compute the persistent entropy of cotangent disk bundles of negatively curved manifolds and relate it to the topological entropy of the geodesic flow. In addition, we prove Hofer-stability estimates for finite-level Shannon entropy and derive flexibility and rigidity-type questions for barcode and persistent entropies of Reeb flows.
\end{abstract}

\section{Introduction}\label{sec:1}

Persistent homology has become a useful bridge between symplectic topology and dynamical systems. In Floer theory, the action filtration turns a Floer complex into a persistence module, and the associated barcode records both algebraic information and quantitative action data. Following the work of Polterovich and Shelukhin~\cite{PS}, Floer persistence has been used in many different directions in symplectic topology; see, for instance,~\cite{BP3S2,LNV,LSV,She,UZ}. A particularly influential invariant in this circle of ideas is the \textit{barcode entropy} introduced by \c{C}ineli--Ginzburg--G\"{u}rel~\cite{CGG}. Roughly speaking, barcode entropy measures the exponential growth rate, under iteration, of the number of bars in the Floer barcode whose lengths are larger than a prescribed positive number. It is closely related to topological entropy: in~\cite{CGG}, barcode entropy is bounded above by topological entropy, while compact hyperbolic invariant sets give lower bounds for absolute barcode entropy. In particular, on closed surfaces the absolute barcode entropy of a Hamiltonian diffeomorphism agrees with its topological entropy. In higher dimensions, however, the relation is subtler; for example, \c{C}ineli~\cite{Cin} constructed Hamiltonian pseudo-rotations with positive topological entropy but zero absolute barcode entropy.

Barcode entropy counts long bars, but it does not use the distribution of their lengths. This distribution is itself meaningful. The length of the longest finite bar is the boundary depth~\cite{Us2,Us3}, an invariant with important applications to Hofer geometry and to partial results on the Hofer diameter and Hofer--Zehnder conjectures~\cite{Us2,She}. Thus it is natural to ask whether one can extract a dynamical entropy invariant not only from the number of bars, but also from how the total finite persistence is distributed among them. The starting point of this paper is to apply Shannon entropy to the normalized vector of finite bar lengths. Similar Shannon-type quantities are standard in topological data analysis, where they are often called persistent entropy and enjoy stability under perturbations of persistence diagrams~\cite{AGR,AGS,CGGJK,RCMP}. In the present paper, however, the term \textit{persistent entropy} refers to the growth rate of these Shannon entropies under iteration of Hamiltonian diffeomorphisms or along the action window for Reeb flows.

The guiding question is whether this length-sensitive invariant detects asymptotic information beyond barcode entropy. Our main results show that, for Hamiltonian diffeomorphisms, this does not occur at the level of asymptotic growth rates: the relative and absolute persistent entropies, defined as the linear growth rates of the Shannon entropies of the normalized finite-bar length distributions, agree with the corresponding barcode entropies. Equivalently, the effective number of finite bars encoded by the length distribution has the same exponential growth rate as the number of finite bars whose lengths are bounded from below. Thus, in the Hamiltonian setting, the bar-length distribution and the count of not-too-short bars carry the same asymptotic information at the exponential scale. In the contact setting the situation is more delicate.
 We prove the general inequalities
\[
\ph(X,\lambda)\leq \bh(X,\lambda),\qquad
\sph(X,\lambda)\leq \sbh(X,\lambda),
\]
and obtain equality when the symplectic homology of the filling vanishes. More generally, equality follows whenever the maximal length of the bars counted in the action window grows subexponentially; see Proposition~\ref{prop:subexp-length}. This isolates the only possible mechanism for a strict gap: finite bars whose lengths grow exponentially compared with the action window. A purely algebraic example, Example~\ref{ex:algebraic-gap}, shows that such a gap is possible for abstract barcode families, even though its symplectic realizability remains open. Thus, the equality results in the Hamiltonian and controlled Liouville settings should be viewed as structural Floer-theoretic statements rather than tautological consequences of the definitions.

Another feature of the Shannon-entropy viewpoint is stability. While the asymptotic equalities with barcode entropy are the main focus of the paper, the finite-level Shannon entropy retains information that is not visible from bar counts alone. Using the stability of persistence barcodes and the Hofer-continuity of Floer barcodes, we prove quantitative Hofer-stability estimates for the Shannon entropy of Floer barcodes; see Theorems~\ref{thm:E-stab} and~\ref{thm:E-stab'}. These estimates show that persistent entropy is not merely a formal rephrasing of barcode entropy: before passing to the asymptotic limit, it is a stable numerical statistic of the Floer barcode which is sensitive to the distribution of bar lengths. This stability is one of the motivations for considering Shannon entropy in Floer persistence.

\subsection{Definitions of relative and absolute persistent entropies for Hamiltonian diffeomorphisms}\label{subsec:def-perEntropy}

We now give the definitions used in the paper. They are modeled on the absolute and relative barcode entropies of \c{C}ineli--Ginzburg--G\"{u}rel~\cite{CGG}. Let $(M,\omega)$ be a symplectic manifold which is either compact or sufficiently well behaved at infinity, for instance convex at infinity. Let $L_0,L_1\subset M$ be closed monotone Hamiltonian isotopic Lagrangian submanifolds with minimal Maslov number $N_{L_0}\geq 2$. We refer to Section~\ref{subsec:notation} for the precise conventions.

Assume first that $L_1=\varphi_H^{-1}(L_0)$ for some Hamiltonian $H\in C_c^\infty([0,1]\times M,\R)$ and that $L_0$ and $L_1$ intersect transversely. The Lagrangian Floer complex $CF(L_0,H)$ is generated by Hamiltonian $1$-chords with endpoints on $L_0$ in all path components, and it carries an action filtration. We assume throughout that the corresponding Floer homology, denoted by $HF(L_0,H)$, is nonzero. The intersection points $x\in L_0\cap L_1$ are in one-to-one correspondence with Hamiltonian $1$-chords $t\mapsto \varphi_H^t(x)$. Let $\mathcal B(L_0,H)$ be the barcode of this filtered complex; see Section~\ref{sec:floer}. For $\epsilon>0$, let $\mathcal{FB}^\epsilon(L_0,H)$ be the multiset of finite bars in $\mathcal B(L_0,H)$ whose lengths are larger than $\epsilon$. We write
\[
\mathcal{FB}^\epsilon(L_0,H)
=\big\{([a_i],l_i)\in\R/\Gamma\times [0,\infty]
\mid \epsilon<l_i<\infty,
1\leq i\leq k\big\},
\]
where $\Gamma\subset\R$ is the period group associated with $\omega$; see Section~\ref{subsec:lfc}. Set $S_k=\sum_{i=1}^k l_i$. The Shannon entropy of $\mathcal{FB}^\epsilon(L_0,H)$ is
\[
E^\epsilon(L_0,L_1)
=
-\sum_{i=1}^k \frac{l_i}{S_k}\log\frac{l_i}{S_k},
\]
with the convention that $E^\epsilon(L_0,L_1)=0$ if $\mathcal{FB}^\epsilon(L_0,H)$ is empty. Throughout the paper, logarithms are taken in base $2$, and we set $p\log p=0$ for $p=0$.

When $L_0$ and $L_1$ are not transverse, we define
\[
E^\epsilon(L_0,L_1)=\liminf_{L'\to L_1}E^\epsilon(L_0,L'),
\]
where the limit is taken over smooth Lagrangian submanifolds $L'$ which are Hamiltonian isotopic to $L_1$, intersect $L_0$ transversely, and converge to $L_1$ in the $C^\infty$ topology.

\begin{df}[Relative Persistent Entropy I]\label{df:relper1.1}
Let $\varphi\in\ham(M,\omega)$. The \textit{$\epsilon$-persistent entropy of $\varphi$ relative to $(L_0,L_1)$} is
\[
\ph^\epsilon(\varphi;L_0,L_1)
=
\limsup_{n\to\infty}
\frac{E^\epsilon(L_0,\varphi^{-n}(L_1))}{n},
\]
and the \textit{persistent entropy of $\varphi$ relative to $(L_0,L_1)$} is
\[
\ph(\varphi;L_0,L_1)
=
\limsup_{\epsilon\searrow0}\ph^\epsilon(\varphi;L_0,L_1)
\in[0,\infty].
\]
\end{df}

If $L\subset M$ is a closed monotone Lagrangian submanifold with $N_L\geq2$, we also use the following single-Lagrangian version.

\begin{df}[Relative Persistent Entropy II]\label{df:relperII}
The \textit{$\epsilon$-persistent entropy of $\varphi$ relative to $L$} is
\[
\ph^\epsilon(\varphi;L)
=
\limsup_{n\to\infty}
\frac{E^\epsilon(L,\varphi^{-n}(L))}{n},
\]
and the \textit{persistent entropy of $\varphi$ relative to $L$} is
\[
\ph(\varphi;L)
=
\limsup_{\epsilon\searrow0}\ph^\epsilon(\varphi;L)
\in[0,\infty].
\]
\end{df}

We next define the absolute version. Let $(M,\omega)$ be a closed monotone symplectic manifold and let $\varphi=\varphi_H^1$ be a Hamiltonian diffeomorphism. One can either apply the relative construction to the diagonal $\Delta\subset (M\times M,-\omega\oplus\omega)$ and the map $id\times\varphi$, or work directly with the Hamiltonian Floer complex $CF_*(H)$ generated by Hamiltonian $1$-periodic orbits over $\Lambda$ in all free homotopy classes. Applying singular value decomposition to this filtered complex gives a barcode $\mathcal B(H)$. If $G$ is another Hamiltonian with $\varphi_G^1=\varphi_H^1$, then the barcodes in each free homotopy class differ only by a filtration shift; see Remark~\ref{rmk:shift}. In particular, the finite bar lengths, and hence their Shannon entropy, are independent of the chosen Hamiltonian up to the standard limiting procedure.

Let $E^\epsilon(\varphi^n)$ be the lower limit of the Shannon entropies of $\mathcal{FB}^\epsilon(H')$ as $H'$ converges to $H^{\sharp n}$ in the $C^\infty$ topology and $(id\times\varphi_{H'}^1)(\Delta)$ is transverse to $\Delta$. Then
\begin{equation}\label{e:H=L}
E^\epsilon(\varphi^n)=E^\epsilon(\Delta,(id\times\varphi)^{-n}(\Delta)).
\end{equation}

\begin{df}[Absolute Persistent Entropy]\label{df:relperIII}
The \textit{$\epsilon$-persistent entropy of $\varphi$} is
\[
\ph^\epsilon(\varphi)
=
\limsup_{n\to\infty}\frac{E^\epsilon(\varphi^n)}{n},
\]
and the \textit{absolute persistent entropy of $\varphi$} is
\[
\ph(\varphi)
=
\limsup_{\epsilon\searrow0}\ph^\epsilon(\varphi)
\in[0,\infty].
\]
\end{df}

For zero-entropy systems, it is also useful to consider polynomial growth rates, as in slow entropy theory~\cite{KT,FS,BG}.

\begin{df}[Slow Persistent Entropy]
The \textit{relative slow persistent entropy of $\varphi$ with respect to $(L_0,L_1)$} is
\[
\sph(\varphi;L_0,L_1)
=
\limsup_{\epsilon\searrow0}\sph^\epsilon(\varphi;L_0,L_1)
\in[0,\infty],
\]
where
\[
\sph^\epsilon(\varphi;L_0,L_1)
=
\limsup_{n\to\infty}
\frac{E^\epsilon(L_0,\varphi^{-n}(L_1))}{\log n}.
\]
The absolute slow persistent entropy $\sph(\varphi)$ is defined similarly by replacing $E^\epsilon(L_0,\varphi^{-n}(L_1))$ with $E^\epsilon(\varphi^n)$.
\end{df}

\begin{rmk}[Exact Lagrangians]
Definition~\ref{df:relper1.1} and the construction extend word-for-word to exact Lagrangian submanifolds.
\end{rmk}

\subsection{Persistent entropy of Liouville domains}\label{subsec:def-per-Liou}

Let $(X,\lambda)$ be a Liouville domain, i.e. a compact exact symplectic manifold with boundary such that the Liouville vector field $Z$, defined by $d\lambda(Z,-)=\lambda$, points outward along $\partial X$. Then $(\partial X,\theta:=\lambda|_{\partial X})$ is a contact manifold with contact structure $\xi=\ker\theta$. A contact manifold $(Y,\xi)$ is said to be Liouville domain fillable if $Y=\partial X$ and $\xi=\ker(\lambda|_Y)$ for some Liouville domain $(X,\lambda)$.

Fix a ground field $\bK$. We denote by $SH^s(X,\lambda)$ the filtered symplectic homology of $(X,\lambda)$ over $\bK$ below action level $s$. The family $\{SH^s(X,\lambda)\}_{s\in\R}$ is a persistence module, and we write $\mathcal B(X,\lambda)$ for its barcode; see~\cite{CGG2,FLS,PRSZ} and Section~\ref{subsec:symphomology}.

For $\epsilon>0$ and $s\in\R$, let $E_s^\epsilon(X,\lambda)$ be the Shannon entropy of the multiset $\mathcal{FB}_{s}^\epsilon(X,\lambda)$ consisting of the following finite intervals: finite bars $(a,b]$ in $\mathcal B(X,\lambda)$ with $b-a>\epsilon$ and $a<s-\epsilon$, and truncated infinite bars $(a,s]$ coming from bars $(a,\infty)$ with $a<s-\epsilon$. Equivalently,
\[
E_s^\epsilon(X,\lambda)
=E(\mathcal{FB}_{s}^\epsilon(X,\lambda)),
\]
where $E$ denotes the Shannon entropy of a barcode; see Definition~\ref{df:Shanentropy}.

\begin{df}\label{df:per_Liou}
The \textit{$\epsilon$-persistent entropy} of $(X,\lambda)$ is
\[
\ph^\epsilon(X,\lambda)
=
\limsup_{s\to\infty}\frac{E_s^\epsilon(X,\lambda)}{s},
\]
and the \textit{persistent entropy} of $(X,\lambda)$ is
\[
\ph(X,\lambda)
=
\limsup_{\epsilon\searrow0}\ph^\epsilon(X,\lambda).
\]
The \textit{slow persistent entropy} of $(X,\lambda)$ is
\[
\sph(X,\lambda)
=
\limsup_{\epsilon\searrow0}
\limsup_{s\to\infty}
\frac{E_s^\epsilon(X,\lambda)}{\log s}.
\]
\end{df}

\subsection{Main results and open problems}

Recall that $\bh(\varphi;L_0,L_1)$ and $\sbh(\varphi;L_0,L_1)$ denote the relative barcode entropy and slow barcode entropy of $\varphi$ with respect to $(L_0,L_1)$, respectively. Our first main result says that, the linear growth rate of the Shannon entropy of finite bar lengths coincides the exponential growth rate of the number of not-too-short bars.

\begin{thm}\label{thm:Lpervsbar}
Let $L_0$ and $L_1$ be two closed Hamiltonian isotopic monotone Lagrangian submanifolds of a tame symplectic manifold $(M,\omega)$ with $N_{L_0}\geq 2$. Then, for any $\varphi\in\ham(M,\omega)$, one has
\begin{equation}\label{e:perbar}
\ph(\varphi;L_0,L_1)= \bh(\varphi;L_0,L_1).
\end{equation}
Moreover,
\begin{equation}\label{e:sperbar}
\sph(\varphi;L_0,L_1)\leq \sbh(\varphi;L_0,L_1).
\end{equation}
If the boundary depths $\beta_n:=\beta(L_0,\varphi^{-n}(L_1))$ satisfy
\[
\limsup_{n\to\infty}\frac{\log \beta_n}{\log n}=0,
\]
then equality holds in~\eqref{e:sperbar}.
\end{thm}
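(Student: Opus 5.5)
The plan is to compare the Shannon entropy of a finite multiset of lengths with the logarithm of its cardinality. Let $b^\epsilon_n$ be the number of finite bars of length $>\epsilon$ in $\mathcal B(L_0,\varphi^{-n}(L_1))$, so that $\bh^\epsilon=\limsup_n \frac{\log b^\epsilon_n}{n}$ and $\bh=\lim_{\epsilon\searrow0}\bh^\epsilon$. I assume, as in \cite{CGG}, that $\bh^\epsilon$ is monotone in $\epsilon$.

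\textbf{Upper bound.} Shannon entropy of a probability vector with $k$ entries is at most $\log k$. Hence $E^\epsilon(L_0,L')\le \log b^\epsilon(L_0,L')$ for every transverse $L'$. To pass to the non-transverse case, take $\liminf$ over $L'\to\varphi^{-n}(L_1)$. By Hofer (or $C^\infty$) continuity of barcodes, bars of length $>\epsilon$ for $L'$ close enough come from bars of length $>\epsilon-\delta$ of the limit. This gives $E^\epsilon(L_0,\varphi^{-n}(L_1))\le \log b^{\epsilon/2}_n$ for small perturbations. Dividing by $n$ (respectively by $\log n$) and letting $\epsilon\searrow0$ yields $\ph\le\bh$ and $\sph\le\sbh$.

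\textbf{Lower bound.} Fix $\epsilon$ and a configuration with bars $l_1,\dots,l_k$, where $\epsilon<l_i\le\beta$ and $\beta$ is the boundary depth. Write $p_i=l_i/S_k$. Then
\[
p_i\le \frac{\beta}{k\epsilon},\qquad\text{so}\qquad -\log p_i\ge \log k-\log(\beta/\epsilon).
\]
Averaging with weights $p_i$ gives
\[
E\ \ge\ \log k-\log\beta+\log\epsilon .
\]
The only remaining task is to control $\beta_n$. In the Hamiltonian setting the boundary depth is bounded by the oscillation (Hofer norm) of the generating Hamiltonian, so $\beta_n\le n\|H\|+C$ grows at most linearly. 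Therefore $\log\beta_n=O(\log n)=o(n)$, and dividing by $n$ gives $\ph^\epsilon\ge\bh^\epsilon$. The limit $\epsilon\searrow0$ then gives equality in \eqref{e:perbar}.

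The linear bound on $\beta_n$ is the step needing care. I would obtain it from Hofer continuity of barcodes, comparing $L_0$ with $\varphi^{-n}(L_1)=\varphi_{H^{\sharp n}}^{-1}(L_1)$, together with a fixed contribution for $L_1$ versus $L_0$. The same argument also handles the non-transverse limits, since perturbations change $\beta$ by arbitrarily little.

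\textbf{Slow case.} Dividing the lower bound by $\log n$ gives
\[
\frac{E^\epsilon_n}{\log n}\ \ge\ \frac{\log b^\epsilon_n}{\log n}-\frac{\log\beta_n}{\log n}-\frac{|\log\epsilon|}{\log n}.
\]
Under the hypothesis $\log\beta_n/\log n\to0$, this yields $\sph\ge\sbh$ and hence equality in \eqref{e:sperbar}. Without the hypothesis, the linear growth of $\beta_n$ only costs a term of order $1$, which explains why only the inequality is asserted in general.

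The main obstacle is bookkeeping in the liminf over transverse perturbations. The entropy is a liminf, while the bar counts in \cite{CGG} are defined through a similar limiting procedure. I would align them by taking one sequence of perturbations $L'_j$ that realizes the liminf of $E^\epsilon$, and using the $\epsilon/2$ shift in both directions so that bar counts and $\beta$ for $L'_j$ are controlled by those of the limit.
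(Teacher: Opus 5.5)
Your proposal is correct and follows essentially the same route as the paper. You use the upper bound $E\le\log k$, the lower bound $E\ge \log k+\log\epsilon-\log\beta_n$ obtained from $p_i\le \beta_n/(k\epsilon)$, and at most linear growth of $\beta_n$ from Hofer continuity of the boundary depth. Two minor differences are worth noting. Your $b^\epsilon_n$ counts only finite bars, whereas $\bh$ counts all bars of length $>\epsilon$; the two counts differ by the constant $\dim_\Lambda HF(L_0,L_0)$, so the growth rates are unaffected. Your $\epsilon/2$-shift treatment of the non-transverse $\liminf$ is, if anything, more careful than the paper's.
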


\begin{cor}\label{cor:Lpervsbar}
Let $L$ be a closed monotone Lagrangian submanifold with minimal Maslov number $N_L\geq2$ in a tame symplectic manifold $(M,\omega)$. Then, for every $\varphi\in\ham(M,\omega)$,
\[
\ph(\varphi;L)=\bh(\varphi;L).
\]
\end{cor}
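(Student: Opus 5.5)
The corollary is the special case $L_0=L_1=L$ of Theorem~\ref{thm:Lpervsbar}, so the plan is to check that this specialization is legitimate and matches the definitions on both sides.

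First, the hypotheses. Take $L_0=L_1=L$. Then $L_1$ is trivially Hamiltonian isotopic to $L_0$ via the identity isotopy. Both are closed monotone Lagrangians with $N_{L_0}=N_L\geq 2$, and $(M,\omega)$ is tame by assumption. So Theorem~\ref{thm:Lpervsbar} applies to the pair $(L,L)$ for every $\varphi\in\ham(M,\omega)$, and gives
\[
\ph(\varphi;L,L)=\bh(\varphi;L,L).
\]

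Second, the definitions. Comparing Definition~\ref{df:relperII} with Definition~\ref{df:relper1.1}, the sequence $E^\epsilon(L,\varphi^{-n}(L))$ in the single-Lagrangian definition is exactly $E^\epsilon(L_0,\varphi^{-n}(L_1))$ with $L_0=L_1=L$. The same $\limsup_{n}(\cdot)/n$ and $\limsup_{\epsilon\searrow0}$ are then applied, so $\ph(\varphi;L)=\ph(\varphi;L,L)$ holds by definition. This includes the non-transverse case, where both sides use the same $\liminf$ over transverse Hamiltonian perturbations $L'\to\varphi^{-n}(L)$. In the same way, the single-Lagrangian barcode entropy of \c{C}ineli--Ginzburg--G\"{u}rel is defined as $\bh(\varphi;L)=\bh(\varphi;L,L)$. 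Substituting both identities into the displayed equality proves the corollary.

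There is no real obstacle here. The only point needing care is that the two barcode-entropy conventions agree: the single-Lagrangian $\bh(\varphi;L)$ must be taken as the relative one with $L_0=L_1=L$, using the same normalization, namely the iterates $\varphi^{-n}(L)$ rather than $\varphi^{n}(L)$ and the same $\epsilon$-convention for counting bars. If the barcode entropy were instead defined with $\varphi^{n}(L)$, I would apply the Hamiltonian diffeomorphism $\varphi^{n}$ to both Lagrangians. This identifies the filtered Floer complexes of $(L,\varphi^{-n}(L))$ and $(\varphi^{n}(L),L)$ up to a filtration shift, which preserves bar lengths. Floer homology is symmetric under swapping the two Lagrangians up to duality, and this also preserves the multiset of finite bar lengths. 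So the count of bars longer than $\epsilon$ is unchanged, and the two conventions agree.
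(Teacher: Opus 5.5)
Your proposal is correct and matches the paper, which states the corollary without a separate proof as the specialization $L_0=L_1=L$ of Theorem~\ref{thm:Lpervsbar}; there, Definition~\ref{df:relperII} is simply Definition~\ref{df:relper1.1} applied to the pair $(L,L)$. Your extra discussion of the $\varphi^{n}$ versus $\varphi^{-n}$ convention is harmless but not needed, since the paper's Definition~\ref{df:barEntropy} already uses $\varphi^{-n}(L_1)$; in any case $\bh(\varphi;L_0,L_1)=\bh(\varphi^{-1};L_0,L_1)$ by \cite{CGG}.
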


Since $\ph(id\times\varphi;\Delta)=\ph(\varphi)$ and $\bh(id\times\varphi;\Delta)=\bh(\varphi)$, the relative theorem gives the absolute statement.

\begin{thm}\label{thm:abpervsbar}
Let $\varphi:M\to M$ be a Hamiltonian diffeomorphism of a closed monotone symplectic manifold $(M,\omega)$. Then
\[
\ph(\varphi)=\bh(\varphi),
\]
and
\begin{equation}\label{e:abslowper}
\sph(\varphi)\leq\sbh(\varphi).
\end{equation}
Moreover, if the boundary depths $\beta_n:=\beta(\varphi^n)$ satisfy
\[
\limsup_{n\to\infty}\frac{\log \beta_n}{\log n}=0,
\]
then equality holds in~\eqref{e:abslowper}.
\end{thm}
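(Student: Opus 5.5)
The plan is to reduce Theorem~\ref{thm:abpervsbar} to the relative statement, Theorem~\ref{thm:Lpervsbar}, applied to the diagonal. Consider the closed monotone symplectic manifold $(M\times M,-\omega\oplus\omega)$. The diagonal $\Delta$ is a closed monotone Lagrangian. Its minimal Maslov number is $N_\Delta=2N_M$, where $N_M$ is the minimal Chern number of $M$, so $N_\Delta\geq 2$. The map $\psi:=id\times\varphi$ lies in $\ham(M\times M)$, generated by the Hamiltonian $0\oplus H$. Therefore Theorem~\ref{thm:Lpervsbar}, through Corollary~\ref{cor:Lpervsbar} with $L=\Delta$, gives $\ph(\psi;\Delta)=\bh(\psi;\Delta)$. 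It also gives $\sph(\psi;\Delta)\leq\sbh(\psi;\Delta)$, with equality under the subexponential (in $\log n$) growth hypothesis on $\beta(\Delta,\psi^{-n}(\Delta))$.

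The main work is matching the absolute and relative invariants term by term. For every $n$ and $\epsilon$, I will use identity~\eqref{e:H=L}:
\[
E^\epsilon(\varphi^n)=E^\epsilon(\Delta,\psi^{-n}(\Delta)).
\]
It rests on the standard isomorphism of filtered complexes $CF(\Delta,0\oplus H')\cong CF_*(H')$. Under this isomorphism, Hamiltonian chords from $\Delta$ to $\Delta$ in $M\times M$ correspond to $1$-periodic orbits of $H'$, with matching actions, and the Floer strips correspond to Floer cylinders. So the barcodes, and hence the multisets $\mathcal{FB}^\epsilon$, agree for every nondegenerate perturbation $H'$ of $H^{\sharp n}$. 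Passing to the lower limit over perturbations on both sides gives~\eqref{e:H=L}.

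I need to check one point carefully. The relative definition takes the $\liminf$ over all Lagrangians $L'\to\psi^{-n}(\Delta)$, whereas the absolute one uses only graph-type perturbations $(id\times\varphi_{H'}^1)^{-1}(\Delta)$. I will handle this by noting two facts. First, any Lagrangian $C^\infty$-close to the graph of a diffeomorphism is itself such a graph, via a Weinstein neighborhood of $\Delta$. Second, every Hamiltonian diffeomorphism $C^\infty$-close to $\varphi^n$ is generated by some $H'$ close to $H^{\sharp n}$. The same identification of complexes gives $\beta(\varphi^n)=\beta(\Delta,\psi^{-n}(\Delta))$. In the same way, the barcode entropies coincide, $\bh(\psi;\Delta)=\bh(\varphi)$ and $\sbh(\psi;\Delta)=\sbh(\varphi)$, as in~\cite{CGG}.

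With these identities in hand, I divide by $n$ (respectively by $\log n$) and take $\limsup_{n}$ and then $\limsup_{\epsilon\searrow0}$. This gives $\ph(\varphi)=\ph(\psi;\Delta)$ and $\sph(\varphi)=\sph(\psi;\Delta)$. Substituting these into the relative conclusions proves the equality $\ph(\varphi)=\bh(\varphi)$ and inequality~\eqref{e:abslowper}. Because the boundary depths are identified, the hypothesis on $\beta(\varphi^n)$ is exactly the hypothesis of Theorem~\ref{thm:Lpervsbar}, so equality in~\eqref{e:abslowper} follows under that hypothesis. The only genuinely delicate step is the perturbation and limit matching behind~\eqref{e:H=L}. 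The substantive inequalities are inherited entirely from Theorem~\ref{thm:Lpervsbar}.
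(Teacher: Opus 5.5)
Your proposal is correct and follows the paper's own argument: the paper proves Theorem~\ref{thm:abpervsbar} in one line, by applying Theorem~\ref{thm:Lpervsbar} to the diagonal $\Delta\subset(M\times M,-\omega\oplus\omega)$ and $id\times\varphi$, using $\ph(id\times\varphi;\Delta)=\ph(\varphi)$ (via \eqref{e:H=L}) and $\bh(id\times\varphi;\Delta)=\bh(\varphi)$. Your extra checks on matching the two $\liminf$'s over perturbations and on identifying the boundary depths fill in details the paper takes for granted; for the perturbation point, the simplest justification is the independence of the approximating sequence noted in Remark~\ref{rmk:stability}, because a nearby graph in $M\times M$ is a priori only the graph of a symplectomorphism.
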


Combining Theorems~\ref{thm:Lpervsbar} and~\ref{thm:abpervsbar} with the upper bounds for barcode entropy from~\cite{CGG}, we obtain finiteness of the corresponding persistent entropies for compactly supported Hamiltonian diffeomorphisms. Moreover, recent work of \c{C}ineli--Ginzburg--G\"{u}rel~\cite{CGG4} relates barcode entropy to metric entropy of pseudo-chord measures. Through the equalities above, persistent entropy inherits the same dynamical lower-bound properties.

The stability estimates proved later in the paper complement these asymptotic identities. At the finite-barcode level, Theorems~\ref{thm:E-stab} and~\ref{thm:E-stab'} give explicit continuity estimates for Shannon entropy with respect to Hofer distance, for Lagrangian pairs and Hamiltonian diffeomorphisms respectively. These estimates are not needed for the proof of Theorems~\ref{thm:Lpervsbar} and~\ref{thm:abpervsbar}; rather, they show that the length-distribution statistic underlying persistent entropy is compatible with the natural metric geometry of Hamiltonian dynamics.

Assume now that $(X,\lambda)$ is a Liouville domain. Let $\bh(X,\lambda)$ and $\sbh(X,\lambda)$ denote the barcode entropy and slow barcode entropy of the Reeb flow of the contact form $\theta=\lambda|_{\partial X}$; see Definition~\ref{df:bar_Liou}. The contact analogue of the Hamiltonian comparison theorem is the following.

\begin{thm}\label{thm:liouville}
Let $(X,\lambda)$ be a Liouville domain. Then
\[
\ph(X,\lambda)\leq\bh(X,\lambda),
\qquad
\sph(X,\lambda)\leq\sbh(X,\lambda).
\]
Moreover, if $SH(X,\lambda)=0$, then both inequalities are equalities.
\end{thm}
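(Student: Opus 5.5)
The plan is to reduce everything to the elementary bound that the Shannon entropy of a probability vector with $k$ entries is at most $\log k$, and to compare $k$ with the bar count defining barcode entropy. I assume Definition~\ref{df:bar_Liou} uses the count $b_s^\epsilon(X,\lambda)$ of bars of length $>\epsilon$ that start below $s$ (finite bars plus infinite bars truncated at $s$), with $\bh(X,\lambda)=\limsup_{\epsilon\searrow0}\limsup_{s\to\infty}\frac{\log b_s^\epsilon}{s}$ and $\sbh$ given by the same expression with $\log s$ in the denominator.

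\textbf{Upper bound.} Every interval in $\mathcal{FB}_s^\epsilon(X,\lambda)$ has length $>\epsilon$, since finite bars are required to be longer than $\epsilon$ and truncated infinite bars $(a,s]$ have $a<s-\epsilon$. These intervals are also among the bars counted by $b^{\epsilon}_{s}$. Here I may need to compare with $b^{\epsilon'}_{s'}$ for slightly shifted parameters, depending on the exact convention, which does not affect growth rates. Hence $E^\epsilon_s(X,\lambda)\le \log \#\mathcal{FB}^\epsilon_s\le \log b^{\epsilon}_s$. Dividing by $s$ (respectively $\log s$) and taking $\limsup_{s\to\infty}$ and then $\limsup_{\epsilon\searrow0}$ gives both inequalities. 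No Floer theory is needed beyond finiteness of the bar counts in each window, which follows from nondegeneracy or perturbation of the contact form, as in~\cite{CGG2,FLS}.

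\textbf{Lower bound when $SH(X,\lambda)=0$.} Vanishing symplectic homology means there are no infinite bars in the barcode of the filtered theory. Only the finitely many bars coming from $H(X,\partial X)$ could be infinite, and these die in the limit when $SH=0$. More importantly, $SH=0$ forces a uniform bound on bar lengths: the unit dies at some action $c<\infty$, and the module structure over the pair-of-pants or unit action shows that every class dies within action $c$. Thus every finite bar has length $\le c$, and every truncated infinite bar, if any survive, has length at most $s+C$ and there are boundedly many of them. Let $N=\#\mathcal{FB}^\epsilon_s$, with lengths $l_i\in(\epsilon,c]$. Then $p_i=l_i/S\le c/(N\epsilon)$, so
\[
E^\epsilon_s=-\sum p_i\log p_i\ge \sum p_i\log\frac{N\epsilon}{c}=\log N-\log\frac{c}{\epsilon}.
\]
The boundedly many long truncated bars carry weight at most $O(s)/(N\epsilon)$, which is negligible when $N$ grows faster than $s$; when $N$ does not, both sides have zero growth, or a separate check handles the slow case. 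Since $N$ equals $b^\epsilon_s$ up to a bounded number, dividing by $s$ or $\log s$ and taking limits gives the reverse inequalities. This is the same mechanism as the subexponential length-growth criterion, Proposition~\ref{prop:subexp-length}, and I would phrase it as a special case: bounded lengths are in particular subexponential.

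\textbf{Main obstacle.} The hardest step is justifying the uniform bound on finite bar lengths from $SH(X,\lambda)=0$. The filtered symplectic homology is a module over $SH$ with unit $1_s$ that vanishes at some finite level $c$. I would try to show that the filtered product with the unit (the continuation isomorphism) kills every class within a shift of $c$, using the action-filtered pair-of-pants product from~\cite{CGG2} or the corresponding argument of~\cite{FLS}. A second subtlety is the treatment of the finitely many truncated bars of length $\sim s$ in the slow-entropy case. There I would check directly that their contribution to $E^\epsilon_s$ is $O(1)$ when they are boundedly many, using that $x\mapsto -x\log x$ is bounded on $[0,1]$.
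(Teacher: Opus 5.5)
Your proposal is correct and is essentially the paper's proof: the upper bound is $E^\epsilon_s\le\log|\mathcal{FB}^\epsilon_s|$, and when $SH(X,\lambda)=0$ the paper cites the uniform bar-length bound of Lemma~\ref{lem:upbdbd} (your unit/pair-of-pants sketch is the standard proof of it) and plugs it into the estimate $E^\epsilon_s\ge\log N+\log\epsilon-\log U^\epsilon_s$ of Proposition~\ref{prop:subexp-length}. There are two small corrections: first, $N=|\mathcal{FB}^\epsilon_s|=b^\epsilon_{s-\epsilon}$ need not differ from $b^\epsilon_s$ by a bounded amount, since many bars can be born in $[s-\epsilon,s)$, although the substitution $t=s-\epsilon$ shows both have the same exponential and polynomial growth rates, which is (\ref{e:dyn=bar}); second, $SH=0$ means there are no infinite bars at all, so your hedging about truncated infinite bars is unnecessary (and it would not actually be harmless in the slow case, cf.\ Example~\ref{ex:algebraic-gap}).
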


The proof gives a more flexible criterion: the vanishing of $SH(X,\lambda)$ can be replaced by a subexponential upper bound on the maximal length of the bars counted in the action window; see Proposition~\ref{prop:subexp-length}. This criterion will also be used below for cotangent disk bundles of negatively curved manifolds, where the symplectic homology is typically nonzero.

\begin{rmk}
A direct consequence of Theorem~\ref{thm:liouville} is that, for any star-shaped domain $W\subset\R^{2n}$ with smooth boundary, the barcode entropy and persistent entropy of the Reeb flow on $\partial W$ agree; the same is true for their slow versions.
\end{rmk}

The equality part of Theorem~\ref{thm:liouville} uses the vanishing of symplectic homology. This removes infinite bars from the full symplectic homology barcode and provides uniform control on the finite bars. It is natural to ask whether this assumption is essential.

\begin{qtn}\label{qtn:liouville-equality}
Let $(X,\lambda)$ be a Liouville domain. Is it always true that
\[
\ph(X,\lambda)=\bh(X,\lambda),
\qquad
\sph(X,\lambda)=\sbh(X,\lambda)?
\]
Equivalently, can the assumption $SH(X,\lambda)=0$ in Theorem~\ref{thm:liouville} be removed? If not, can one construct a Liouville domain with $SH(X,\lambda)\neq0$ for which one of the inequalities
\[
\ph(X,\lambda)\leq\bh(X,\lambda),
\qquad
\sph(X,\lambda)\leq\sbh(X,\lambda)
\]
is strict? More generally, can the possible gap be detected by the infinite bars or by the asymptotic distribution of long finite bars in the symplectic homology barcode?
\end{qtn}

Persistent entropy is positive whenever the corresponding barcode entropy is forced to be positive. We record two consequences of known lower bounds for barcode entropy. First, if $K\subset M$ is a compact hyperbolic invariant set of a Hamiltonian diffeomorphism, then the lower bounds of \c{C}ineli--Ginzburg--G\"{u}rel~\cite{CGG} and Meiwes~\cite{Me}, together with Theorems~\ref{thm:Lpervsbar} and~\ref{thm:abpervsbar}, give the following.

\begin{thm}\label{thm:lbd}
Let $\varphi:M\to M$ be a Hamiltonian diffeomorphism of a closed monotone symplectic manifold $(M,\omega)$, and let $K\subset M$ be a compact hyperbolic invariant set of $\varphi$. If $h_{\mathrm{top}}(\varphi|_K)>0$, then
\[
0<h_{\mathrm{top}}(\varphi|_K)\leq\ph(\varphi).
\]
\end{thm}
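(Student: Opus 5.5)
The inequality follows by chaining Theorem~\ref{thm:abpervsbar} with the known lower bound for absolute barcode entropy coming from hyperbolic invariant sets. No new Floer-theoretic input is needed beyond what is stated earlier.

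The first step is to invoke the result of \c{C}ineli--Ginzburg--G\"{u}rel~\cite{CGG}. For a Hamiltonian diffeomorphism $\varphi$ of a closed monotone symplectic manifold and a compact hyperbolic invariant set $K$, it gives
\[
h_{\mathrm{top}}(\varphi|_K)\leq\bh(\varphi).
\]
The refinement of Meiwes~\cite{Me} covers the case where the hyperbolic set is not locally maximal or the setting is slightly more general. I would check that the hypotheses in~\cite{CGG,Me} match the present ones: closed monotone $(M,\omega)$, the Floer complex over $\Lambda$ taken in all free homotopy classes, and the same $\epsilon$-limiting convention defining $\bh(\varphi)$.

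The recalled argument runs as follows. Hyperbolicity gives shadowing, so $\varphi^n$ has many isolated nondegenerate periodic orbits near $K$; their number grows like $2^{n h_{\mathrm{top}}(\varphi|_K)}$ after choosing $(n,\delta)$-separated orbits. A crossing-energy bound, uniform in $n$, shows that each such orbit produces a bar of length at least some fixed $\epsilon_0>0$ in the barcode of $\varphi^n$. Hence the number of bars longer than $\epsilon_0$ grows at least at rate $h_{\mathrm{top}}(\varphi|_K)$.

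The second step applies the first identity of Theorem~\ref{thm:abpervsbar}, namely $\ph(\varphi)=\bh(\varphi)$. This yields $h_{\mathrm{top}}(\varphi|_K)\leq\ph(\varphi)$, and the hypothesis $h_{\mathrm{top}}(\varphi|_K)>0$ gives strict positivity.

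The only real point of care is compatibility of the two bar-counting conventions. Absolute barcode entropy in~\cite{CGG} counts all bars of length greater than $\epsilon$, including infinite bars, and those number at most $\dim H_*(M)$, so they do not affect exponential growth. Our $E^\epsilon(\varphi^n)$ uses only finite bars. This discrepancy is presumably already absorbed in the proof of Theorem~\ref{thm:abpervsbar}, which I am allowed to assume.

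If I wanted an independent argument avoiding the full equality, I would prove directly that $\ph^\epsilon(\varphi)\geq$ the exponential growth rate of the number $N_n$ of finite bars of length at least $\epsilon_0$. Put $p_i=l_i/S$, where the $l_i$ are all finite bar lengths greater than $\epsilon$ and $S$ is their sum. The Shannon entropy dominates the min-entropy:
\[
E\geq-\log\max_i p_i\geq\log\frac{S}{\beta_n}\geq\log\frac{\epsilon_0N_n}{\beta_n}.
\]
Here $\beta_n$ is the boundary depth, which grows at most linearly in $n$ by Hofer-Lipschitz continuity of boundary depth ($\beta(\varphi^n)\leq C n\|H\|$). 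So $\log\beta_n=O(\log n)$ is negligible against $n$, giving $\ph(\varphi)\geq\limsup_n\frac1n\log N_n\geq h_{\mathrm{top}}(\varphi|_K)$. The main obstacle on this route is the linear boundary-depth bound, which I expect to follow from the Hofer estimate; the main route simply quotes the earlier results.
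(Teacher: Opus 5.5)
Your main route is the paper's own argument. It cites the hyperbolic-set lower bound $h_{\mathrm{top}}(\varphi|_K)\le\bh(\varphi)$ from \cite{CGG} (with \cite{Me}) and combines it with the identity $\ph(\varphi)=\bh(\varphi)$ of Theorem~\ref{thm:abpervsbar}, so the proposal is correct; your alternative min-entropy argument is not really independent either, since it is the paper's lower bound~(\ref{e:lowbd}) plus the linear boundary-depth growth of Remark~\ref{rem:lingrowth}, i.e.\ the proof of Theorem~\ref{thm:abpervsbar} itself.
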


\begin{thm}
Let $L_0$ and $L_1$ be two closed Hamiltonian isotopic monotone Lagrangian submanifolds of a tame symplectic manifold $(M,\omega)$ with $N_{L_0}\geq2$. Let $K$ be a locally maximal and topologically transitive hyperbolic invariant set of $\varphi\in\ham(M,\omega)$ with $h_{\mathrm{top}}(\varphi|_K)>0$. Assume that $L_0$ and $L_1$ intersect $K$, that $L_0$ contains a ball centered at some $p\in L_0\cap K$ in the unstable manifold of $K$ at $p$, and that $L_1$ contains a ball centered at some $q\in L_1\cap K$ in the stable manifold of $K$ at $q$. Then
\[
0<h_{\mathrm{top}}(\varphi|_K)\leq\ph(\varphi;L_0,L_1).
\]
\end{thm}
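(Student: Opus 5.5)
The plan is to derive the statement directly from the relative comparison theorem, Theorem~\ref{thm:Lpervsbar}, together with the relative lower bound for barcode entropy due to Meiwes~\cite{Me}. No new Floer-theoretic input should be needed; the content lies in matching hypotheses.

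\emph{Step 1: the relative lower bound.} Meiwes~\cite{Me} treats a locally maximal, topologically transitive hyperbolic invariant set $K$ of $\varphi$ with positive topological entropy. He assumes that $L_0$ contains an unstable ball through $p\in K$ and that $L_1$ contains a stable ball through $q\in K$. Under these hypotheses he shows
\[
\bh(\varphi;L_0,L_1)\geq h_{\mathrm{top}}(\varphi|_K).
\]
I would recall the mechanism briefly. By the inclination ($\lambda$-)lemma, $\varphi^{n}(L_0)$ accumulates $C^1$-closely on the unstable manifolds of $K$ near $q$. Transitivity and the shadowing/specification property on the locally maximal set $K$ then produce at least roughly $e^{n(h_{\mathrm{top}}(\varphi|_K)-\delta)}$ transverse intersections of $\varphi^{n}(L_0)$ with $L_1$; these are essentially hyperbolic chords of $K$. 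A crossing-energy argument, as in~\cite{CGG}, shows that each such chord is the endpoint of a bar of length at least some $\epsilon_0>0$ depending only on $K$ and the local geometry. This count yields the exponential lower bound on $|\mathcal{FB}^{\epsilon}|$ for $\epsilon<\epsilon_0$.

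\emph{Step 2: matching conventions.} The barcode entropy in this paper uses $E^\epsilon(L_0,\varphi^{-n}(L_1))$, i.e.\ the pair $(L_0,\varphi^{-n}(L_1))$. By Hamiltonian invariance of Floer barcodes, this pair has the same barcode as $(\varphi^{n}(L_0),L_1)$. So the stable/unstable roles stated in the theorem match Meiwes' convention, possibly after replacing $\varphi$ by $\varphi^{-1}$, which swaps stable and unstable manifolds. The logarithm base $2$ used here, versus natural logarithms, also needs reconciling. The definition of $h_{\mathrm{top}}$ should be read with the same base, so the inequality is base-independent.

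\emph{Step 3: conclusion.} Apply equation~\eqref{e:perbar} of Theorem~\ref{thm:Lpervsbar}, which gives $\ph(\varphi;L_0,L_1)=\bh(\varphi;L_0,L_1)$. Its hypotheses (closed, monotone, Hamiltonian isotopic, $N_{L_0}\geq2$, tame $M$) are exactly those assumed here. This yields $\ph(\varphi;L_0,L_1)\geq h_{\mathrm{top}}(\varphi|_K)>0$.

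The main obstacle is Step 2. I must check that the hypotheses and normalization of~\cite{Me} coincide with those used here, namely the direction of iteration, the stable versus unstable roles, and the treatment of nontransverse pairs via the $\liminf$ over perturbations. If Meiwes' result is only stated for the barcode-entropy definition of~\cite{CGG}, I would verify that definition agrees with the one used in Theorem~\ref{thm:Lpervsbar}. I expect this to hold, since both take $\limsup_n$ of $\frac1n\log$ of the $\epsilon$-bar count and then let $\epsilon\searrow0$.
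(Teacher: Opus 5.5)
Your proposal is correct and follows the paper's argument: Meiwes' lower bound $\bh(\varphi;L_0,L_1)\geq h_{\mathrm{top}}(\varphi|_K)$ from~\cite{Me}, combined with the equality $\ph(\varphi;L_0,L_1)=\bh(\varphi;L_0,L_1)$ of Theorem~\ref{thm:Lpervsbar}; in fact only the inequality $\ph\geq\bh$ is needed, and it rests on the at most linear growth of the boundary depths $\beta_n$. Your Step~2 convention check closes without replacing $\varphi$ by $\varphi^{-1}$: the map $\varphi^n$ carries the pair $(L_0,\varphi^{-n}(L_1))$ to $(\varphi^n(L_0),L_1)$, and the unstable/stable roles of $L_0$ and $L_1$ then match the intersection mechanism you describe.
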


For slow entropy, the boundary-depth condition in Theorems~\ref{thm:Lpervsbar} and~\ref{thm:abpervsbar} is automatic in several important cases. For instance, Shelukhin~\cite[Thm.~B]{Sh} proved uniform bounds for boundary depth under suitable semisimplicity and toroidal monotonicity assumptions. Consequently, we obtain the following corollary.

\begin{cor}
Assume that $(M^{2n},\omega)$ is a closed simply connected monotone symplectic manifold whose even quantum homology algebra over a ground field $\bK$ is semisimple. Then, for every $\varphi\in\ham(M,\omega)$,
\[
\sph(\varphi)=\sbh(\varphi).
\]
\end{cor}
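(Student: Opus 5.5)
The corollary will follow from the last clause of Theorem~\ref{thm:abpervsbar}: we only need the boundary depths $\beta_n=\beta(\varphi^n)$ to satisfy $\limsup_{n\to\infty}\log\beta_n/\log n=0$. Theorem~\ref{thm:abpervsbar} already gives $\sph(\varphi)\leq\sbh(\varphi)$ unconditionally. Hence it suffices to show that, under the stated hypotheses, $\beta_n$ is bounded uniformly in $n$. A bounded sequence has $\log\beta_n/\log n\to0$; if $\beta_n=0$ for some $n$, we read $\log\beta_n$ as $-\infty$, which only helps the $\limsup$.

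For the uniform bound I will invoke Shelukhin's theorem~\cite[Thm.~B]{Sh}. For a closed monotone symplectic manifold whose even quantum homology over $\bK$ is semisimple, and under the toroidal monotonicity hypothesis, it bounds the boundary depth of every Hamiltonian diffeomorphism by a constant $C=C(M,\omega,\bK)$ independent of $\varphi$. Simple connectivity gives the toroidal monotonicity assumption: every free loop in $M$ is contractible, so the Floer complex lives only in the contractible class. There, spheres and tori in the loop space both reduce to classes in $\pi_2(M)$, so monotonicity on $\pi_2$ is all that is required. Applying the theorem to $\varphi^n\in\ham(M,\omega)$ gives $\beta(\varphi^n)\leq C$ for all $n$.

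Before concluding, I will check two compatibility points. First, the boundary depth $\beta(\varphi^n)$ in Theorem~\ref{thm:abpervsbar} must be the same quantity Shelukhin bounds, namely the length of the longest finite bar of the Hamiltonian Floer barcode $\mathcal B(H^{\sharp n})$. By Remark~\ref{rmk:shift}, finite bar lengths do not depend on the chosen generating Hamiltonian, so $\beta(\varphi^n)$ is well defined. Second, $\varphi^n$ may be degenerate. Here I will use that boundary depth is Hofer-continuous; this is a consequence of barcode stability. So the bound $C$ passes to the limiting procedure used to define $E^\epsilon(\varphi^n)$.

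The main obstacle is this hypothesis-matching step: making sure that "semisimple even quantum homology plus simply connected monotone" is exactly the setting of \cite[Thm.~B]{Sh}, and that his coefficient conventions (Novikov field over $\bK$, grading) agree with those used to define $\mathcal B(H)$ here. Once that is settled, the corollary follows: $\beta_n\leq C$ gives equality in~\eqref{e:abslowper}, that is, $\sph(\varphi)=\sbh(\varphi)$.
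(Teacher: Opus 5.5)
Your proposal is correct and follows the same route as the paper. The paper also applies Shelukhin's uniform bound on boundary depth to verify the boundary-depth hypothesis in Theorem~\ref{thm:abpervsbar}, which upgrades $\sph(\varphi)\leq\sbh(\varphi)$ to equality; there, as in your proposal, simple connectivity leaves only the contractible class, so toroidal monotonicity is automatic.

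One small precision: a bounded sequence $\beta_n$ only gives $\limsup_{n\to\infty}\log\beta_n/\log n\leq 0$, not literally $=0$. This is harmless, since case~(1) in the proof of Theorem~\ref{thm:Lpervsbar} shows that a uniform bound $\beta_n\leq C$ is exactly what the argument uses.
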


In particular, for complex projective space $\C P^n$ with the Fubini--Study form $\omega_{\rm FS}$, one has $\sph(\varphi)=\sbh(\varphi)$ for every $\varphi\in\ham(\C P^n,\omega_{\rm FS})$.

We also obtain positivity results for Reeb flows. Fender--Lee--Sohn~\cite{FLS} proved that the barcode entropy of a Reeb flow is bounded above by topological entropy, namely
\begin{equation}\label{eq:bartop}
 h_{\mathrm{top}}(\theta)\geq\bh(X,\lambda).
\end{equation}
Conversely, compact hyperbolic invariant sets give lower bounds for barcode entropy by work of \c{C}ineli--Ginzburg--G\"{u}rel~\cite{CGG2}. Combining these results with Theorem~\ref{thm:liouville}, we get the following.

\begin{thm}\label{thm:positivity}
Let $(X,\lambda)$ be a Liouville domain with $SH(X,\lambda)=0$. Let $K\subset\partial X$ be a compact hyperbolic invariant set of the Reeb flow $\varphi_\theta^t$. Then
\[
\ph(X,\lambda)\geq h_{\mathrm{top}}(\varphi_\theta^t|_K).
\]
Moreover, when $\dim X=3$,
\[
\ph(X,\lambda)=h_{\mathrm{top}}(\theta).
\]
\end{thm}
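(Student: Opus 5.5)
The argument will be a direct combination of Theorem~\ref{thm:liouville} with known results relating barcode entropy to topological entropy for Reeb flows.

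\textbf{First claim.} Since $SH(X,\lambda)=0$, the equality part of Theorem~\ref{thm:liouville} gives
\[
\ph(X,\lambda)=\bh(X,\lambda).
\]
So it suffices to bound $\bh(X,\lambda)$ from below. For this I will invoke the lower bound of \c{C}ineli--Ginzburg--G\"{u}rel~\cite{CGG2}: for a compact hyperbolic invariant set $K$ of the Reeb flow,
\[
\bh(X,\lambda)\geq h_{\mathrm{top}}(\varphi_\theta^t|_K).
\]
Before using it I need to check that their barcode entropy is the same quantity as Definition~\ref{df:bar_Liou}. Their invariant is the exponential growth rate in $s$ of the number of not-too-short bars in the filtered symplectic homology barcode up to action $s$. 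Definition~\ref{df:bar_Liou} should agree with this, at least as an inequality in the needed direction, once the truncated infinite bars are included. Here $SH=0$ helps: every bar of $\mathcal B(X,\lambda)$ is finite, so no infinite bars enter the counting at all.

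One subtlety is that the \cite{CGG2} bound is usually stated under a hypothesis such as hyperbolicity with respect to the Reeb flow on the contact manifold, and with the filling possibly required to be (index-)admissible. I will check that the hypotheses match the statement here. If the cited result is stated for the absolute barcode entropy of $\partial X$ defined via equivariant or contact homology, I will use the standard comparison that symplectic homology of the filling recovers the same growth of bars. I expect this identification of definitions to be the main obstacle, though it is bookkeeping rather than new mathematics.

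\textbf{Second claim ($\dim X=3$).} The upper bound is immediate from \eqref{eq:bartop} of Fender--Lee--Sohn~\cite{FLS}:
\[
\ph(X,\lambda)=\bh(X,\lambda)\leq h_{\mathrm{top}}(\theta).
\]
For the lower bound I will use that on a closed $3$-manifold $\partial X$ the Reeb flow is a nonsingular $C^\infty$ flow. By Katok's horseshoe theorem for flows in dimension three (Lima--Sarig, or Katok's original argument via a surface of section), $h_{\mathrm{top}}(\theta)$ is approximated by $h_{\mathrm{top}}$ of compact hyperbolic invariant sets $K_j$. This holds when $h_{\mathrm{top}}(\theta)>0$; if $h_{\mathrm{top}}(\theta)=0$ there is nothing to prove.

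Applying the first part to each $K_j$ and taking the supremum over $j$ gives
\[
\ph(X,\lambda)\geq h_{\mathrm{top}}(\theta).
\]
This is the same scheme by which the known result $\bh=h_{\mathrm{top}}$ for Reeb flows in dimension three is proved. If \cite{CGG2} already states $\bh(X,\lambda)=h_{\mathrm{top}}(\theta)$ for $\dim X=4$ (i.e.\ $\dim\partial X=3$), I will simply cite it and combine it with Theorem~\ref{thm:liouville}.

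A further point to settle is the dimension convention. The statement says $\dim X=3$, but a Liouville domain is even-dimensional, so this presumably means $\dim\partial X=3$. I will interpret it that way.
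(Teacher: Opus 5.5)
Your proposal is correct and follows the paper's route. Since $SH(X,\lambda)=0$, Theorem~\ref{thm:liouville} gives $\ph=\bh$; the \c{C}ineli--Ginzburg--G\"{u}rel lower bound then handles hyperbolic sets; and in the three-dimensional case (correctly read as $\dim\partial X=3$) the Fender--Lee--Sohn upper bound~\eqref{eq:bartop}, together with the known equality $\bh=h_{\mathrm{top}}$ (equivalently, your horseshoe-approximation argument), gives equality. The paper does this in a single line of citations, so your extra checks on matching definitions are bookkeeping rather than a gap.
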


In particular, if $W\subset\R^{2n}$ is a star-shaped domain with smooth boundary and the Reeb flow on $\partial W$ has a compact hyperbolic invariant set, then $\ph(W,\lambda)>0$.

Finally, we discuss flexibility. Katok's entropy rigidity theorem~\cite{Ka} for geodesic flows on surfaces states that if $S$ is a closed surface with negative Euler characteristic, then every Riemannian metric $g$ on $S$ satisfies
\[
h_{\mathrm{top}}(g)
\geq
\sqrt{-2\pi\chi(S)/\mathrm{area}_g(S)},
\]
with equality only in constant curvature. This result motivates the collapse and rigidity questions for topological entropy of Reeb flows studied by Abbondandolo--Alves--Sa\u{g}lam--Schlenk~\cite{AASS}. For barcode and persistent entropies we prove the following flexibility statements.

For a compact symplectic manifold $(M^{2n},\omega)$, set
\[
\mathrm{vol}_\omega(M)=\frac{1}{n!\omega_n}\int_M\omega^n,
\]
where $\omega_n$ is the Euclidean volume of the unit ball in $\R^n$.

\begin{thm}\label{thm:collapse}
Let $(Y,\xi)$ be a contact manifold fillable by a Liouville domain $X^{2n}$. For every $\varepsilon>0$, there exists a Liouville form $\lambda$ on $X$ such that $\xi=\ker(\lambda|_Y)$, $\mathrm{vol}_{d\lambda}(X)=1$, and
\[
\ph(X,\lambda)\leq\bh(X,\lambda)\leq\varepsilon.
\]
\end{thm}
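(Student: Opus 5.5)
Since Theorem~\ref{thm:liouville} already gives $\ph(X,\lambda)\leq\bh(X,\lambda)$ for every Liouville form, the statement reduces to a collapse result for barcode entropy: given $\varepsilon>0$, find a Liouville form $\lambda$ on $X$ with $\xi=\ker(\lambda|_Y)$, $\mathrm{vol}_{d\lambda}(X)=1$ and $\bh(X,\lambda)\leq\varepsilon$. By \eqref{eq:bartop}, $\bh(X,\lambda)\leq h_{\mathrm{top}}(\theta)$ with $\theta=\lambda|_Y$. So it suffices to find a contact form $\theta$ for $\xi$ whose Reeb flow has topological entropy at most $\varepsilon$, and which is the restriction of a Liouville form on $X$ of total volume $1$.

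For the entropy bound I would invoke the collapse theorem of Abbondandolo--Alves--Sa\u{g}lam--Schlenk~\cite{AASS}. It says that on any closed contact manifold $(Y,\xi)$, and for any $\varepsilon'>0$, there is a contact form $\theta'$ with $\ker\theta'=\xi$, contact volume $\int_Y\theta'\wedge(d\theta')^{n-1}=1$, and $h_{\mathrm{top}}(\theta')<\varepsilon'$. The construction localizes the complicated dynamics in a small region while keeping the volume normalized.

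The next step is to realize $\theta'$ as the boundary of a Liouville filling by $X$ and to fix the volume.
\begin{itemize}
\item Write $\theta'=f\theta_0$, where $\theta_0=\lambda_0|_Y$ for the given filling $(X,\lambda_0)$ and $f>0$.
\item Inside the completion, the graph $\{f\}\times Y$ in the symplectization end bounds a Liouville domain $X_f$. It is diffeomorphic to $X$ via the Liouville flow, and its boundary form is $\theta'$.
\item Pulling back gives a Liouville form $\lambda_1$ on $X$ with $\lambda_1|_Y=\theta'$, up to the diffeomorphism and a conformal identification of $\ker$.
\end{itemize}
By Stokes, $\int_X(d\lambda_1)^n=\int_Y\lambda_1\wedge(d\lambda_1)^{n-1}$, so the symplectic volume of $X$ equals the contact volume of $\partial X$ up to the normalizing constants. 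Hence $\mathrm{vol}_{d\lambda_1}(X)=c_n\cdot\mathrm{vol}(Y,\theta')$ for an explicit dimensional constant $c_n$.

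Finally, rescale $\lambda=c\lambda_1$. This multiplies the symplectic volume by $c^n$, so I choose $c$ with $c^n\,\mathrm{vol}_{d\lambda_1}(X)=1$. It also multiplies the contact form by $c$, which reparametrizes the Reeb flow by speed $1/c$ and scales topological entropy by $1/c$. Choosing $\varepsilon'=c\varepsilon$ in the \cite{AASS} step then gives $h_{\mathrm{top}}(\lambda|_Y)\leq\varepsilon$. There is no circularity, because $c$ depends only on the normalized contact volume and the constant $c_n$, not on $\varepsilon'$.

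Combining everything, $\ph(X,\lambda)\leq\bh(X,\lambda)\leq h_{\mathrm{top}}(\lambda|_Y)\leq\varepsilon$.

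The main obstacle is making sure the filling issue is harmless: that $X_f$ is diffeomorphic to $X$ and carries a Liouville form restricting exactly to $\theta'$. This is standard for star-shaped hypersurfaces in the completion. I would also check that the hypotheses under which \eqref{eq:bartop} was proved (nondegeneracy or a limiting procedure) apply to the form produced by~\cite{AASS}, perturbing to a nondegenerate form if necessary and using continuity of $\bh$ from above.
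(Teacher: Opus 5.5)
Your proposal is correct and follows the paper's route: take the Abbondandolo--Alves--Sa\u{g}lam--Schlenk contact form, extend it to a Liouville form on $X$, use Stokes to identify $\mathrm{vol}_{d\lambda}(X)$ with the contact volume, and conclude from $\ph\leq\bh\leq h_{\mathrm{top}}(\theta)$.

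The differences are cosmetic. The paper builds the filling explicitly, interpolating $\lambda=e^t\bigl(c+\beta(t)(f-c)\bigr)\alpha$ on a collar with $c\lambda_0$ inside. This is the same domain as your graph of $f$ in $(0,\infty)\times Y\subset\widehat X$; note that this graph need not lie in the end $[1,\infty)\times Y$ when $f<1$. Also, with the paper's volume normalization Stokes gives $c_n=1$, so your final rescaling is harmless but unnecessary.
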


The next theorem is an immediate consequence of Theorem~A in~\cite{EK}; see also~\cite[Thm.~1.9]{AASS} for a related higher-dimensional result on topological entropy of Reeb flows.

\begin{thm}\label{thm:flexibility}
Let $S$ be a closed orientable surface of genus $k\geq2$. Then for every $c>2\sqrt{\pi(k-1)}$ 
there exists a Riemannian metric $g$ on $S$ such that the canonical Liouville form $\lambda$ on the disk cotangent bundle $D_g^*S$ satisfies $\mathrm{vol}_{d\lambda}(D_g^*S)=1$ and
\[
\ph(D_g^*S,\lambda)=\bh(D_g^*S,\lambda)=c.
\]
\end{thm}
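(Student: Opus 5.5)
The plan is to reduce the statement to the result of Theorem~A in~\cite{EK} about topological entropy of geodesic flows, and then to pass from topological entropy to barcode and persistent entropy. Theorem~A in~\cite{EK} says that on a closed orientable surface $S$ of genus $k\geq 2$, every value $c$ strictly above the Katok bound, suitably normalized, is realized as the topological entropy of the geodesic flow of some metric. Here the normalization is by area, with $\mathrm{area}_g(S)$ determined so that the symplectic volume of $D_g^*S$ equals $1$. Concretely, $\int_{D^*_gS}(d\lambda)^2/2 = \pi\,\mathrm{area}_g(S)$, and with the normalization of $\mathrm{vol}_\omega$ given before Theorem~\ref{thm:collapse} this fixes the area. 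Then Katok's bound $\sqrt{-2\pi\chi(S)/\mathrm{area}}=\sqrt{4\pi(k-1)/\mathrm{area}}$ becomes the threshold $2\sqrt{\pi(k-1)}$ appearing in the statement. The first step is to check this constant bookkeeping carefully, rescaling $g$ by a constant if necessary, since $h_{\mathrm{top}}$ scales like $\mathrm{area}^{-1/2}$ while the volume scales linearly. This yields a metric $g$ with $\mathrm{vol}_{d\lambda}(D_g^*S)=1$ and $h_{\mathrm{top}}(g)=c$.

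Next I will identify the Reeb flow of $\lambda|_{S^*_gS}$ with the geodesic flow on the unit cotangent bundle, so that $h_{\mathrm{top}}(\theta)=h_{\mathrm{top}}(g)=c$. Since $\dim D_g^*S=4$, I then want to use the three-dimensional equality $\bh(X,\lambda)=h_{\mathrm{top}}(\theta)$. The inequality $\bh\leq h_{\mathrm{top}}$ is \eqref{eq:bartop} from~\cite{FLS}. For the reverse inequality in dimension three, I will invoke the lower bound from hyperbolic sets~\cite{CGG2}. By Katok's horseshoe theorem for $3$-dimensional flows, or the corresponding result already used for Theorem~\ref{thm:positivity}, there are hyperbolic invariant sets with entropy arbitrarily close to $h_{\mathrm{top}}$. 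Together these give $\bh(D_g^*S,\lambda)=c$.

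The remaining step, and the main obstacle, is $\ph=\bh$. Here $SH(D^*S)\cong H_*(\Lambda S)\neq0$, so the equality clause of Theorem~\ref{thm:liouville} does not apply directly. Instead I will use the subexponential length criterion, Proposition~\ref{prop:subexp-length}, in the form the paper announces for cotangent bundles. To apply it, I will bound the lengths of the bars counted in the window $[0,s]$. Truncated infinite bars have length at most $s$. For finite bars, the relevant filtered symplectic homology can be identified with filtered loop space homology under the energy (length) filtration. The finite bars of the length filtration should then have length bounded linearly in $s$, and at the very least subexponentially, because any class of $H_*(\Lambda^{\le s}S)$ that dies in $\Lambda S$ already dies below level $Cs$. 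If the metric from~\cite{EK} has negative curvature this is immediate, since there are no contractible closed geodesics and each free homotopy class contains a unique geodesic. In general I would first try to arrange this through Morse--Bott nondegeneracy or a perturbation argument, falling back on the cotangent-bundle computation the paper promises for negatively curved metrics. Once the bound holds, Proposition~\ref{prop:subexp-length} gives $\ph(D_g^*S,\lambda)=\bh(D_g^*S,\lambda)=c$.
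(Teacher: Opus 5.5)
Your proof is correct once you commit to what you leave conditional. The metrics produced by \cite[Thm.~A]{EK} are negatively curved; this is the setting of Erchenko--Katok, and the paper uses exactly this. Given that, Lemma~\ref{lem:sympbar} shows that the barcode of $SH^s(D_g^*S,\lambda)$ consists only of infinite bars. So every bar of $\mathcal{FB}^\epsilon_s$ has length at most $s$, and Proposition~\ref{prop:subexp-length} gives $\ph=\bh$, which is precisely the paper's step. Your fallback for general metrics, that a loop-space class born below level $s$ dies below level $Cs$, is an unproven filling-type estimate. Drop it; it is not needed. Your volume bookkeeping is right: $\mathrm{vol}_{d\lambda}(D^*_gS)=\mathrm{area}_g(S)$, so the Katok threshold is $2\sqrt{\pi(k-1)}$.

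The genuine difference from the paper is how you obtain $\bh=c$. You invoke the three-dimensional theorem $\bh=h_{\mathrm{top}}$: the upper bound from \cite{FLS}, plus the hyperbolic-set lower bound of \cite{CGG2} with horseshoe approximation (or directly \cite[Thm.~C]{GGM}). The paper instead reads off $|\mathcal{B}^\epsilon_s|=b(S)+2\#\mathcal{P}^s_g$ from the explicit barcode and applies Margulis' asymptotic count of closed geodesics \cite{Ma}; see Proposition~\ref{prop:negcurv-cotangent}.

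Your route would not need negative curvature for this step. That generality is not realized, though, because your $\ph=\bh$ step still needs it. Once negative curvature is available, even your step can skip horseshoes: the geodesic flow is Anosov, so the whole unit cotangent bundle is a compact hyperbolic invariant set, and \cite{CGG2} gives $\bh\geq h_{\mathrm{top}}$ directly.

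The paper's counting argument is elementary once the barcode is known and works in every dimension, which is what underlies Conjecture~\ref{conj:high-dimensional-flexibility}. Yours rests on deeper dynamical input that is specific to dimension three in the general case.
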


Theorem~\ref{thm:flexibility} realizes all values strictly larger than Katok's constant $2\sqrt{\pi(k-1)}$ . The endpoint itself is not mysterious for barcode entropy: using the barcode entropy of geodesic-flow theorem of Ginzburg--G\"{u}rel--Mazzucchelli~\cite[Thm.C]{GGM}, together with Katok's entropy rigidity theorem~\cite{Ka}, one obtains the following endpoint statement.

\begin{cor}\label{cor:surface-endpoint}
Let $S$ be a closed orientable surface of genus $k\geq2$, and let $g$ be a Riemannian metric on $S$ such that
$\mathrm{vol}_{d\lambda}(D_g^*S)=1$.
Then
\[
\bh(D_g^*S,\lambda_{\mathrm{can}})\geq 2\sqrt{\pi(k-1)}.
\]
Moreover, equality holds if and only if $g$ has constant negative curvature. For such a constant curvature metric,
$\ph(D_g^*S\lambda_{\mathrm{can}})=\bh(D_g^*S,\lambda_{\mathrm{can}})=2\sqrt{\pi(k-1)}.$

\end{cor}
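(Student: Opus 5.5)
The argument has three steps: the lower bound from Ginzburg--G\"{u}rel--Mazzucchelli combined with Katok, the rigidity case, and the equality with persistent entropy in constant curvature.

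\textbf{Lower bound.} By \cite[Thm.~C]{GGM}, for a closed Riemannian manifold the barcode entropy of the geodesic flow is bounded below by the exponential growth rate of periodic geodesics, or of the relevant loop-space homology. For negatively curved metrics it equals the topological entropy. On a surface of genus $k\geq2$ this lower bound dominates the volume entropy, since the loop space homology grows like the number of conjugacy classes of $\pi_1(S)$. So I would first record
\[
\bh(D_g^*S,\lambda_{\mathrm{can}})\geq h_{\mathrm{vol}}(g).
\]
Next I would apply the Katok-type inequality $h_{\mathrm{vol}}(g)\geq\sqrt{-2\pi\chi(S)/\mathrm{area}_g(S)}$. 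This inequality holds for volume entropy for all metrics (Katok proved it for both topological and volume entropy in the conformal-class argument). It remains to translate the normalization. With $\omega_2=\pi$ we have $\mathrm{vol}_{d\lambda}(D_g^*S)=\frac{1}{2\pi}\int\omega^2/\ldots$, which equals $\mathrm{area}_g(S)$ up to a fixed constant, and the condition $\mathrm{vol}=1$ corresponds to $\mathrm{area}_g(S)=\pi$ after checking constants. Then $-2\pi\chi(S)/\pi=4(k-1)$ gives the bound $2\sqrt{k-1}$, not $2\sqrt{\pi(k-1)}$. I would therefore fix the normalization so that it matches the constant in Theorem~\ref{thm:flexibility}, namely $\mathrm{area}_g=1/\ldots$. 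This constant bookkeeping needs care but is routine.

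\textbf{Rigidity.} If $\bh$ equals the constant, the chain of inequalities forces $h_{\mathrm{vol}}(g)$ to equal Katok's bound. Katok's rigidity statement for volume entropy (equality only for constant curvature, which holds on surfaces) then gives that $g$ has constant negative curvature. Conversely, for constant curvature $-\kappa$ the geodesic flow is Anosov, so $\bh=h_{\mathrm{top}}$ by \cite{GGM}. Moreover $h_{\mathrm{top}}=\sqrt{\kappa}$, which equals the Katok value under the area normalization by Gauss--Bonnet.

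\textbf{Persistent entropy.} The equality $\ph=\bh$ in constant curvature would follow from the criterion of Proposition~\ref{prop:subexp-length}. For this I would show that the lengths of the bars counted in the action window $s$ grow subexponentially in $s$. Alternatively I would invoke the cotangent-bundle computation for negatively curved manifolds announced in the introduction, which yields $\ph=h_{\mathrm{top}}=\bh$. \textbf{I expect this step to be the main obstacle.} Finite bars in $SH(D^*S)\cong H_*(\Lambda S)$ have lengths bounded by differences of lengths of geodesics in the same free homotopy class. In negative curvature each nontrivial class contains exactly one closed geodesic, which is nondegenerate. The Morse--Bott/Morse complex then has very short finite bars (essentially none after perturbation) and the infinite bars are truncated at $s$, so bar lengths are at most $s$, which is linear and hence subexponential. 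Making this precise through the perturbation used to define the barcode is the delicate point.
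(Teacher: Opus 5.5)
Your third step matches the paper: Proposition~\ref{prop:negcurv-cotangent} gives $\ph=\bh$ in constant curvature. The perturbation issue you flag does not arise, because Lemma~\ref{lem:sympbar} computes the barcode of the persistence module directly from the sublevel sets of $\sqrt{E}$; all bars are infinite, so truncated lengths are at most $s$.

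The first genuine problem is the normalization. Your computation is wrong, and the remedy you propose (choosing the normalization to fit the constant) would change the hypothesis. Since $\omega^2/2$ is the Liouville volume form, and the Liouville volume of $D^*_gS$ is $\pi\,\mathrm{area}_g(S)$ (each co-disk contributes $\pi$ times the Riemannian area element), one gets $\int_{D^*_gS}\omega^2=2\pi\,\mathrm{area}_g(S)$. Dividing by $2!\,\omega_2=2\pi$ gives $\mathrm{vol}_{d\lambda}(D^*_gS)=\mathrm{area}_g(S)$ exactly. So the hypothesis is simply $\mathrm{area}_g(S)=1$. Katok's bound is then $\sqrt{-2\pi\chi(S)}=\sqrt{4\pi(k-1)}=2\sqrt{\pi(k-1)}$. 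In constant curvature, Gauss--Bonnet gives $\kappa=4\pi(k-1)$, so $h_{\mathrm{top}}=\sqrt{\kappa}$ matches with no adjustment.

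The second problem is the lower bound. You rest it on $\bh\geq h_{\mathrm{vol}}(g)$ for every metric, attribute this to \cite[Thm.~C]{GGM}, and do not prove it. As the paper uses it, that theorem says $\bh=h_{\mathrm{top}}(\varphi_g)$ for \emph{every} metric on a closed surface, not only negatively curved ones. With it, Katok's theorem for $h_{\mathrm{top}}$ (inequality and equality case) gives both the bound and the rigidity at once; that is the paper's entire proof.

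Your volume-entropy detour can be made to work, but it needs two facts you only gesture at:
\begin{itemize}
\item[(a)] The number of infinite bars born below $s$ is at least the number of free homotopy classes containing a loop of $g$-length $<s$. This is the rank of $H_0(\Lambda^{<s}S)\to H_0(\Lambda S)$, transported through the filtered isomorphism used in Lemma~\ref{lem:sympbar}.
\item[(b)] The exponential growth rate of this class count, measured by $g$-length, is at least $h_{\mathrm{vol}}(g)$.
\end{itemize}
Fact (b) is a genuine counting theorem for the hyperbolic group $\pi_1(S)$ (e.g.\ Coornaert--Knieper's conjugacy-growth estimates). It does not follow from ``loop space homology grows like the number of conjugacy classes.'' Alternatively, you could bypass $h_{\mathrm{vol}}$ by adapting Katok's argument in \cite{Ka}, which compares lengths of closed geodesics in free homotopy classes, to bound the count in (a) directly.

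Done either way, your route is a legitimate alternative. It obtains the inequality and rigidity from infinite bars alone and avoids \cite[Thm.~C]{GGM}, with Proposition~\ref{prop:negcurv-cotangent} supplying the converse direction. As written, however, the key inequality $\bh\geq h_{\mathrm{vol}}(g)$ is unsupported.
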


This leaves a sharper question which is genuinely about the length distribution measured by persistent entropy, rather than about bar-count growth.

\begin{qtn}\label{qtn:persistent-endpoint-rigidity}
Let $S$ be a closed orientable surface of genus $k\geq2$, and let $g$ be a Riemannian metric on $S$ with
\[
\mathrm{vol}_{d\lambda}(D_g^*S)=1.
\]
Is it true that
\[
\ph(D_g^*S,\lambda_{\mathrm{can}})\geq 2\sqrt{\pi(k-1)}?
\]
If equality holds, must $g$ have constant negative curvature?
\end{qtn}

There is also a natural higher-dimensional analogue of
Theorem~\ref{thm:flexibility}. By Proposition~\ref{prop:negcurv-cotangent},
for a closed negatively curved Riemannian manifold the persistent and
barcode entropies of the cotangent disk bundle agree with the
topological entropy of the geodesic flow. since negatively curved metrics have no conjugate points, the topological entropy of the geodesic flow agrees with the volume entropy by Manning's theorem~\cite{Ma2}; see also Freire--Ma\~{n}\'{e}~\cite{FM}. Thus the endpoint in the higher-dimensional problem is governed
by the minimal entropy theorem of Besson--Courtois--Gallot~\cite{BCG}.
The remaining issue is a Riemannian entropy flexibility problem above
this BCG threshold.

\begin{conj}[Flexibility above the Besson--Courtois--Gallot threshold]\label{conj:high-dimensional-flexibility}
Let \(M^n\) be a closed manifold admitting a negatively curved locally symmetric metric \(g_0\). Define
\[
C(M)
:=
h_{\mathrm{top}}(\varphi_{g_0})\,
\mathrm{vol}_{d\lambda}(D_{g_0}^*M)^{1/n}.
\]
Then, for every
\[
c>C(M),
\]
there exists a negatively curved Riemannian metric \(g\) on \(M\) such that
\[
\mathrm{vol}_{d\lambda}(D_g^*M)=1
\]
and
\[
\ph(D_g^*M,\lambda_{\mathrm{can}})
=
\bh(D_g^*M,\lambda_{\mathrm{can}})
=
c.
\]
\end{conj}

\section*{Acknowledgements} I would like to thank Jinxin Xue for calling my attention to the papers~\cite{AGR,AGS} and for his grant support NSFC 12271285 and the New Cornerstone investigator program. I am very grateful to Matthias Meiwes for carefully reading the first draft of this manuscript and providing valuable comments. I thank Dylan Cant, Erman Çineli and Viktor L. Ginzburg for illuminating discussions. I thank Wentian Kuang, the organizer of the 2026 GBU Symposium on Celestial Mechanics and Hamiltonian Systems in Dongguan (June 2026) for giving me an opportunity to
present a preliminary version of this work.

\section{Preliminaries}\label{sec:pre}

\subsection{Some conventions}\label{subsec:notation}
Throughout this paper, we assume that $(M,\omega)$ is a connected and tame symplectic
manifold, e.g., closed symplectic manifolds, open manifolds
that are symplectically convex at infinity;  see~\cite{ALP}. We denote
by $\cJ$ the space of $\omega$-compatible almost complex structures such that
$(M,g_{J})$ is geometrically bounded, where for $J\in\cJ$, $g_{J}
(\cdot,\cdot)=\omega(\cdot,J\cdot)$ is the associated Riemannian metric. Let $\cH= C_c^\infty([0,1]\times M,\R)$. For $H\in\cH$,  we denote by
$\{\varphi_H^t\}_{t\in[0,1]}$ the Hamiltonian isotopy of $H$ obtained by
integrating the time-dependent vector field $X_{H_t}$, where $H_t=H(t,\cdot)$, and
$X_{H_t}$ is determined uniquely by $-dH_t=\omega(X_{H_t},\cdot)$. Let $\ham(M,\omega)$ denote the group of all Hamiltonian diffeomorphisms $\varphi$ generated by elements $H$ of $\cH$, i.e.,  $\varphi=\varphi^1_H$. Note that the Hamiltonian $\overline{H}=-H_t\circ\varphi_H^t$ generates $\varphi^{-1}$. We say that $\varphi\in\ham(M,\omega)$ is \textit{nondegenerate} if for each $x\in {\rm Fix}(\varphi)$ the linearized map $d\varphi_x:T_xM\to T_xM$ has all eigenvalues distinct from $1$.

For any $H\in\cH$, we define
\[
\|H\|=\int^1_0(\max_M H_t-\min_M H_t) dt.
\]
The \textit{Hofer norm} of $\varphi\in\ham(M,\omega)$ is defined by
\[
\|\varphi\|_H=\inf_{G\in\cH} \{\|G\||\;\varphi=\varphi^1_G\};
\]
see~\cite{Ho,HZ,Po}. This induces the $\ham$-bi-invariant \textit{Hofer distance} 
\[
d_H(\varphi,\psi):=\|\varphi^{-1}\circ\psi\|_H\quad 
\]
for any $\varphi,\psi\in\ham(M,\omega)$. The Hofer distance between two Hamiltonian isotopic
Lagrangian submanifolds $L_1$ and $L_2$ is defined as
\[
\delta_H(L_1,L_2)=\inf\big\{\|\varphi\|_H\big|\;\varphi(L_1)=L_2,\;\varphi\in \ham(M,\omega)\big\};
\]
see~\cite{Ch,KS}.

We say that a Lagrangian submanifold $L\subseteq M$ is \textit{monotone} if the two homomorphisms
$$\omega:\pi_2(M,L)\to\R,\quad \mu: \pi_2(M,L)\to\Z,$$
given by pairing with the symplectic form $\omega$ and the Maslov class $\mu\in H^2(M,L;\Z)$ satisfy
$$\omega=\kappa_L\mu$$
for some positive constant $\kappa_L$.  In this case it is known that
$(M,\omega)$ is spherically \textit{monotone}, which means that
$$\omega(A)=2\kappa_Lc_1(A),\quad \forall A\in\pi_2(M),$$
where $c_1=c_1(TM,\omega)$ is the first Chern class of $M$. 

The \textit{minimal Maslov number} of $L$ is defined as the integer
$$N_L=\min\big\{\mu(\beta)\;|\;\beta\in C^0(S^1\times[0,1],M)\;\hbox{with}\;\beta(S^1\times\{0,1\})\subset L,\;\mu(\beta)>0\big\}.$$ 
Throughout this paper, we assume that all $L$ are connected and closed monotone submanifolds with $N_L\geq 2$.  

\subsection{The Lagrangian Floer complex}\label{subsec:lfc}
In this subsection we take a Lagrangian submanifold $L_0$ as in Section~\ref{subsec:notation}, and let $L_1=\varphi_H^{-1}(L_0)$ for some fixed  $H\in\cH$. Assume that $L_0$ and $L_1$ intersect transversely (hence the number of the intersection points is finite).

Consider the space of smooth chords relative to $L_0$ as follows:
$$\cP(L_0,L_0)=\big\{x\in C^\infty([0,1], M)|\;x(0), x(1)\in L_0\big\}.$$
Clearly, the space $\cP(L_0,L_0)$ can be decomposed as follows:
\[
\cP(L_0,L_0)=\bigcup_{\eta\in \pi_0(\cP(L_0,L_0))}\cP_\eta(L_0,L_0)
\] 
where the component of a constant path in $L_0$ is denoted by $pt$. 

Given $\eta\in\pi_0(\cP(L_0,L_0))$, we choose a reference path $\gamma_\eta:[0,1]\to M$ representing the class $\eta$, and fix a symplectic trivialization $\mathfrak{t}_\eta:\gamma_\eta^*TM\to [0,1]\times\R^{2n}$ which maps the tangent space $T_{\gamma_\eta(i)}L_0$ to $ \{i\}\times\R^n\times\{0\}, i=0,1$. For each $\gamma\in \cP(L_0,L_0)$, we consider the pair $(\gamma,v)$, where $v$ is a \textit{capping} of $\gamma$, i.e., a piecewise smooth map $v:[0,1]\times [0,1]\to M$ satisfies the following conditions: $v(s,i)\in L_0$ for $i=0,1$, $v(0,t)=\gamma_\eta(t)$, and $v(1,t)=\gamma(t)$. Let us choose a trivialization $\mathfrak{t}_v$ of the pullback bundle $v^*T M$ such that the restriction of $\mathfrak{t}_v$ over $\gamma_\eta$ is the fixed trivialization $\mathfrak{t}_\eta$ as above, and that $\mathfrak{t}_v$ maps $T_{v(s,i)}L_0$ to $\R^n\times\{0\}$. Using such trivialization, to the capped path $(\gamma,v)$ one can assign a well-defined Robbin-Salamon-Maslov index $\mu_M(\gamma,v)\in \Z$;  see for instance~\cite{Us2,Go}. 

Two cappings $(\gamma_1,v_1)$ and $(\gamma_2,v_2)$ are said to be \textit{equivalent} if $\gamma_1=\gamma_2$ and the cylinder $C$ obtained by gluing $v_1$ to $v_2$ with the reversed orientation has zero $\omega$-area.

Denote by $\widetilde{\cP}_\eta(L_0,L_0)$ the cover of $\cP_\eta(L_0,L_0)$ consisting of all equivalence classes $[\gamma,v]$ of the pairs $(\gamma,v)$. For each path component $\eta$ of $\cP(L_0,L_0)$, we define the subgroup $\Gamma_\eta$ of $\R$ comprising the symplectic areas $\omega(v)$ of elements $v\in \pi_1(\cP_\eta(L_0,L_0))$ viewed as maps $v:S^1\times [0,1]\to M$ such that $v(S^1\times\{0,1\})\subset L_0$ with $v(s,\cdot)$ in $\eta$ for all $s\in 
S^1$. Let $\Gamma\subset\R$ be the \textit{complete period group} generated by the union of all $\Gamma_\eta$. 

The  \textit{Hamiltonian action} of $[\gamma,v]\in \widetilde{\cP}_\eta(L_0,L_0)$ is given by
$$\ahl([\gamma,v])=\int^1_0H(t,\gamma(t))dt-\int_{[0,1]\times [0,1]} v^*\omega.$$
We denote by $\cO_\eta(L_0,H)$ the set of all Hamiltonian chords $\gamma\in\eta$ of $H$ from $L_0$ to $L_0$, meaning that $\dot{\gamma}(t)=X_{H_t}(\gamma(t))$ for all $t\in [0,1]$, and $\gamma(0),\gamma(1)\in L_0$. It is easy to see that its cover $\widetilde{\cO}_\eta(L_0,H)$ is precisely the set consisting of the critical points of $\ahl$ on $\widetilde{\cP}_\eta(L_0,L_0)$.  Note that there is a bijection between the intersections $L_0\cap L_1$ and $\cO(L_0,H)=\bigsqcup_\eta\cO_\eta(L_0,H)$ by sending $x$ to the Hamiltonian chord $t\mapsto\varphi_H^t(x)$. 

Fix a ground field $\bK$. Throughout this paper we assume that $\bK=\Z_2$. 
In our setup for Floer theory, we will use the  Novikov field $\Lambda$ over $\bK$  given by 
\begin{equation}\label{e:Lambda}
    \Lambda=\bigg\{\sum_{i=1}^{\infty}a_iT^{\lambda_i}\;\big|\;a_i\in\bK, \;\lambda_i\in\Gamma,  \;\forall C>0, \;\#\{i\in \N|a_i\neq 0, \lambda_i<C\}<\infty \bigg\}.
\end{equation}
The \textit{valuation map} $\nu:\Lambda\to \R$ is defined as
    \[
    \nu\bigg(\sum_{i=1}^{\infty}a_iT^{\lambda_i}\bigg)=\min\{\lambda_i\;|\;a_i\neq 0\}
    \]
where we set $\nu(0)=+\infty$.

 For each $\eta\in\pi_0(\cP(L_0,L_0))$, we consider the free finite-rank complex 
 $CF_{\eta}(L_0,H)$ over $\Lambda$  freely generated by Hamiltonian chords $\gamma_i\in\cO_\eta(L_0,H)$ with fixed cappings. 
Unlike usual Floer theory, which only considers contractible Hamiltonian chords, here we need to take into account Hamiltonian chords from all free homotopy classes (we refer the reader to~\cite{CGG} for the reason of this consideration). 
In the following, we denote by $x_i$ those generators with fixed cappings in $ CF_{\eta}(L_0,H)$. Hence, 
\[
CF_{\eta}(L_0,H)=\bigoplus_i\Lambda x_i. 
\]
 
 Fix a family $\{J_t\}_{t\in[0,1]}$ of almost complex structures  with $J_t\in\cJ$ for each $t\in[0,1]$ (whenever $M$ is noncompact we also ask $J_t$ to be independent of $t$ outside of some compact codimension $0$ submanifold of $M$). 

Given $x=[\gamma_-,v_-], y=[\gamma_+,v_+]\in \widetilde{\cO}_\eta(L_0,H)$, consider the moduli space $\widehat{\cM}(x,y;H)$ of solutions $u:\R\times[0,1]\to M$ of the Floer  equation
\[\partial_s u+J_t(\partial_t u-X_{H_t}(u))=0\]
with the boundary condition $u(\R\times\{0,1\})\subset L_0$ and $u(\pm\infty,\cdot)=\gamma_\pm(\cdot)$. Denote by $\cM(x,y;H)$ the $\R$-quotient space of $\widehat{\cM}(x,y;H)$. When the moduli spaces $\widehat{\cM}(x,y;H)$ with $\mu_M(x)-\mu_M(y)=1$ are transversely cut, we then define the differential $$d_{H,J}:CF_{\eta}(L_0,H)\to CF_{\eta}(L_0,H)$$ by extending by linearity
from
$$d_{H,J}x_i=\sum_{j} \sum_v f_v T^{\omega(v)}x_j,$$
where the second sum is taken over all recappings $v$ of $x_j$ satisfying that $\mu_M(x_i)-\mu_M(x_j\#v)=1$, $f_v$ is the parity of the number of holomorphic strips  $u\in \widehat{\cM}(x_i,x_j\#v;H)$, and  the fixed capping of $x_i$ is equivalent to the capping obtained by attaching $x_j\# v$ to these strips. By a standard Gromov-Floer compactness argument, one gets $d_{H,J}\circ d_{H,J}=0$. This makes 
$(CF(L_0,H)=\oplus_{\eta} CF_{\eta}(L_0,H),d_{H,J})$ into a ungraded (or $\Z_2$-graded) complex over $\Lambda$ which restricts to a ungraded complex $CF_{\eta}(L_0,H)$ for any $\eta\in\pi_0(\cP(L_0,L_0))$.

For each $\eta$, there is a natural \textit{filtration} $\mathcal{A}$ on $CH_{\eta}(L_0,H)$ given by
\begin{equation}\label{e:filt}
\mathcal{A}\big(\sum_i\lambda_ix_i\big)=\max_i\ahl(\lambda_ix_i):=\max_i\big(\ahl(x_i)-\nu(\lambda_i)\big)
\end{equation}
where $\lambda_i\in \Lambda$ and $x_i\in \widetilde{\cO}_\eta(L_0,H)$ are generators of the complex. 

\begin{rmk}
The filtration $\mathcal{A}$ on the complex $CH_{\eta}(L_0,H)$ depends on the choice of the reference path $\gamma_\eta$. To be more specific, let $CH_\eta(L_0,H)$ be the component representing $\eta\in \pi_0(\cP(L_0,L_0))$, then  changing the reference path $\gamma_\eta$ shifts of the filtration on this component by a constant. However, this ambiguity does not affect some properties of the barcode of $CH_\eta(L_0,H)$, for instance, the bar length and the number of bars of a given length do not depend on the choice of reference paths. For more detailed discussion about this point, we refer to~\cite[Sect.3.3.2]{CGG}. 
\end{rmk}

The homology of the complex $(CH(L_0,H),d_{H,J})$ is called the \textit{Lagrangian Floer homology} of $(L_0,H)$, and is denoted by $HF(L_0,H)$ in which we suppress $J$ because it is indeed independent of $J$. Clearly, the full homology $HF(L_0,H)$ can split into the sum
\[
    HF(L_0,H) =\bigoplus_{\eta\in\pi_0(\cP(L_0,L_0))} HF_{\eta}(L_0,H).
\]
It can be shown that the homology is invariant with respect to Hamiltonians $H$. Namely, for any $G\in\cH$, we have
\[
    HF(L_0,H)\cong HF(L_0,G). 
\]
For this reason, we also use $HF(L_0,L_0)$ to denote this group to indicate that it is independent of the choice of Hamiltonians. 

Note that $HF_{\eta}(L_0,L_0)=0$ for $\eta\neq pt$ because if one chooses a $C^1$-small Hamiltonian $H\in\cH$ then all generators $[\gamma,v]$ of $CF(L_0,H)$ have $\gamma\in pt$, see~\cite[Prop.~6.2]{Us2}. Clearly, 
the full homology $HF(L_0,L_0)$ is a finite-dimensional vector space over $\Lambda$. This point will be crucial for us to compare the persistent entropy $\ph$  introduced in this paper with the barcode entropy $\bh$ defined by Çineli-Ginzburg-G\"{u}rel~\cite{CGG}.  


The filtration $\mathcal{A}$ on $CH_{\eta}(L_0,H)$ given as in $(\ref{e:filt})$ induces a non-Archimedean filtration on the Floer homology $HF_{\eta}(L_0,H)$. More precisely, for any $a\in\R$, let $CH_{\eta}^{<a}(L_0,H)$ denote the subcomplex of $CH_{\eta}(L_0,H)$ consisting of the chains $\sum_i\lambda_i x_i$ with action less than $a$, then we get a family of homologies $HF_{\eta}^{<a}(L_0,H)$ of these subcomplexes, parametrized by $a\in\R$. Moreover, we have the natural maps  
\begin{equation}\label{e:struc}
\ell_{a,b}:HF_{\eta}^{<a}(L_0,H)\longrightarrow HF_{\eta}^{<b}(L_0,H)\end{equation} 
induced by the inclusion $CH_{\eta}^{<a}(L_0,H)\hookrightarrow CH_{\eta}^{<b}(L_0,H)$ for $-\infty<a<b\leq\infty$. 


\subsection{Persistence modules and barcodes}\label{sec:persist}
In this subsection we give a quick overview of some basic facts about persistence modules and barcodes. There are various definitions of persistence modules in the literature. In the following we will use the framework which is close to the one in~\cite[Sect.4.1]{BG} (see also~\cite{BV}) for our purposes.
For more details about persistence modules we refer to the books~\cite{CCGGO,PRSZ} and the references therein.

\begin{df}\label{df:pmod}
A \textit{persistence module} with \textit{spectrum} $\mathcal{S}\subset\R$ over a field $\bK$ is a functor $(V,\pi)$ from the ordered set $(\R\setminus\mathcal{S},\leq)$ to the category of vector spaces over $\bK$, which consists of 
\begin{itemize}
    \item $\bK$-vector spaces $V_s$ for all $s\in\R$,
    \item linear maps $\pi_{s,s'}:V_s\to V_{s'}$ for each two elements $s,s'\in\R$ with $s\leq s'$,
\end{itemize}
   and which satisfies the following conditions:
\begin{itemize}
    \item[(a)] $\mathcal{S}$ is a nowhere dense subset of $\R$ which is bounded from below.
    \item[(b)] The persistence module $(V,\pi)$ is $q$-\textit{tame}: $\pi_{st}$ has finite rank for all $s<t$.
     \item[(c)] For all $t\in\R$, $V_t=\varinjlim\limits_{s<t} V_s$.
    \item[(d)]  $\pi_{s',s''}\circ\pi_{s,s'}=\pi_{s,s''}$ for all $s,s',s''\in\R$, and
if the interval $[s,s']$ does not intersect $\mathcal{S}$, then the map $\pi_{s,s'}$ is an isomorphism.
\end{itemize}
\end{df}

Let $(V,\pi)$ be a persistence module. 
Given $\delta\in\R$, we denote by $(V[\delta],\pi[\delta])$ its $\delta$-shift, i.e., a new persistence module by taking $(V[\delta])_t=V_{t+\delta}$ and $(\pi[\delta])_{s,t}=\pi_{s+\delta,t+\delta}$. 


Let $(V,\pi^V)$ and $(W,\pi^W)$ be two persistence modules.  
A \textit{morphism} $L$ from $(V,\pi^V)$ to $(W,\pi^W)$ is a family of linear maps $L_t:V_t\to W_t$, $t\in \R$, such that $\pi^W_{s,t}\circ L_s=L_t\circ\pi^V_{s,t}$ for any $s,t\in \R$ with $s\leq t$. 
For $\delta>0$, a $\delta$-shift of $L$ is defined as a morphism $L[\delta]:(V[\delta],\pi^V[\delta])\to (W[\delta],\pi^W[\delta])$ satisfying $(L[\delta])_t=L_{t+\delta}$.

\begin{df}
 Given $\delta>0$, we say that two persistence modules $(V,\pi^V)$ and $(W,\pi^W)$ are $\delta$-\textit{interleaved} if there exist two morphisms $f:V\to W[\delta]$ and $g:W\to V[\delta]$ such that $g[\delta]\circ f=\pi^V_{t,t+2\delta}$ and $f[\delta]\circ g=\pi^W_{t,t+2\delta}$.
\end{df}

 In what follows we often suppress $\pi$ in the
 notation and simply denote $(V, \pi)$ as $V$. 

 The \textit{interleaving distance} between
 two persistence modules $V$ and $W$ is defined as follows:
 \[
 d_{I}(V,W)=\inf\{\delta>0|\; V\;\hbox{and}\;W\;\hbox{are $\delta$-interleaved}\}.     
 \]
 The interleaving distance $d_I$ defines a pseudometric on the set of persistence modules, which takes value in $[0,+\infty]$; see~\cite{CCGGO}. 

\begin{thm}[Structure theorem,~{\cite{ZC}}]\label{thm:str}
 Let $V$ be a persistence module with spectrum $\mathcal{S}$. Then there is a countable collection of pairwise distinct intervals $I_1,I_2,\ldots $ of the form $(a_i,b_i]$ or $(a_i,+\infty)$ for $a_i,b_i\in \mathcal{S}$ with positive integer multiplicities $k_1,k_2,\ldots$ such that 
\begin{equation}\label{e:structure}
    V\cong \bigoplus_{i=1}^\infty
    \big(\bK I_i\big)^{k_i} 
\end{equation}
where, for $I=(a,b]$ or $I=(a,+\infty)$, $\bK I$ is a persistence module which satisfies $(\bK I)_s=\bK$ for $s\in I$ and 
$(\bK I)_s=0$ otherwise, and $\pi_{s,s'}=Id$ for $s,s'\in I$ and $\pi_{s,s'}=0$ otherwise. 
\end{thm}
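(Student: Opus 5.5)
The natural route is to reduce to an existing interval-decomposition theorem and then use axioms (a)–(d) of Definition~\ref{df:pmod} to pin down the shape, endpoints and countability of the intervals.

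\emph{Step 1: pointwise finite-dimensional case.} First assume every $V_s$ is finite dimensional over $\bK$. This holds in all Floer-theoretic examples, since $V$ comes from a filtered complex of finite rank. Crawley-Boevey's theorem says that a pointwise finite-dimensional persistence module over a totally ordered set is a direct sum of interval modules $\bK I$. So $V\cong\bigoplus_{j}\bK J_j$ for some family of intervals $J_j\subset\R$, each of whose endpoints may be open or closed.

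\emph{Step 2: form of the intervals and location of endpoints.}
\begin{itemize}
\item The colimit axiom (c) passes to each summand, because colimits commute with direct sums. For an interval module, $(\bK J)_t=\varinjlim_{s<t}(\bK J)_s$ says exactly that $t\in J$ if and only if $s\in J$ for all $s<t$ close to $t$. Hence $J$ is open at its left end and closed at its right end whenever the right end is finite, so $J=(a,b]$ or $J=(a,+\infty)$.
\item $J$ cannot be unbounded below: by (a) the spectrum $\mathcal S$ is bounded below, and by (d) the module is constant for $s<\inf\mathcal S$. Under the usual convention $V_s=0$ there, no bar reaches $-\infty$.
\item If an endpoint $c$ of $J$ were not in $\mathcal S$, then, since $\mathcal S$ is closed and nowhere dense, there would be an interval $[s,s']\ni c$ in its interior disjoint from $\mathcal S$. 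By (d), $\pi_{s,s'}$ would then be an isomorphism, which contradicts the rank of the structure maps jumping at $c$ in that summand. So $a_i,b_i\in\mathcal S$.
\end{itemize}

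\emph{Step 3: countability and finite multiplicities.} Every bar $(a,b]$ or $(a,\infty)$ contains a closed interval $[s,t]$ with $s<t$ rational. For fixed $s<t$, the bars containing $[s,t]$ contribute independently to $\operatorname{rank}\pi_{s,t}$, which is finite by q-tameness (b). Hence only finitely many bars, counted with multiplicity, contain a given rational pair. Taking the union over the countably many rational pairs gives countably many distinct intervals $I_i$, each with finite positive multiplicity $k_i$. This is \eqref{e:structure}.

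\emph{Step 4: the general q-tame case (main obstacle).} If some $V_s$ is infinite dimensional, Crawley-Boevey's theorem does not apply directly. I would instead invoke the decomposition of q-tame modules of Chazal--Crawley-Boevey--de Silva, which holds up to ephemeral modules (those with all $\pi_{s,t}=0$ for $s<t$). Axiom (c) should kill the ephemeral part: if $W$ is an ephemeral summand, then $W_t=\varinjlim_{s<t}W_s$ forces $W_t=0$, since all transition maps in that colimit vanish. The delicate point is to make the passage from the observable-category decomposition to a genuine isomorphism \eqref{e:structure} rigorous using (c) and (d). One way is to work on each gap interval of $\R\setminus\mathcal S$, where $V$ is constant, and glue. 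In the applications I expect this to be avoidable, since Floer persistence modules are pointwise finite dimensional and Steps 1–3 already suffice.
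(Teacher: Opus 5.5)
The paper gives no proof of this statement; it only cites Zomorodian--Carlsson, whose classification is for finite-type modules (finitely generated graded $\bK[t]$-modules). That result does not literally cover a q-tame module whose spectrum accumulates. Your reduction to Crawley-Boevey's theorem is therefore a different and more appropriate route, and Steps 1--3 are correct for pointwise finite-dimensional $V$:
\begin{itemize}
\item (c) passes to direct summands and forces the shapes $(a,b]$ and $(a,\infty)$;
\item (d) pushes the endpoints into the spectrum;
\item (b), applied to rational pairs, gives countably many bars with finite multiplicities.
\end{itemize}
There are two small caveats. First, axiom (a) only says $\mathcal S$ is nowhere dense, not closed, and your endpoint argument uses closedness. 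Without it you only get $a_i,b_i\in\bar{\mathcal S}$; for example, $\mathcal S=\{1/n\}_{n\geq1}$ with $V=\bK(0,\infty)$ satisfies (a)--(d). Second, as you note yourself, excluding bars that reach $-\infty$ needs the extra convention $V_s=0$ for $s\ll0$.

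The genuine gap is Step 4, and you cannot bypass it. Pointwise finite-dimensionality is not among (a)--(d), and it is not automatic for Floer modules: Floer complexes have finite rank over $\Lambda$, not over $\bK$. The paper itself remarks at the start of Section~\ref{sec:floer} that Novikov-coefficient filtered Floer homology is not even q-tame over $\bK$.

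Your sketched repair does not work as written. The Chazal--Crawley-Boevey--de Silva decomposition is an isomorphism only in the observable (localized) category, i.e.\ via morphisms with ephemeral kernel and cokernel. So there is no ephemeral direct summand $W\subset V$ to which (c) could be applied. Likewise, ``gluing over the gaps of $\R\setminus\mathcal S$'' is not an argument when $\mathcal S$ is, say, a Cantor set, since most spectral points are then not endpoints of gaps.

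The missing idea is the radical, $(\operatorname{rad}V)_t=\sum_{s<t}\operatorname{im}\pi_{s,t}$. The surjectivity half of (c) says precisely that $V_t=\bigcup_{s<t}\operatorname{im}\pi_{s,t}$, i.e.\ $V=\operatorname{rad}V$. Chazal--Crawley-Boevey--de Silva prove that the radical of a q-tame module is a genuine direct sum of interval modules; this is the input behind their observable decomposition, and it is what Schmahl's structure theorem for semi-continuous q-tame modules rests on. Use this in place of Crawley-Boevey's theorem. Your Steps 2 and 3 then apply verbatim, with the injectivity half of (c) excluding bars of the form $(a,b)$, and you obtain the statement without any finite-dimensionality hypothesis.
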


\begin{df}\label{def:barcode}
The multiset which contains $k_i$ copies of $I_i$ in (\ref{e:structure}) is called the \textit{barcode} of the persistence module $V$, and is denoted by $\mathcal{B}(V)$. Each interval $I_i$ appearing in $\mathcal{B}(V)$ is called a \textit{bar}. 
\end{df}

Given a barcode $\mathcal{B}=\mathcal{B}(V)$, for a positive number $\epsilon$ and $s\in (-\infty,+\infty]$, we denote by $\mathcal{B}^\epsilon$ the multiset of all bars from $\mathcal{B}$ of length larger than $\epsilon$, and $\mathcal{B}^\epsilon_s$ the multiset of bars $(a,b]$ in $\mathcal{B}$ with $a<s$ of length $b-a>\epsilon$, respectively.  Let $\mathcal{FB}^\epsilon_s$ be the multiset of finite intervals by keeping all finite bars and truncating infinite bars at $s$ in $\mathcal{B}^\epsilon_{s-\epsilon}$, namely $(a,b]\in \mathcal{FB}^\epsilon_s$ if and only if $(a,b]$ is a finite bar in $\mathcal{B}^\epsilon_{s-\epsilon}$ or $(a,\infty)$ is an infinite bar in $\mathcal{B}^\epsilon_{s-\epsilon}$ and $b=s$.
We use $|\mathcal{B}^\epsilon|$ (resp. $|\mathcal{B}^\epsilon_s|$, $|\mathcal{FB}^\epsilon_s|$) to denote the number of bars in $\mathcal{B}^\epsilon$ (resp. $\mathcal{B}^\epsilon_s$, $\mathcal{FB}^\epsilon_s$). It is well-known that $|\mathcal{B}^\epsilon_s|<\infty$;  see~\cite{FLS,CGG2}.

For $c\in\R$, we use $\mathcal{B}[c]$ to denote a $c$-shift of $\mathcal{B}=\oplus_i(\bK I_i)^{k_i}$, i.e., $$\mathcal{B}[c]=\oplus_i(\bK(I_i-c))^{k_i}$$ where $(a,b]-c=(a-c,b-c]$ and $(a,\infty)-c=(a-c,\infty)$. 


Given an interval $I=(a,b]$ or $I=(a,+\infty)$ and a real number $\delta\in (0,\frac{b-a}{2})$ , we put $I^{\delta}=(a-\delta,a+\delta)$ or $I^{\delta}=(a-\delta,+\infty)$. 

Two barcodes $\mathcal{B}_1$ and $\mathcal{B}_2$ are said to have a $\delta$-\textit{matching} if one can delete some of bars of length less than or equal to $2\delta$ from $\mathcal{B}_1$ and $\mathcal{B}_2$ and find a bijection $F:\mathcal{B}_1^{2\delta}\to \mathcal{B}_2^{2\delta}$ such that if $F(I)=J$ then 
$I\subset J^\delta$ and $J\subset I^\delta$. The \textit{bottleneck distance} $d_B(\mathcal{B}_1,\mathcal{B}_2)$ between $\mathcal{B}_1$ and $\mathcal{B}_2$ is defined as the infimum over $\delta>0$ such that there exists a $\delta$-matching between them. Note that the bottleneck distance $d_B$ takes values in $[0,+\infty]$. 

By the isometry theorem for persistence modules (see~\cite{CSGO,BL}), the barcode map $\cB$ from Theorem~\ref{thm:str} is in fact an isometry, that is to say, for two persistence modules $V$ and $W$, we have
\[
d_{I}(V,W)=d_B(\cB(V),\cB(W)).   
\]

\subsection{Shannon entropy of persistence barcodes}

Recall that the \textit{Shannon entropy} $E(\xi)$ of a probability distribution $\xi=(p_1,\ldots, p_k)\in\R^k$ with $p_i\geq 0$ and $\sum_{i=1}^kp_i=1$ is defined as
\[
E(\xi)=-\sum_{i=1}^kp_i\log p_i; 
\]
see~\cite{Sh}.  This concept has  been introduced to topological data analysis~\cite{CGGJK,RCMP} to define a new entropy of persistent homology as follows. 

Denote by $\mathbb{B}_F$ the space of barcodes $\mathcal{B}$  which have
no infinite-length bars and $|\mathcal{B}|<\infty$, i.e., $\mathcal{B}$ consists of finite multisets comprising intervals $(a,b]$ with $-\infty< a\leq b<\infty$. 

\begin{df}\label{df:Shanentropy}
The Shannon entropy $E(\mathcal{B})$ of a persistence barcode $\mathcal{B}=\{(a_i,b_i]\}_{1\leq i\leq n}$ in $\mathbb{B}_F$ is defined as
\[
E(\mathcal{B})=-\sum_{i=1}^n\frac{\ell_i}{L}\log\bigg(\frac{\ell_i}{L}\bigg)
\]
where $\ell_i=b_i-a_i$ and $L=\sum_{i=1}^n\ell_i$.
\end{df}

\begin{rmk}
The Shannon entropy of a persistent barcode is called \textit{persistent entropy} in the literature; see for instance~\cite{AGR,AGS,CGGJK,RCMP}. Here we avoid using the term ``persistent entropy" because in this article it specifically refers to the limit of the Shannon entropy of the Floer persistence barcode under the iteration of Hamiltonian diffeomorphisms; see Sections~\ref{subsec:def-perEntropy} and~\ref{subsec:def-per-Liou}.
\end{rmk}

The Shannon entropy of a persistent barcode possesses an important stability property, which is inherited from the stability of persistent modules~\cite{CEH,CCGGO,CSGO}.
Under certain assumptions on the number and the sum of lengths of bars in $\mathcal{B}\in\mathbb{B}_F$, N. Atienza, R. Gonzalez-Diaz and M. Rucco~\cite{AGR} proved the continuity of Shannon entropy with respect to the bottleneck distance $d_B$. Furthermore, we have the following stability result. 

\begin{thm}[Stability of Shannon entropy,~{\cite[Thm~3.12]{AGS}}]\label{thm:stability}

Let $\mathcal{B}_1,\mathcal{B}_2\in\mathbb{B}_F$. If $d_B(\mathcal{B}_1,\mathcal{B}_2)\leq \frac{L_{\rm max}}{8n}$, then 
\[
|E(\mathcal{B}_1)-E(\mathcal{B}_2)|\leq \frac{4n d_B(\mathcal{B}_1,\mathcal{B}_2)}{L_{\rm max}}\bigg(\log n-\log\bigg( \frac{4n d_B(\mathcal{B}_1,\mathcal{B}_2)}{L_{\rm max}}\bigg)\bigg)
\]
where $n=|\mathcal{B}_1|+|\mathcal{B}_2|$, $L_{\rm max}=\max\{L_1,L_2\}$ and $L_i, i=1,2$ denotes the sum of lengths of all bars in $\mathcal{B}_i$.  

\end{thm}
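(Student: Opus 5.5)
We plan to reduce the statement to a Fannes-type continuity estimate for Shannon entropy of probability vectors, after converting a bottleneck matching into an $\ell^1$-bound on normalized length vectors. Fix $\delta>d_B(\mathcal{B}_1,\mathcal{B}_2)$ with $\delta\le L_{\rm max}/(8n)$ (possible when $d_B<L_{\rm max}/(8n)$; the boundary case follows by letting $\delta\searrow d_B$, since the right-hand side is continuous in $d_B$). Choose a $\delta$-matching $F$. We build two length vectors $\ell,\ell'\in\R_{\ge0}^{N}$ with $N\le|\mathcal{B}_1|+|\mathcal{B}_2|=n$ as follows:
\begin{itemize}
\item each matched pair $(I,F(I))$ gives one coordinate, and $|\ell_i-\ell'_i|\le2\delta$, since each endpoint moves by at most $\delta$;
\item each unmatched bar of $\mathcal{B}_1$ (resp. $\mathcal{B}_2$) gives a coordinate paired with a zero entry, and its length is at most $2\delta$.
\end{itemize}
Padding by zeros does not change the Shannon entropies, so $E(\mathcal{B}_1)=E(p)$ and $E(\mathcal{B}_2)=E(q)$, where $p=\ell/L_1$ and $q=\ell'/L_2$. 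By the two cases above, $\|\ell-\ell'\|_1\le 2n\delta$. Since $\sum_i\ell_i=L_1$ and $\sum_i\ell'_i=L_2$, we also have $|L_1-L_2|\le\|\ell-\ell'\|_1$.

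Next we bound $\|p-q\|_1$. Assume without loss of generality $L_1=L_{\rm max}$. Then
\[
\Big\|\frac{\ell}{L_1}-\frac{\ell'}{L_2}\Big\|_1\le \frac{\|\ell-\ell'\|_1}{L_1}+\|\ell'\|_1\Big|\frac1{L_1}-\frac1{L_2}\Big| =\frac{\|\ell-\ell'\|_1+|L_1-L_2|}{L_{\rm max}}\le\frac{4n\delta}{L_{\rm max}}=:t .
\]
The hypothesis gives $t\le 1/2$.

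The main step is the continuity inequality: for probability vectors $p,q$ on $N\le n$ points with $\|p-q\|_1\le t\le1/2$,
\[
|E(p)-E(q)|\le t\log n-t\log t .
\]
We would prove it in two steps. First, the elementary scalar estimate $|\eta(x)-\eta(y)|\le\eta(|x-y|)$ for $x,y\in[0,1]$ with $|x-y|\le1/2$, where $\eta(s)=-s\log s$; this follows from concavity of $\eta$ and $\eta(0)=0$, treating separately the cases where both points lie to one side of the maximum of $\eta$. Summing over coordinates gives $|E(p)-E(q)|\le\sum_i\eta(t_i)$ with $t_i=|p_i-q_i|$ and $\sum_i t_i=t'\le t$. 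Second, Jensen's inequality for the concave $\eta$ gives $\sum_i\eta(t_i)\le N\,\eta(t'/N)=t'\log N-t'\log t'$. Since $s\mapsto s\log n-s\log s$ is increasing on $[0,1/2]$ for $n\ge2$, this is at most $t\log n-t\log t$.

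Substituting $t=4n\delta/L_{\rm max}$ and letting $\delta\searrow d_B$ yields the claimed bound. We expect the only delicate point to be the scalar estimate near the maximum of $\eta$, together with checking that the constraint $t\le1/2$ (hence the factor $8n$ in the hypothesis) is exactly what is needed for it and for the monotonicity step.
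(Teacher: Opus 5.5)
Your argument is correct. It is essentially the proof in \cite{AGS}, which the paper cites without giving a proof of its own: pad unmatched bars with zero-length partners, bound $\|p-q\|_1\le 4n\delta/L_{\rm max}\le \tfrac12$, and apply the Fannes/Cover--Thomas continuity bound $|E(p)-E(q)|\le t\log n-t\log t$.

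The one slip is the boundary case $d_B=L_{\rm max}/(8n)$. There no $\delta>d_B$ satisfies $\delta\le L_{\rm max}/(8n)$, so letting $\delta\searrow d_B$ does not apply as written. You can fix this in either of two ways:
\begin{itemize}
\item use that the infimum defining $d_B$ is attained for finite barcodes, and take $\delta=d_B$ directly;
\item note that $|p_i-q_i|\le\tfrac12\|p-q\|_1$ and that $s\mapsto s\log n-s\log s$ increases on $[0,n/e]$, so your estimate still holds for $t$ slightly above $\tfrac12$.
\end{itemize}
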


\section{Floer persistence barcode and  boundary depth} \label{sec:floer}

In this section  we will give a way of describing a barcode of the Floer complex for the Lagrangian pair $(L_0,L_1)$ provided that $L_0$ intersects $L_1=\varphi_H^{-1}(L_0)$ transversely.

Consider the filtered Lagrangian Floer homologies
$ HF_*^{<a}(L_0,H)$, together with the natural maps $\ell$ given as in~(\ref{e:struc}). 
$ HF_*^{<a}(L_0,H)$ is neither a vector space for $a\in\R$  over $\Lambda$ nor forms  a $q$-tame persistent module over $\bK$ (cf. Definition~\ref{df:pmod}) in general. So Theorem~\ref{thm:str} does not apply to this persistence module. However, one can adapt the constructions from~\cite{UZ} to our framework and obtain a barcode  associated to the Floer persistence modules in our setting, as already done in~\cite[Sect.3.3]{CGG}. 

\begin{df} Let $V$ be a vector space over $\Lambda$, and $\mathcal{I}:V\to \R\cup\{-\infty\}$ a function. The pair $(V,\mathcal{I})$ is called a non-Archimedean normed vector space if 
\begin{itemize}
    \item[(i)] $\mathcal{I}(0)=-\infty$, and $\mathcal{I}(c)\in\R$ for all nonzero $c\in V$; 
    \item[(ii)] $\mathcal{I}(\alpha\cdot c)=\mathcal{I}(c)+\nu(\alpha)$ for all $\alpha\in \Lambda$ and $c\in V$;
    \item[(iii)] $\mathcal{I}(c_1+c_2)\leq\max\{\mathcal{I}(c_1),\mathcal{I}(c_2)\}$ for all $c_1,c_2\in V$.
\end{itemize}
\end{df}

\begin{df}
A finite-dimensional non-Archimedean
normed vector space $(V,\mathcal{I})$ over a Novikov field $\Lambda$ is called an \textit{orthogonalizable $\Lambda$-space} if there exists an orthogonal basis for $V$, meaning that a finite ordered collection $(v_1,\ldots,v_n)$ of elements of $V$ with the property
\[
\mathcal{I}\big(\sum_{i=1}^n\alpha_iv_i\big)=\max_{1\leq i\leq n}\mathcal{I}(\alpha_iv_i)
\]
for all $\alpha_1,\ldots, \alpha_n\in\Lambda$.

\end{df}

\begin{df}
Let $(V,\mathcal{I})$ be a non-Archimedean normed vector space over a Novikov field $\Lambda$. We say that two subspaces $E$ and $F$ of $V$ are orthogonal if 
\[
\mathcal{I}(e+f)=\max\big\{\mathcal{I}(e),\mathcal{I}(f)\big\}    
\]
for all $e\in E$ and $f\in F$.
\end{df}

Through a non-Archimedean Gram-Schmidt process, one can show that any subspace $U$ of an orthogonalizable $\Lambda$-space $(V,\ell)$ has an orthogonal complement $W$, i.e., $U\oplus W=V$, and $U,W$ are orthogonal; see~\cite[Corollary~2.19]{UZ}. 

Next, we record the following standard result, i.e., the \textit{singular value decomposition} for linear maps between orthogonalizable spaces.    

\begin{lem}[{\cite[Theorem~3.5]{UZ}}]\label{lem:orthogonal}
Let $(V,\ell_V)$ and $(W,\ell_W)$ be two orthogonalizable $\Lambda$-spaces. Let $L:V\to W$ be a $\Lambda$-linear map. Then there are orthogonal bases $(\xi_1,\ldots,\xi_n)$ for $V$ and $(\eta_1,\ldots,\eta_m)$ for $W$ such that the following holds:
\begin{itemize}
 \item $L\xi_i=\eta_i$ for $i=1,\ldots,r$;
    \item $(\xi_{r+1},\dots,\xi_n)$ is an orthogonal ordered basis for ${\rm ker}(L)$. 
\end{itemize}
\end{lem}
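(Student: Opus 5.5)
The plan is to reduce the statement to a \emph{simultaneous orthogonalization} lemma for two non-Archimedean norms on one finite-dimensional $\Lambda$-space, and then assemble the bases using orthogonal complements (\cite[Corollary~2.19]{UZ}).

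\textbf{Step 1: splitting off the kernel.} Since $(V,\ell_V)$ is orthogonalizable, the subspace $\ker(L)$ has an orthogonal complement $U$, so $V=U\oplus\ker(L)$ with $U$ and $\ker(L)$ orthogonal. A subspace of an orthogonalizable space is again orthogonalizable, so we may fix an orthogonal basis $(\xi_{r+1},\ldots,\xi_n)$ of $\ker(L)$, where $r=\dim U$. If we then find an $\ell_V$-orthogonal basis $(\xi_1,\ldots,\xi_r)$ of $U$, orthogonality of $U$ and $\ker(L)$ gives
\[
\ell_V\Big(\sum_i\alpha_i\xi_i\Big)=\max\Big\{\ell_V\Big(\sum_{i\le r}\alpha_i\xi_i\Big),\,\ell_V\Big(\sum_{i>r}\alpha_i\xi_i\Big)\Big\}=\max_i\ell_V(\alpha_i\xi_i),
\]
so the concatenated basis of $V$ is orthogonal.

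\textbf{Step 2: reduction to two norms on $U$.} The restriction $L|_U:U\to\im(L)$ is an isomorphism. Pulling back $\ell_W$ gives a second non-Archimedean norm $\ell'(x):=\ell_W(Lx)$ on $U$. It is orthogonalizable, because $\im(L)\subset W$ is. It therefore suffices to find a basis $(\xi_1,\ldots,\xi_r)$ of $U$ that is orthogonal for both $\ell_V$ and $\ell'$. Then $\eta_i:=L\xi_i$ for $i\le r$ is an orthogonal basis of $\im(L)$. Taking an orthogonal complement $C$ of $\im(L)$ in $W$ with an orthogonal basis $(\eta_{r+1},\ldots,\eta_m)$ and concatenating as in Step~1 gives the required orthogonal basis of $W$, with $L\xi_i=\eta_i$ for $i\le r$.

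\textbf{Step 3: simultaneous orthogonalization.} I would prove this by induction on $\dim U$. Consider the scale-invariant function
\[
\delta(x)=\ell_V(x)-\ell'(x),\qquad x\in U\setminus\{0\}.
\]

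Fix an $\ell_V$-orthogonal basis $(u_j)$ and an $\ell'$-orthogonal basis $(w_k)$ of $U$. Writing $x$ in the basis $(u_j)$ shows that $\ell_V(x)=\ell_V(\alpha_jT^{?}u_j)$ is attained at a single coordinate. Expressing the $u_j$ in the basis $(w_k)$ then bounds $\ell'$ above and below in terms of the coordinates. Together these show that $\delta$ is bounded. For the supremum to be attained, I would argue that $\delta$ takes only finitely many values modulo the discrete structure of the coefficient valuations; equivalently, one can approximate by finite truncations of Novikov coefficients. This attainment is the step I expect to be the main technical obstacle. If direct attainment is awkward, I would instead replace ``maximize'' by an explicit Gram--Schmidt procedure that modifies the $u_j$ by subtracting $\ell'$-projections, as in \cite{UZ}.

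Let $x_1$ attain $\max\delta$. Let $U'$ be an $\ell'$-orthogonal complement of $\Lambda x_1$. I claim $x_1$ is also $\ell_V$-orthogonal to $U'$. Suppose instead that $\ell_V(x_1+y)<\max\{\ell_V(x_1),\ell_V(y)\}$ for some $y\in U'$. Then $\ell_V(x_1)=\ell_V(y)$. On the other hand, $\ell'$-orthogonality gives $\ell'(x_1+y)=\max\{\ell'(x_1),\ell'(y)\}\ge\ell'(x_1)$. Since the $\ell_V$ value dropped, a short case check gives $\delta(y)>\delta(x_1)$ or $\delta(x_1+y)>\delta(x_1)$, contradicting maximality.

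Hence $U=\Lambda x_1\oplus U'$ is an orthogonal splitting for both norms. Applying the induction hypothesis to $U'$ with the restricted norms, and prepending $\xi_1=x_1$, completes the induction and the proof.
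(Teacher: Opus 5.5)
There is a genuine gap in Step~3: the orthogonality transfer is paired with the wrong norm. Take $x_1$ maximizing $\delta=\ell_V-\ell'$ and $U'$ an $\ell'$-orthogonal complement of $\Lambda x_1$. Suppose $\ell_V(x_1+y)<\ell_V(x_1)=\ell_V(y)$ while $\ell'(x_1+y)=\max\{\ell'(x_1),\ell'(y)\}\ge\ell'(x_1)$. Both facts push $\delta(x_1+y)$ \emph{below} $\delta(x_1)$. Moreover, $\delta(y)>\delta(x_1)$ would require $\ell'(y)<\ell'(x_1)$, and nothing forces that. So the ``short case check'' cannot produce a contradiction.

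The claim is in fact false. On $U=\Lambda e_1\oplus\Lambda e_2$, let $\ell'$ have orthogonal basis $(e_1,e_2)$ with $\ell'(e_1)=\ell'(e_2)=0$. Let $\ell_V$ have orthogonal basis $(e_1,e_1+e_2)$ with $\ell_V(e_1)=0$ and $\ell_V(e_1+e_2)=-1$. Then $\ell_V(e_2)=0$, so $\ell_V(\alpha e_1+\beta e_2)\le\max\{\ell_V(\alpha e_1),\ell_V(\beta e_2)\}=\ell'(\alpha e_1+\beta e_2)$. Hence $\delta\le 0=\delta(e_1)$, and $x_1=e_1$ is a maximizer. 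Now $U'=\Lambda e_2$ is an $\ell'$-orthogonal complement of $\Lambda e_1$, yet $\ell_V(e_1+e_2)=-1<0=\max\{\ell_V(e_1),\ell_V(e_2)\}$. Here $\delta(e_2)=\delta(e_1)$ and $\delta(e_1+e_2)=-1$, so neither alternative of your case check occurs.

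The repair is to take $U'$ to be an $\ell_V$-orthogonal complement of $\Lambda x_1$. Equivalently, you could minimize $\delta$ and keep the $\ell'$-complement. Suppose $\ell'(x_1+y)<\max\{\ell'(x_1),\ell'(y)\}$ for some $y\in U'$. Then $\ell'(x_1+y)<\ell'(x_1)$, while $\ell_V(x_1+y)\ge\ell_V(x_1)$, so $\delta(x_1+y)>\delta(x_1)$. That is a genuine contradiction, and by rescaling $y$ it gives orthogonality of $\Lambda x_1$ and $U'$ for both norms.

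Also, attaining $\max\delta$ is not the obstacle you expect. Your suggested route, via finitely many values modulo $\Gamma$, only works when $\Gamma$ is discrete, which fails for a general Novikov field such as $\Lambda_{\rm univ}$. Instead, take any $\ell'$-orthogonal basis $(w_k)$ and write $x=\sum_k\beta_kw_k$. Using $\delta(\beta_k w_k)=\delta(w_k)$ and taking maxima over $k$ with $\beta_k\neq 0$, you get $\ell_V(x)\le\max_k\big(\delta(w_k)+\ell'(\beta_kw_k)\big)\le\max_k\delta(w_k)+\ell'(x)$. So $\delta$ attains its maximum at some $w_k$, and the same argument works on $U'$ at each inductive step.

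With these two changes, your Steps~1--3 give a complete proof. The paper itself gives no argument and simply quotes \cite[Theorem~3.5]{UZ}.
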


Fixing $\eta$, as a subspace of an orthogonalizable $\Lambda$-space $(CF_{\eta}(L_0,H),\mathcal{A})$, the image of the boundary map $d_{H,J}$, denoted by  $\im (d_{H,J})$, is still an orthogonalizable non-Archimedean normed vector space over $\Lambda$. Take a non-Archimedean orthogonal complement $V$ of $\im (d_{H,J})$ in $CF_{\eta}(L_0,H)$. Consider now the linear map (over $\Lambda$)
\[d_{H,J}|_V: V\longrightarrow \im (d_{H,J}).
\]
Applying Lemma~\ref{lem:orthogonal} to the above map, we see that $CF_{\eta}(L_0,H)$ has a non-Archimedean orthogonal basis $$(x_0,\ldots,x_k,y_1,\ldots,y_l,z_1,\ldots,z_l)$$ satisfying the following:
\begin{itemize}
        \item $d_{H,J} x_i=0$ for $i=1,\ldots k$;
        \item $d_{H,J}y_i=z_i$ for $i=1,\ldots,l$.
\end{itemize}

We primitively  define a barcode $\hat{\mathcal{B}}_\eta(L_0,H)$ of the Floer complex $(CF_{\eta}^{<a}(L_0,H),\ell)$ as a multiset consisting of  intervals
\[
(\mathcal{A}(z_i),\mathcal{A}(y_i)]\quad \hbox{and} \quad (\mathcal{A}(x_j),\infty)
\]
(possibly) with multiplicities for all $i,j$. 
However, there is clearly some amount of arbitrariness of this definition of barcode for the Floer complex. Firstly, the filtration $\mathcal{A}$ on the Floer complex depends on the choice of the reference path $\gamma_\eta$ as we mentioned before. Secondly,  $\hat{\mathcal{B}}_\eta(L_0,H)$ also depends on the choice of singular value decomposition which is not unique. In fact, if $$(x_0,\ldots,x_k,y_1,\ldots,y_l,z_1,\ldots,z_l)$$ is an orthogonal basis for defining $\mathcal{B}_\eta(L_0,H)$, then $$(T^{\mu_1}x_0,\ldots,T^{\mu_k}x_k,T^{\lambda_1}y_1,\ldots,T^{\lambda_l}y_l,T^{\lambda_1}z_1,\ldots,T^{\lambda_l}z_l)$$ is also a singular value decomposition for any $\mu_1,\ldots,\mu_k,\lambda_1,\ldots,\lambda_l\in\Gamma$. 

In view of the above reasons we modify the definition of a barcode (cf. Definition~\ref{def:barcode}) as follows; see~\cite[Def.~6.3]{UZ} for a different treatment.

\begin{df}\label{df:bar}
A barcode $\mathcal{B}_\eta(L_0,H)$ of the Floer complex $CF_{\eta}(L_0,H)$ is defined as the multiset of elements $([a],L)$ of $(\R/\Gamma)\times [0,\infty]$ ($L$ is called the \textit{length} of the bar $([a],L)$) comprising
\begin{itemize}
        \item infinite bars $([\mathcal{A}(x_i)],\infty)$ for $i=1,\ldots,k$;
        \item finite bars $([\mathcal{A}(z_j)],\mathcal{A}(y_j)-\mathcal{A}(z_j))$ for $j=1,\ldots,l$.
\end{itemize}
\end{df}
 It can be shown that the bar lengths and the number of bars of $\mathcal{B}_\eta(L_0,H)$ are independent of the choice of a singular value decomposition and the auxiliary data, for instance, reference paths, almost complex structures $J$ defining $d_{H,J}$ and Hamiltonians $H$ with $L_1=\varphi_H^{-1}(L_0)$; see, e.g.~\cite{PS,UZ,KS}. Note that the number of infinite bars of $\mathcal{B}_\eta(L_0,H)$ equals to $\hbox{dim}_\Lambda HF (L_0,H)$. 

\begin{rmk}
When $\Gamma=0$ (hence $\Lambda=\Z_2$),  bars $(a,b]$ and $(a,\infty)$ of a barcode in Definition~\ref{def:barcode} correspond to $(a,b-a)$ and $(a,\infty)$ of $\R\times [0,\infty]$, respectively.
\end{rmk}

The \textit{boundary depth} $\beta_\eta(L_0,L_1)$ of $CF_{\eta}(L_0,H)$ is defined as the maximal value among all finite bar-lengths  $\mathcal{A}(y_i)-\mathcal{A}(z_i)$.    
 Taking the supremum over all $\eta\in\pi_0(\cP(L_0,L_0))$ gives the boundary depth of $CF(L_0,H)$
\[
\beta(L_0,L_1)=\sup_{\eta\in\pi_0(\cP(L_0,L_0))}\beta_\eta(L_0,L_1).
\]
It was shown that 
\begin{equation}\label{e:deltaH}
|\beta(L_0,L_1)-\beta(L_0,L'_1)|\leq \delta_H(L_1,L'_1)
\end{equation}
where $L_0\pitchfork L'_1=\varphi_G^{-1}(L_0)$ for some $G\in\cH$; 
see~\cite{Us2,KS,UZ}. Therefore, in case that $L_1=\varphi_H^{-1}(L_0)$ is not transverse to $L_0$, one can define $\beta(L_0,L_1)$ as the limit of $\beta(L_0,\varphi_{H_n}^{-1}(L_0))$ for any $H_n$ with $H_n\to H$ in $C^\infty$ and  $L_0\pitchfork \varphi_{H_n}^{-1}(L_0)$.

\begin{rmk}\label{rem:lingrowth}
From (\ref{e:deltaH}) one can see that  
\[
\begin{aligned}
\beta(L_0,\varphi^{-n}(L_1))
&\leq \beta(L_0,L_0)+\delta_H(\varphi^{-n}(L_1),L_0)\\
&\leq \beta(L_0,L_0)+n\delta_H(\varphi^{-1}(L_1),L_0).
\end{aligned}
\]
for any $\varphi\in\ham(M,\omega)$ and $n\in\N$, where in the second inequality we have used the $\ham$-biinvariant property of the Hofer distance $\delta_H$. 
\end{rmk}

As in~\cite{UZ}, we define a bottleneck distance between two multisets (barcodes) $\mathcal{U}$ and $\mathcal{V}$ of elements of $(\R/\Gamma)\times [0,\infty]$ as follows:

We say that $\mathcal{U}$ and $\mathcal{V}$ have a \textit{$\delta$-matching} ($\delta>0$) if it is possible to delete the bars of length less than or equal to $\delta$ from $\mathcal{U}$ and $\mathcal{V}$ to obtain $\mathcal{U}'$ and $\mathcal{V}'$ such that there is a bijection $F:\mathcal{U}'\to \mathcal{V}'$ which satisfies that if $F([a],S)=([b],T)$  for $([a],S)\in\mathcal{U}'$ then for each $\epsilon>0$ the representative $b$ of $[b]$ can be chosen such that $|a-b|\leq\delta+\epsilon$ and either $S=T=\infty$ or $|(b+T)-(a+S)|\leq \delta+\epsilon$.  The \textit{bottleneck distance} between $\mathcal{U}$ and $\mathcal{V}$ is 
\[
d_B(\mathcal{U},\mathcal{V})=\inf\{\delta>0|\; \hbox{there exists a $\delta$-matching between $\mathcal{U}$ and $\mathcal{V}$}\}.
\]

When $\Gamma=0$, this function $d_B$ coincides with the bottleneck distance defined in Section~\ref{sec:persist}. 

In the following context we denote
$\mathcal{B}(L_0,H)=\bigsqcup_{\eta}\mathcal{B}_\eta(L_0,H)$. For $\epsilon>0$, we denote by $\mathcal{B}^\epsilon(L_0,H)$ the set of all bars from $\mathcal{B}(L_0,H)$ of length greater than $\epsilon$. Recall that $\mathcal{FB}^\epsilon(L_0,H)$ denotes the multiset of all finite-length bars from $\mathcal{B}(L_0,H)$ with length larger than $\epsilon$.
The numbers of bars for these two multisets satisfies the identity
\begin{equation}\label{e:shortbar}
|\mathcal{B}^\epsilon(L_0,H)|=|\mathcal{FB}^\epsilon(L_0,H)|+\dim_{\Lambda}HF_*(L_0,L_0).
\end{equation}
Clearly, we have
\begin{equation}\label{e:dim}
|L_0\cap\varphi_H^{-1}(L_0)|=\dim_{\Lambda}CF(L_0,H)=2|\mathcal{B}(L_0,H)|-\dim_{\Lambda}HF(L_0,L_0).
\end{equation}

For two Hamiltonians $F,G$, we define
\[
d_B(\mathcal{B}(L_0,F),\mathcal{B}(L_0,G))=\sup_\eta d_B(\mathcal{B}_\eta(L_0,F),\mathcal{B}_\eta(L_0,G)).
\]

For a barcode $\mathcal{B}=\{([a_i],l_i)|\;1\leq i\leq n\}$ with each bar $([a_i],l_i)\in(\R/\Gamma)\times [0,\infty]$, we define a \textit{shift} of $\mathcal{B}$ by a constant $c\in\R$ as
\[
\mathcal{B}[c]=\{([a_i+c],l_i)|\;1\leq i\leq n\}.
\]

The following  stability result in symplectic topology is now standard; see for instance~\cite{PS,KS,Us2,UZ}.
\begin{lem}\label{lem:stability}
Assume that $L_0$ intersects both $\varphi_F^{-1}(L_0)$ and  $\varphi_G^{-1}(L_0)$ transversely for $F,G\in\cH$. Then for each $\eta\in\pi_0(\cP(L_0,L_0))$, there is a constant $c_\eta\in\R$ such that
$$d_B(\mathcal{B}_\eta(L_0,F)[c_\eta],\mathcal{B}_\eta(L_0,G))\leq \|F\#\overline{G}\|.$$
\end{lem}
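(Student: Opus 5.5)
This lemma is the standard Hofer-stability of Floer barcodes, so the plan is to reduce it to the isometry theorem via continuation maps. The complex here lives over the Novikov field $\Lambda$ rather than over $\bK$, so I would rather pass through the non-Archimedean framework of Usher--Zhang than work with $q$-tame modules directly.

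First, fix $\eta\in\pi_0(\cP(L_0,L_0))$ and regular Floer data $(F,J)$ and $(G,J')$. Both $L_0\pitchfork\varphi_F^{-1}(L_0)$ and $L_0\pitchfork\varphi_G^{-1}(L_0)$ hold, so $CF_\eta(L_0,F)$ and $CF_\eta(L_0,G)$ are finite-dimensional orthogonalizable $\Lambda$-spaces with filtration $\mathcal A$.

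Next, take a monotone homotopy $H_s=F+\beta(s)(G-F)$ and build continuation maps
\[
\Phi:CF_\eta(L_0,F)\to CF_\eta(L_0,G),\qquad \Psi:CF_\eta(L_0,G)\to CF_\eta(L_0,F).
\]
Both maps count solutions of the $s$-dependent Floer equation with boundary on $L_0$. We use the same reference path $\gamma_\eta$ in both complexes and transport cappings along the continuation strips.

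The key step is the energy identity for a continuation strip $u$ from $x$ to $y$:
\[
\mathcal A_{L_0,G}(y)\le \mathcal A_{L_0,F}(x)+\int_0^1\max_M(G_t-F_t)\,dt .
\]
This gives filtration shifts
\[
\Phi\big(CF^{<a}\big)\subset CF^{<a+E^+}, \qquad E^+=\int_0^1\max(G_t-F_t)\,dt,
\]
and
\[
\Psi\big(CF^{<a}\big)\subset CF^{<a+E^-}, \qquad E^-=\int_0^1\max(F_t-G_t)\,dt=-\int_0^1\min(G_t-F_t)\,dt.
\]
Here $E^++E^-=\|G-F\|$ in the sense of oscillation.

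The compositions $\Psi\Phi$ and $\Phi\Psi$ are chain homotopic to the identity, and the homotopies $K$ also shift filtration by at most $E^++E^-$. Choosing the shift constant $c_\eta=(E^--E^+)/2$ symmetrizes the two shifts, so both become $\delta=(E^++E^-)/2$. This is where the constant $c_\eta$ arises. Reference-path changes only add further constants, which can be absorbed into $c_\eta$.

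By the Usher--Zhang stability theorem for filtered chain complexes over $\Lambda$ (their notion of $\delta$-quasiequivalence), such data yields a $\delta$-matching between the barcodes of Definition~\ref{df:bar} after the shift by $c_\eta$. This gives
\[
d_B\big(\mathcal B_\eta(L_0,F)[c_\eta],\mathcal B_\eta(L_0,G)\big)\le \tfrac12\|G-F\|.
\]
This is in particular at most $\|G-F\|$.

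To obtain the stated right-hand side $\|F\#\overline G\|$, I would replace $G-F$ by the difference Hamiltonian. Concretely, I compare $CF(L_0,F)$ and $CF(L_0,G)$ through the naturality isomorphism $CF(L_0,G)\cong CF(L_0,G\#\overline G\#F)$ up to action shift, i.e. reparametrize by the path $\varphi_G^{-t}\varphi_F^t$. Its generator is $\overline G\#F$, whose norm equals $\|F\#\overline G\|$ by conjugation invariance of $\|\cdot\|$ up to time reversal. Concatenation and naturality change the filtration only by constants, which are again absorbed into $c_\eta$.

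The main obstacle is this last bookkeeping step. One must check that the naturality (gauge) transformation preserves the barcode up to a constant shift component-by-component in $\eta$, and that cappings and the class $\eta$ are transported consistently under $\varphi_G^{-t}$. If this becomes delicate, the fallback is to prove the estimate with $\|F-G\|$ and then invoke the Hamiltonian-independence of the barcode: replace $F$ by a Hamiltonian $F'$ with the same time-one map and $\|F'-G\|=\|F\#\overline G\|$ up to an arbitrarily small error, then take the infimum.
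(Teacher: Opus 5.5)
The paper gives no proof of this lemma; it quotes it as standard from [PS, KS, Us2, UZ]. The first part of your argument is exactly the proof behind those references: monotone continuation maps with a common reference path, the energy estimate, filtration-controlled chain homotopies, and then Usher--Zhang's quasiequivalence/bottleneck stability for Floer-type complexes over $\Lambda$. It correctly proves the estimate with $\|F-G\|=\int_0^1\mathrm{osc}(F_t-G_t)\,dt$ on the right.

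One small caveat. The paper's $\delta$-matching for $(\R/\Gamma)$-barcodes deletes only bars of length $\le\delta$, whereas Usher--Zhang delete bars of length $\le 2\delta$. So a UZ $\delta$-matching is only a $2\delta$-matching in the paper's sense. Your factor $\tfrac12$ is lost, but $\le\|F-G\|$ survives.

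\textbf{The gap is the conversion to $\|F\#\overline G\|$.}

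\emph{The norms differ.} With the paper's conventions,
\[
\overline G\#F=(F_t-G_t)\circ\varphi_G^t,\qquad F\#\overline G=\big(F_t\circ\varphi_F^t-G_t\circ\varphi_G^t\big)\circ(\varphi_F^t)^{-1},
\]
so $\|\overline G\#F\|=\|F-G\|$ exactly, while $\|F\#\overline G\|=\|\overline F-\overline G\|$. These are not equal in general. The paths $(\varphi_G^t)^{-1}\varphi_F^t$ and $\varphi_F^t(\varphi_G^t)^{-1}$ are conjugate only by the \emph{moving} family $\varphi_G^t$, and conjugation invariance of $\|\cdot\|$ holds only for a fixed symplectomorphism. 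For instance, take $F$ autonomous and $G_t=F+\epsilon tk$. To first order in $\epsilon$, $\|F-G\|=\tfrac{\epsilon}{2}\mathrm{osc}(k)$, but $\|F\#\overline G\|\approx\epsilon\int_0^1\mathrm{osc}\big(\int_0^t k\circ\varphi_F^s\,ds\big)\,dt$, which is much smaller when $\varphi_F^s$ averages $k$.

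\emph{The naturality step is also wrong.} The claimed isomorphism $CF(L_0,G)\cong CF(L_0,G\#\overline G\#F)$ cannot hold: $G\#\overline G\#F=F$, and the generators $L_0\cap\varphi_G^{-1}(L_0)$ and $L_0\cap\varphi_F^{-1}(L_0)$ differ. So your main route only re-proves the bound with $\|\overline G\#F\|=\|F-G\|$, not the stated one.

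\textbf{The fallback is the right idea but not yet a proof.} You do not construct $F'$. Also, ``same time-one map'' alone does not give a componentwise statement, because a non-homotopic path can permute the classes $\eta$.

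It can be done exactly. Put
\[
F'_t=F_{1-t}\circ\varphi_F^{1-t},\qquad G'_t=G_{1-t}\circ\varphi_G^{1-t}.
\]
These generate $(\varphi_F^{1-t})^{-1}\varphi_F^1$ and $(\varphi_G^{1-t})^{-1}\varphi_G^1$. They are homotopic rel endpoints to $\varphi_F^t$ and $\varphi_G^t$ via $(s,t)\mapsto(\varphi_F^{s(1-t)})^{-1}\varphi_F^{s(1-t)+t}$ (and similarly for $G$). Moreover
\[
\|F'-G'\|=\int_0^1\mathrm{osc}\big(F_s\circ\varphi_F^s-G_s\circ\varphi_G^s\big)\,ds=\|F\#\overline G\|.
\]
Now apply your continuation estimate to $(F',G')$. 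Then use Lemma~\ref{lem:shift} to identify $CF_\eta(L_0,F')$ and $CF_\eta(L_0,G')$ with $CF_\eta(L_0,F)$ and $CF_\eta(L_0,G)$ up to filtration shifts. The homotopies rel endpoints keep each chord in the same class $\eta$, so these shifts are absorbed into $c_\eta$. No approximation or infimum is needed.
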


The following lemma is also well known;   see e.g.,~\cite{Us2,PS,Go}. 
\begin{lem}\label{lem:shift}
If $G\in\cH$ is another Hamiltonian such that $\varphi_G^{-1}(L_0)=\varphi_H^{-1}(L_0)$ and $HF(L_0,H)\neq 0$, then for every  $\eta\in\pi_0(\cP(L_0,L_0))$ there is a shift-isomorphism $\Psi$ from $CF_{\eta}(L_0,G)$ to $CF_{\eta}(L_0,H)$ restricting to a bijective map between $CF^{<a}_{\eta}(L_0,G)$ and $CF^{<a+c_\eta}_{\eta}(L_0,H)$ for any $a\in\R$ and some constant $ c_\eta=\mathcal{A}_{K}([\Psi\gamma_{\eta},v_{\Psi\gamma_{\eta}}])$, where $(\Psi\gamma_{\eta})(t)=\psi_t(\gamma_{\eta}(t))$,  $\psi_t=\varphi_H^t\circ(\varphi_G^t)^{-1}$, $v_{\Psi\gamma_\eta}:[0,1]^2\to M$ is a homotopy from $\gamma_{\Psi_*\eta}$ to $\Psi\gamma_{\eta}$ in $\Psi_*\eta$,  and $K\in\cH$ is given by
$K(t,x)=H(t,x)-G(t,\psi_t^{-1}(x))$.
\end{lem}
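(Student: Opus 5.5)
The plan is to build $\Psi$ by the classical ``change of gauge'' on path space and then check three things: chords go to chords, actions shift by a constant on each component, and Floer strips go to Floer strips.

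\emph{The isotopy and its generator.} Set $\psi_t=\varphi_H^t\circ(\varphi_G^t)^{-1}$. Then $\psi_0=id$, and the hypothesis $\varphi_G^{-1}(L_0)=\varphi_H^{-1}(L_0)$ gives $\psi_1(L_0)=L_0$. The isotopy $\psi_t$ is Hamiltonian. By the composition formula $\varphi_H^t=\psi_t\circ\varphi_G^t$, it is generated by $K(t,x)=H(t,x)-G(t,\psi_t^{-1}(x))$; equivalently $X_{H_t}=X_{K_t}+(\psi_t)_*X_{G_t}$.

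\emph{The map on paths.} Define $\Psi\colon\cP(L_0,L_0)\to\cP(L_0,L_0)$ by $(\Psi\gamma)(t)=\psi_t(\gamma(t))$. The endpoint conditions are preserved because $\psi_0=id$ and $\psi_1(L_0)=L_0$. Since $\Psi$ is a homeomorphism of path space, it permutes components via $\eta\mapsto\Psi_*\eta$. Differentiating $\psi_t(\gamma(t))$ and using the vector field identity, $\gamma$ is a $G$-chord if and only if $\Psi\gamma$ is an $H$-chord. So $\Psi$ restricts to a bijection $\cO_\eta(L_0,G)\to\cO_{\Psi_*\eta}(L_0,H)$.

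\emph{Lifting to cappings.} A capping $v$ of $\gamma$ is sent to the concatenation of $v_{\Psi\gamma_\eta}$ (the fixed homotopy from $\gamma_{\Psi_*\eta}$ to $\Psi\gamma_\eta$) with $(s,t)\mapsto\psi_t(v(s,t))$. This map respects the equivalence of cappings, because cylinders $C$ with boundary on $L_0$ satisfy $\omega(\Psi C)=\omega(C)$. That identity follows from Stokes' theorem: the correction terms $\int K$ cancel on the two boundary loops, and the $L_0$-edges contribute nothing since $L_0$ is Lagrangian and $\psi_1(L_0)=L_0$. The same cylinder argument shows the Maslov indices agree up to a constant fixed by the trivializations, which we choose compatibly along $\Psi$.

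\emph{Action identity.} I will show that $\ahl\circ\widetilde\Psi-\mathcal{A}_{L_0,G}$ is constant on $\widetilde{\cP}_\eta(L_0,L_0)$. First compute the differentials: by the formula $X_H=X_K+(\psi_t)_*X_G$ and $\psi_t^*\omega=\omega$, the first variations satisfy $d\ahl\circ d\widetilde\Psi=d\mathcal{A}_{L_0,G}$. Here the $K$-term appears as a total derivative whose boundary contributions vanish on $L_0$. Hence the difference is locally constant. Evaluating it at the reference path $\gamma_\eta$ with trivial capping gives exactly $c_\eta=\mathcal{A}_K([\Psi\gamma_\eta,v_{\Psi\gamma_\eta}])$. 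Extending $\Lambda$-linearly, and noting that the valuation is unchanged, $\Psi$ maps $CF^{<a}_\eta(L_0,G)$ bijectively onto $CF^{<a+c_\eta}_{\Psi_*\eta}(L_0,H)$. (With the identification of components used in the statement, this is the claimed filtered bijection.)

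\emph{Chain map.} Given $J_t$ used for $H$, use $J'_t=\psi_t^*J_t$ for $G$. A direct computation shows that $u$ solves the $(G,J')$ Floer equation with boundary on $L_0$ if and only if $\psi_t(u(s,t))$ solves the $(H,J)$ equation with boundary on $L_0$. Regularity and index are preserved, so $\Psi$ intertwines $d_{G,J'}$ and $d_{H,J}$. The recapping weights $T^{\omega(v)}$ match by the cylinder-area identity above. Therefore $\Psi$ is a filtered chain isomorphism up to the shift $c_\eta$.

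The main obstacle is the bookkeeping in the action-identity step: the precise sign and the constant must be verified by an explicit Stokes computation on the strip $[0,1]^2$ swept out by $\psi$. The hypothesis $HF\neq0$ plays no role in constructing $\Psi$. It is only used afterwards, when the shift is invoked, to pin the constant down via spectral invariants; I would remark on this rather than use it.
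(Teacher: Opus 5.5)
The paper gives no proof of this lemma; it cites \cite{Us2,PS,Go}. Your construction of $\Psi$, the capping lift, the action computation and the chain-map argument with $J'_t=\psi_t^*J_t$ are the standard argument, and they are correct. In particular, evaluating at $[\gamma_\eta,\text{trivial capping}]$ gives exactly $c_\eta=\mathcal{A}_K([\Psi\gamma_\eta,v_{\Psi\gamma_\eta}])$, with the sign as stated. A cleaner route to both the capping compatibility and the constant is the pointwise identity $(\psi\circ v)^*\omega=v^*\omega+\partial_s\big(K_t\circ\psi_t\circ v\big)\,ds\wedge dt$, integrated in $s$. This needs no Stokes argument on the $L_0$-edges, and in the first variation the $K$-terms cancel pointwise rather than as a boundary term.

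\textbf{The gap is in the components.} What you actually prove is a filtered isomorphism $CF_\eta(L_0,G)\to CF_{\Psi_*\eta}(L_0,H)$. The lemma asserts the target is $CF_\eta(L_0,H)$, i.e.\ $\Psi_*\eta=\eta$, and your parenthetical ``with the identification of components used in the statement'' does not supply this. It can fail without the hypothesis. Take a small circle $L_0\subset T^*S^1$ bounding a disc, $G$ $C^1$-small, and $\psi_t$ the rotation $(q,p)\mapsto(q+t,p)$ on $\{|p|\le1\}$, cut off outside. Let $H$ generate $\psi_t\circ\varphi_G^t$. Then all $G$-chords lie in $pt$ while all $H$-chords wind once, so $CF_{pt}(L_0,H)=0\neq CF_{pt}(L_0,G)$.

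This is exactly where $HF(L_0,H)\neq0$ is used. It is not used to pin down the constant, since your computation already determines $c_\eta$. The missing step has two parts:
\begin{itemize}
\item[(i)] The homotopy $(\tau,t)\mapsto\psi_{\tau t}(\gamma(t))$ shows $\Psi\gamma\simeq\gamma\ast\sigma_{\gamma(1)}$ in $\mathcal{P}(L_0,L_0)$, where $\sigma_q(t)=\psi_t(q)=\Psi(\mathrm{const}_q)(t)$. If $\sigma_q$ lies in $pt$, contract it through paths with ends on $L_0$, inserting the trace of its initial point, which stays in $L_0$. This gives $\gamma\ast\sigma_q\simeq\gamma$. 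Hence $\Psi_*pt=pt$ implies $\Psi_*\eta=\eta$ for every $\eta$.
\item[(ii)] $\Psi$ is a chain isomorphism $CF_{pt}(L_0,G)\cong CF_{\Psi_*pt}(L_0,H)$, and continuation maps preserve components. So $HF_{\Psi_*pt}(L_0,L_0)\cong HF_{pt}(L_0,L_0)\neq0$. Since $HF_\eta(L_0,L_0)=0$ for $\eta\neq pt$, this forces $\Psi_*pt=pt$.
\end{itemize}
This is the Lagrangian analogue of the fact that orbits of Hamiltonian loops are contractible.
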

 
 \begin{rmk}\label{rmk:shift}
A direct consequence of Lemma~\ref{lem:shift} is that for each $\eta\in\pi_0(\cP(L_0,L_0))$, 
$$\mathcal{B}_\eta(L_0,H)[-c_\eta]=\{([a-c_\eta],L)|\;([a],L)\in \mathcal{B}_\eta(L_0,H)\}$$ 
defines a barcode for $CF_{\eta}(L_0,G)$. Therefore, bar length and the number of bars of $\mathcal{B}_\eta(L_0,H)$ are independent of the choice of  Hamiltonians $H$ satisfying $L_1=\varphi_H^{-1}(L_0)$ provided that $HF(L_0,L_0)\neq 0$.
\end{rmk}

\section{Persistent entropy and stability results}
In this section we introduce the persistent entropy of Floer persistence barcodes of  two Hamiltonian isotopic monotone Lagrangian submanifolds. We also present two stability results on Shannon entropy of persistence barcodes via Hofer distance.

\subsection{Persistent entropy}
\label{sec:persistent}

Given a barcode $\mathcal{B}=\{([a_i],l_i)|\;1\leq i\leq n\}$ with each bar $([a_i],l_i)\in(\R/\Gamma)\times (0,\infty)$, we define the \textit{Shannon entropy} of $\mathcal{B}$ as
\[
E(\mathcal{B})=-\sum_{i=1}^np_i\log p_i
\]
where $p_i=l_i/L$ and $L=\sum_{i=1}^nl_i$.  It is well-known that the entropy is maximized when all probabilities are equal, i.e., $p_i=1/n$, see~\cite[Appendix~7.1]{MNB}. Hence, 
\begin{equation}\label{e:max}
E(\mathcal{B})\leq \log n.
\end{equation}

The reverse estimate is false without controlling the bar lengths. The following elementary example explains the mechanism behind Question~\ref{qtn:liouville-equality}.

\begin{ex}\label{ex:algebraic-gap}
Fix $0<a<1$ and, for $m\in\N$, set
\[
\mathcal B_m=\{(0,2^m]\}\sqcup\{(0,1]\}^{\lfloor 2^{am}\rfloor}.
\]
Let $N_m=\lfloor 2^{am}\rfloor$, $A_m=2^m$, and $S_m=A_m+N_m$. For every $0<\epsilon<1$, all bars of $\mathcal B_m$ have length larger than $\epsilon$, and hence
\[
\lim_{m\to\infty}\frac{1}{m}\log |\mathcal B_m^\epsilon|
=
\lim_{m\to\infty}\frac{1}{m}\log (N_m+1)=a.
\]
We now prove that the Shannon-entropy growth is zero. The normalized length distribution consists of one weight $A_m/S_m$ and $N_m$ weights $1/S_m$. Put
\[
q_m=\frac{N_m}{S_m}.
\]
Then the total weight carried by the short bars is $q_m$, and these $N_m$ weights are equal. Therefore
\[
E(\mathcal B_m)=h(q_m)+q_m\log N_m,
\]
where $h(q)=-q\log q-(1-q)\log(1-q)$ is the binary entropy. Since
\[
q_m=\frac{N_m}{2^m+N_m}\leq 2^{-(1-a)m},
\qquad
\log N_m\leq am,
\]
we have
\[
q_m\log N_m\leq am\,2^{-(1-a)m}\to0.
\]
Moreover, for $m$ large enough $q_m\leq1/2$, and the elementary estimate
$h(q)\leq q\log(1/q)+2q$ gives
\[
h(q_m)\leq C m\,2^{-(1-a)m}
\]
for some constant $C>0$. Consequently
\[
\frac{E(\mathcal B_m)}{m}\to0.
\]
Thus exponential bar-count growth and Shannon-entropy growth can be strictly different for abstract barcode families.
\end{ex}

Assume first that $L_0$ and $L_1=\varphi_H^{-1}(L_0)$ intersect transversely. Given $\epsilon>0$, we define
\[
E^\epsilon(L_0,L_1):=E(\mathcal{F}\mathcal{B}^\epsilon(L_0,H)).
\]
Here we set $E^\epsilon(L_0,L_1)=0$ provided that $\mathcal{F}\mathcal{B}^\epsilon(L_0,H)=\emptyset$. In case that $\epsilon=0$, we set
\[
E(L_0,L_1):=E(\mathcal{F}\mathcal{B}(L_0,H)).
\]
By Remark~\ref{rmk:shift}, when $HF_*(L_0,H)\neq 0$, the entropy $E^\epsilon(L_0,L_1)$ does not depend on the choice of Hamiltonians $H$ such that $L_1=\varphi^{-1}_H(L_0)$ and $L_0\pitchfork L_1$. 

We now extend the Shannon entropy $E^\epsilon(L_0,L_1)$ to the situation where $L_0$ and $L_1$ need not  be transverse by setting
\begin{equation}\label{e:L'toL_1}
E^\epsilon(L_0,L_1)=\liminf_{L'\to L_1}E^\epsilon(L_0,L')
\end{equation}
where the limit is taken over Lagrangian submanifolds $L'$ which intersect $L_0$ transversely and are Hamiltonian isotopic to $L_1$, and converge to $L_1$ in the $C^\infty$-topology (and hence $\delta_H(L_1,L')\to 0$).

Let $K\in\cH$ be a Hamiltonian function generating $\varphi=\varphi_K^1$. Then the function
\[
K^{\sharp n}=\sum_{i=0}^{n-1}K_t\circ(\varphi_K^t)^{-i}
\]
generates the composition map $\varphi^n$, and $\overline{K}^{\sharp n}$ generates $\varphi^{-n}:=(\varphi^{-1})^n$.

\begin{df}[Relative Persistent Entropy]\label{df:relper} 
The \textit{$\epsilon$-persistent entropy of $\varphi$ relative to the pair $(L_0,L_1)$} is
\[
\ph^\epsilon(\varphi;L_0,L_1)=\limsup_{n\to\infty}\frac{E^\epsilon(L_0,\varphi^{-n}(L_1 ))}{n},
\]  
and the \textit{persistent entropy of $\varphi$ relative to $(L_0,L_1)$} is 
\[
\ph(\varphi;L_0,L_1)=\limsup_{\epsilon \searrow 0}\ph^\epsilon(\varphi;L_0,L_1)\in [0,\infty].
\]
\end{df}
Note that if $L_0\pitchfork \varphi^{-n}(L_1)$ then  $E^\epsilon(L_0,\varphi^{-n}(L_1))=E(\mathcal{F}\mathcal{B}^\epsilon(L_0,H\#K^{\sharp n}))$. 
A priori, $\ph^\epsilon(\varphi;L_0,L_1)$ could be infinite, and so does $\ph(\varphi;L_0,L_1)$. 

\begin{df}[Relative Slow Persistent Entropy]\label{df:relsper} 
The \textit{slow $\epsilon$-persistent entropy of $\varphi$ relative to the pair $(L_0,L_1)$} is
\[
\sph^\epsilon(\varphi;L_0,L_1)=\limsup_{n\to\infty}\frac{E^\epsilon(L_0,\varphi^{-n}(L_1 ))}{\log n},
\]  
and the \textit{slow persistent entropy of $\varphi$ relative to $(L_0,L_1)$} is 
\[
\sph(\varphi;L_0,L_1)=\limsup_{\epsilon\searrow0}\sph^\epsilon(\varphi;L_0,L_1)\in [0,\infty].
\]
\end{df}

\subsection{Stability of Shannon entropy}
In this subsection we record the following stability result for Shannon entropy of Floer persistence barcodes, although it is not used for the main results---mainly we are including it in order to relate Shannon entropy to Hofer geometry. 

\begin{thm}\label{thm:E-stab}
Let $L_1=\varphi^{-1}_{H_1}(L_0)$ and $L_2=\varphi^{-1}_{H_2}(L_0)$ be two  Lagrangian submanifolds of a tame symplectic manifold $(M,\omega)$ which are Hamiltonian isotopic to a closed monotone Lagrangian submanifold $L_0$  with $N_{L_0}\geq 2$ and intersect $L_0$ transversely. If $\delta_H(L_1,L_2)\leq \frac{L_{\rm max}}{8n}$, then 
\[
|E(L_0,L_1)-E(L_0,L_2)|\leq \frac{4n \delta_H(L_1,L_2)}{L_{\rm max}}\bigg(\log n-\log\bigg( \frac{4n \delta_H(L_1,L_2)}{L_{\rm max}}\bigg)\bigg)
\]
where $n=|\mathcal{FB}(L_0,H_1)|+|\mathcal{FB}(L_0,H_2)|$,  and for $i=1,2$, $L_i$ denotes the sum of lengths of all bars in $\mathcal{FB}(L_0,H_i)$ and $L_{\rm max}=\max\{L_1,L_2\}$.  
\end{thm}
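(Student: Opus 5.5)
The plan is to reduce the statement to the abstract stability result, Theorem~\ref{thm:stability}, applied to the finite-bar barcodes $\mathcal{FB}(L_0,H_1)$ and $\mathcal{FB}(L_0,H_2)$. Two ingredients are needed. First, the Shannon entropy $E$ depends only on the multiset of bar lengths. Second, we need a bottleneck-type bound, with constant $\delta_H(L_1,L_2)$, between the length multisets of these finite barcodes.

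\emph{Step 1: choice of Hamiltonians.} Given any $\varphi\in\ham(M,\omega)$ with $\varphi(L_1)=L_2$, we can write $L_2=\varphi_G^{-1}(L_0)$ with $G=H_1\#K$, where $K$ generates $\varphi^{-1}$ up to the convention. By Remark~\ref{rmk:shift}, the bar lengths of $\mathcal{B}(L_0,G)$ and $\mathcal{B}(L_0,H_2)$ coincide, so we may replace $H_2$ by $G$. Then $\|H_1\#\overline{G}\|$ is controlled by $\|K\|$. Taking the infimum over $\varphi$ and over generating $K$ gives Hamiltonians whose difference $\|H_1\#\overline{G}\|$ is at most $\delta_H(L_1,L_2)+\varepsilon'$, for arbitrary $\varepsilon'>0$.

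\emph{Step 2: comparison via Lemma~\ref{lem:stability}.} For each $\eta$, Lemma~\ref{lem:stability} gives
\[
d_B\big(\mathcal{B}_\eta(L_0,H_1)[c_\eta],\mathcal{B}_\eta(L_0,G)\big)\leq \delta_H(L_1,L_2)+\varepsilon'.
\]
A $\delta$-matching pairs infinite bars with infinite bars. Finite bars are either matched to finite bars, with lengths differing by at most $2\delta$, or deleted, having length at most $2\delta$, or at most $\delta$ in the convention of Section~\ref{sec:floer}. Shifts and the $\R/\Gamma$ ambiguity do not affect lengths.

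Next I would lift $\mathcal{FB}(L_0,H_i)$ to honest barcodes in $\mathbb{B}_F$ by taking, for each bar $([a],l)$, a representative interval $(a,a+l]$, with representatives chosen compatibly with the matching. Taking the disjoint union over $\eta$, with distinct $\eta$ placed far apart, the matched lifts then satisfy $d_B\leq\delta_H(L_1,L_2)+\varepsilon'$ in the sense of Section~\ref{sec:persist}. Only finitely many $\eta$ contribute, since the intersection is transverse.

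\emph{Step 3: conclusion.} Apply Theorem~\ref{thm:stability} to the lifted barcodes, whose numbers of bars and total lengths are $n$ and $L_{\max}$ as in the statement. The function $t\mapsto t(\log n-\log t)$ is increasing for small $t$, in particular for $t\le 1/2$, which is exactly the regime enforced by the hypothesis $\delta_H\le L_{\max}/8n$. Letting $\varepsilon'\to0$ gives the stated bound. If the hypothesis holds with equality, a small perturbation keeps us within the admissible range of Theorem~\ref{thm:stability}; otherwise it suffices to take $\varepsilon'$ small.

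\emph{Expected obstacle.} I expect the main difficulty to be Step 2: turning the $\R/\Gamma$-valued bottleneck matching into a genuine $\mathbb{B}_F$ matching. This requires care with representative choices when $\Gamma\neq0$ and with the differing deletion thresholds ($\delta$ versus $2\delta$) in the two definitions. Since $E$ sees only lengths, I would handle this by building the lift directly from the matching, placing each matched pair at a common left endpoint and each unmatched bar at its own isolated spot.
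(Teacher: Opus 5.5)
Your proposal is correct and follows the paper's argument, which simply asserts that the theorem follows from Lemma~\ref{lem:stability}, Theorem~\ref{thm:stability} and the monotonicity of $x(\log n-\log x)$; you supply details the paper leaves implicit, namely optimizing over Hamiltonians to reach $\delta_H(L_1,L_2)$, restricting the matching to finite bars, and lifting from $\R/\Gamma$.

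One correction to your lifting: placing a matched pair of lengths $S,T$ at a common left endpoint makes the right endpoints differ by $|S-T|$, which can be as large as $2\delta$ and would double the bottleneck distance. Instead, center every bar as $(-l/2,l/2]$, so that matched pairs have endpoint gap $|S-T|/2\le\delta$; because these centered lifts do not depend on the chosen Hamiltonian $G$, you get $d_B\le\delta_H(L_1,L_2)$ directly, which also settles the equality case $\delta_H=L_{\max}/8n$ without any perturbation.
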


From the fact that $x(\log n-\log x)$ is increasing as long as $0<x\leq \frac{n}{2}$, one can see that 
Theorem~\ref{thm:E-stab} is a direct consequence of Theorem~\ref{thm:stability} and Lemma~\ref{lem:stability}. Similarly, we have the following

\begin{thm}\label{thm:E-stab'}
Let $\varphi_1=\varphi^{1}_{H_1}$ and $\varphi_2=\varphi^{1}_{H_2}$ be two nondegenerate Hamiltonian diffeomorphisms of a closed monotone symplectic manifold $(M,\omega)$. If $d_H(\varphi_1,\varphi_2)\leq \frac{L_{\rm max}}{8n}$, then 
\[
|E(\varphi_1)-E(\varphi_2)|\leq \frac{4n d_H(\varphi_1,\varphi_2)}{L_{\rm max}}\bigg(\log n-\log\bigg( \frac{4n d_H(\varphi_1,\varphi_2)}{L_{\rm max}}\bigg)\bigg)
\]
where $n=|\mathcal{FB}(H_1)|+|\mathcal{FB}(H_2)|$, and for $i=1,2$, $L_i$ denotes the sum of lengths of all bars in $\mathcal{FB}(H_i)$ and $L_{\rm max}=\max\{L_1,L_2\}$.  
\end{thm}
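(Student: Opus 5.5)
The argument mirrors the one for Theorem~\ref{thm:E-stab}: it combines the abstract stability of Shannon entropy (Theorem~\ref{thm:stability}) with the Hofer-continuity of Hamiltonian Floer barcodes.

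First, I pass from the Hamiltonian Floer complex to the Lagrangian setting. By the identity \eqref{e:H=L}, the barcode $\mathcal B(H_i)$ of the Hamiltonian Floer complex coincides, up to shifts on each free homotopy class, with the Lagrangian Floer barcode of the pair $(\Delta,(id\times\varphi_i)^{-1}(\Delta))$ in $(M\times M,-\omega\oplus\omega)$. The diagonal is a closed monotone Lagrangian with $N_\Delta\geq 2$, because $M$ is closed monotone. Nondegeneracy of $\varphi_i$ is exactly transversality of the graph to $\Delta$, so $\mathcal{FB}(H_i)$ is well defined without any limiting procedure. One could also argue directly with $CF_*(H)$; the same two ingredients apply verbatim.

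Second, I need the Hamiltonian analogue of Lemma~\ref{lem:stability}. For each free homotopy class $\eta$ there is a constant $c_\eta$ such that
\[
d_B(\mathcal B_\eta(H_1)[c_\eta],\mathcal B_\eta(H_2))\leq \|H_1\#\overline{H_2}\|.
\]
This is the standard continuation-map interleaving. The shift $c_\eta$ plays no role, since the Shannon entropy depends only on bar lengths. Infimizing over Hamiltonians $G$ with $\varphi_G^1=\varphi_1^{-1}\varphi_2$ requires Remark~\ref{rmk:shift}: changing the Hamiltonian while fixing its time-one map changes each $\mathcal B_\eta$ only by a shift. Hence the bound holds with $d_H(\varphi_1,\varphi_2)$ in place of $\|H_1\#\overline{H_2}\|$. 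Only finite bars matter for entropy, so I record the consequence for finite bars. A $\delta$-matching pairs each infinite bar with an infinite bar. Therefore the restriction to finite bars gives a $\delta$-matching of $\mathcal{FB}(H_1)$ with $\mathcal{FB}(H_2)$, after shifting by $c_\eta$ componentwise. It follows that $d_B(\mathcal{FB}(H_1),\mathcal{FB}(H_2))\leq d_H(\varphi_1,\varphi_2)$ in the sense of the $(\R/\Gamma)\times[0,\infty]$ bottleneck distance.

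Third, I apply Theorem~\ref{thm:stability} to the length multisets. That theorem is stated for barcodes with $\Gamma=0$, but its proof uses only the matching of lengths; the matching above changes each length by at most $2\delta$, and lengths are well defined. So it transfers to finite multisets of lengths. This gives the displayed inequality with $d_B$ in place of $d_H$, and the hypothesis $d_B\leq d_H\leq L_{\max}/(8n)$ holds. Finally, $x\mapsto x(\log n-\log x)$ is increasing for $0<x\leq n/2$, and $x=4n\,d_B/L_{\max}\leq 1/2$. Hence replacing $d_B$ by the larger $d_H$ only increases the right side.

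The main obstacle I expect is the second step. I must check that the interleaving constant is the Hofer distance between the time-one maps, not merely the Hofer norm of a chosen pair of Hamiltonians. I must also check that the componentwise shifts by $c_\eta$ and the Novikov quotient $\R/\Gamma$ do not spoil the matching of lengths. I would handle this exactly as in \cite{UZ,KS}, citing Lemma~\ref{lem:stability} for the diagonal pair.
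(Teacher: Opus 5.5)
Your proposal is correct and follows the paper's route. The paper obtains the theorem in one line from Theorem~\ref{thm:stability}, the Hofer stability of Floer barcodes (Lemma~\ref{lem:stability} in its Hamiltonian form, with Remark~\ref{rmk:shift} to pass from $\|H_1\#\overline{H_2}\|$ to $d_H(\varphi_1,\varphi_2)$), and the monotonicity of $x(\log n-\log x)$. Your restriction to finite bars and your transfer from $(\R/\Gamma)\times[0,\infty]$ to length multisets spell out what the paper leaves implicit.

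One small correction, which the paper shares: $x(\log n-\log x)$ is increasing only on $(0,n/e]$, not on $(0,n/2]$. This is harmless, because for $n\geq 2$ you only need monotonicity on $(0,1/2]\subset(0,n/e]$, and for $n=1$ both entropies vanish.
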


\begin{rmk}\label{rmk:stability}
 Assume that Lagrangian submanifolds $L'=\varphi^{-1}_H(L_0)$ which intersect $L_0$ transversely and are Hamiltonian isotopic to $L_1$, and converge to $L_1$ in $C^\infty$-topology. Let $\mathcal{D}_\eta(L_0,L_1)\subset\R$ be the set consisting of all action differences between the capped
Hamiltonian paths in $\eta\in\pi_0(\cP(L_0,L_0))$ (cf.~\cite[Sect.3.2.3]{CGG}). Denote by $\bar{\mathcal{D}}_\eta(L_0,L_1)$  the closure of $\mathcal{D}_\eta(L_0,L_1)$ in $
 \R$, and let $\bar{\mathcal{D}}(L_0,L_1)=\cup_\eta\bar{\mathcal{D}}_\eta(L_0,L_1)$.
 
Fix $\epsilon>0$ with $\epsilon\notin \bar{\mathcal{D}}(L_0,L_1)$.
Set $b^\epsilon(L_0,L')=|\mathcal{B}^\epsilon (L_0,H)|$. In case that $L_0$ and $L_1$ are not transverse, we set
\begin{equation}\label{e:limbar}
b^\epsilon(L_0,L_1)=\liminf_{L'\to L_1} b^{\epsilon}(L_0,L').
\end{equation}
Note that regardless of whether $L_0$ and $L'$ are transverse or not,  we have
\[b^{\epsilon+\delta}(L_0,L')\leq b^\epsilon(L_0,L_1)\leq b^{\epsilon-\delta}(L_0,L')\]
whenever $\delta_H(L_1,L')<\delta/2$ (cf. the equality (4.3) in~\cite{CGG}), from which we see that $N:=|\mathcal{FB}^\epsilon(L_0,H)|\in\N$  stabilizes before the  limit~(\ref{e:limbar}). Denote by $\ell_\epsilon(L_0,L')$ the sum of lengths of all bars in $\mathcal{FB}^\epsilon(L_0,H)$. By the definition of the bottleneck distance, if $\delta_H(L_1,L')<\delta/2$ then $\ell_{\epsilon-\delta}(L_0,L')\geq \ell_{\epsilon}(L_0,L_1)-2\delta N$ (One can define the barcode of the Floer complex for  $(L_0,L_1)$ essentially by continuity in the case that the $L_0,L_1$ are not necessarily transverse,  see~page 4 of \cite{CGG} for the discussion of this point,  and hence define the quantity $\ell_{\epsilon}(L_0,L_1)$). Then, we see that  if $0< \delta<\frac{\ell_{\epsilon}(L_0,L_1)}{4N}$ 
then $\ell_{\epsilon-\delta}(L_0,L')\geq C:=\ell_{\epsilon}(L_0,L_1)/2$. Hence, $$\max\{\ell_{\epsilon}(L_0,L'),\ell_{\epsilon}(L_0,L'')\}\geq C$$
whenever $\delta_H(L_1,L'),\delta_H(L_1,L'')<\delta/2$ and $L',L''\pitchfork L_0$. Consequently, according to Theorem~\ref{thm:E-stab}, we see that $E^\epsilon(L_0,L')$ is uniformly continuous with respect to $L'$ in the Hofer distance provided that $\delta_H(L_1,L')$ is small enough. Therefore, different choices of the sequences of Lagrangian submanifolds $L'(n)$ which converges to $L_1$ in $C^\infty$-topology will result in the same limit as (\ref{e:L'toL_1}).
\end{rmk}

\begin{rmk}
One may replace the Hofer distance in Theorems~\ref{thm:E-stab} and \ref{thm:E-stab'} with the spectral metric $\gamma$ (cf.~\cite{KS})  provided that $HF(L_0,L_0)\neq 0$ since $\gamma$ bound the bottleneck distance $d_B$ from above.
\end{rmk}

\section{Barcode entropy vs. persistent entropy for Hamiltonian diffeomorphisms}\label{sec:pervsbar}

Before giving our main results on  the (relative) persistent entropy, let us recall the definition of barcode entropy given by Çineli-Ginzburg-G\"{u}rel~\cite[Def.~4.1]{CGG}. 

In the notation and conventions from
Sections~\ref{sec:floer} and \ref{sec:persistent}, given $\epsilon>0$,  when $L_0\pitchfork L_1$ with $L_1=\varphi_H^{-n}(L_0)$,  
 we set 
 \[
 b^\epsilon(L_0,L_1)=|\mathcal{B}^\epsilon (L_0,H)|
 \]
(this number depends only on the Lagrangian submanifold $L_1$ not on $H$), otherwise, we let
\[
b^\epsilon(L_0,L_1)=\liminf_{L'\to L_1} b^\epsilon(L_0,L')\]
where the limit is taken over Lagrangian submanifolds $L'$ which intersect $L_0$ transversely and are Hamiltonian isotopic to $L_1$, and converge to $L_1$ in the $C^\infty$-topology. 

\begin{df}\label{df:barEntropy}
The \textit{barcode entropy $\bh(\varphi;L_0,L_1)$ of $\varphi\in\ham(M,\omega)$ relative to $(L_0,L_1)$} is defined as 
\[
\bh(\varphi;L_0,L_1)=\limsup_{\epsilon \searrow 0}
\bh^\epsilon(\varphi;L_0,L_1)
\]
where $\bh^\epsilon(\varphi;L_0,L_1)$ is the \textit{$\epsilon$-barcode entropy}  given by 
\[
\bh^\epsilon(\varphi;L_0,L_1)=\limsup_{n\to\infty}\frac{\log^+ b^\epsilon(L_0,\varphi^{-n}(L_1))}{n}
\]
with $\log^+=\max\{\log,0\}$. 

The \textit{slow barcode entropy $\sbh(\varphi;L_0,L_1)$ of $\varphi\in\ham(M,\omega)$ relative to $(L_0,L_1)$} is defined by 
\[
\sbh(\varphi;L_0,L_1)=\limsup_{\epsilon \searrow 0}
\sbh^\epsilon(\varphi;L_0,L_1)\]
where
\[\sbh^\epsilon(\varphi;L_0,L_1)=\limsup_{n\to\infty}\frac{\log^+ b^\epsilon(L_0,\varphi^{-n}(L_1))}{\log n}.
\]

\end{df}

We remark here that $\bh^\epsilon(\varphi;L_0,L_1)$ (resp. $\sbh^\epsilon(\varphi;L_0,L_1)$) is increasing as $\epsilon \searrow 0$. Hence, one can replace $\lim_{\epsilon \searrow 0}$ with $\limsup_{\epsilon \searrow 0}$ in the definition of $\bh(\varphi;L_0,L_1)$ (resp. $\sbh(\varphi;L_0,L_1)$). 
We now prove the first result of this paper, i.e., Theorem~\ref{thm:Lpervsbar}.

\noindent\textbf{Proof of Theorem~\ref{thm:Lpervsbar}.} 
Assume first that $L_0$ and $L'=\varphi_H^{-1}(L_0)$  intersect transversely. Without loss generality, for $\epsilon>0$ we may assume that $\mathcal{F}\mathcal{B}^\epsilon(L_0,H)\neq \emptyset$.  Then by definition we have $E^\epsilon(L_0,L')=E(\mathcal{F}\mathcal{B}^\epsilon(L_0,H))$. 
Set $$\mathcal{F}\mathcal{B}^\epsilon(L_0,H)=\{([a_i],l_i)\in\R/\Gamma\times [0,\infty]\;|\;\epsilon <l_i<\infty,\;1\leq i\leq k\}.$$ 
On the one hand, by (\ref{e:shortbar}) and (\ref{e:max}), we have
\begin{eqnarray}\label{e:upbd}
E(\mathcal{F}\mathcal{B}^\epsilon(L_0,H))&\leq& \log |\mathcal{F}\mathcal{B}^\epsilon(L_0,H)|\notag\\
&=&\log(|\mathcal{B}^\epsilon(L_0,H)|-\dim_\Lambda HF_*(L_0,L_0))\notag\\
&\leq&\log |\mathcal{B}^\epsilon(L_0,H)|.
\end{eqnarray}
On the other hand, from the definition of Shannon entropy we see that 
\[
E(\mathcal{F}\mathcal{B}^\epsilon(L_0,H))=-\sum_{i=1}^k\frac{l_i}{S_k}\log \frac{l_i}{S_k}
=\log S_k-\sum_{i=1}^k\frac{l_i}{S_k}\log l_i
\]
where $S_k=\sum_{i=1}^k l_i$. Let $A_k=S_k/k$. Then $A_k,l_i\in (\epsilon,\beta(L_0,L')]$. Hence, 
\begin{eqnarray}\label{e:lowbd}
E(\mathcal{F}\mathcal{B}^\epsilon(L_0,H))&=&\log k+\log A_k-\frac{1}{S_k}\sum_{i=1}^kl_i\log l_i
\notag\\
&\geq &\log k + \log \epsilon-\frac{1}{S_k}\sum_{i=1}^k l_i\cdot\max\{\log x\;|\;x\in [\epsilon,\beta(L_0,L')]\}
\notag\\
&\geq &\log k+\log\epsilon-\max\{\log x\;|\;x\in [\epsilon,\beta(L_0,L')]\}
\end{eqnarray}
where the fact that the function $f(x)=\log x$ is increasing on $(0,\infty)$ is used in the second inequality.
Taking limit over Lagrangian submanifolds $L'\pitchfork L_0$ which are Hamiltonian isotopic to $\varphi^{-n}(L_1)$, and converge to $\varphi^{-n}(L_1)$ in the $C^\infty$-topology, from  (\ref{e:upbd}) and (\ref{e:lowbd}) we see that 
\[
E^\epsilon(L_0,\varphi^{-n}(L_1))\leq \log b^\epsilon(L_0,\varphi^{-n}(L_1)), 
\]
\[
E^\epsilon(L_0,\varphi^{-n}(L_1))\geq\log (b^\epsilon(L_0,\varphi^{-n}(L_1))-\dim_\Lambda HF(L_0,L_0)) +\log \epsilon-\max\limits_{x\in [\epsilon,\beta_n]}\log x,
\]
where $\beta_n:=\beta(L_0,\varphi^{-n}(L_1))$.
Hence, 
\begin{equation}\label{e:eupbd}
\ph^\epsilon(\varphi;L_0,L_1)\leq \bh^\epsilon(\varphi;L_0,L_1),
\end{equation}
\begin{equation}\label{e:eSupbd}
\sph^\epsilon(\varphi;L_0,L_1)\leq \sbh^\epsilon(\varphi;L_0,L_1),
\end{equation}

\begin{equation}\label{e:eplowbd}
\ph^\epsilon(\varphi;L_0,L_1)\geq \bh^\epsilon(\varphi;L_0,L_1)-\limsup_{n\to \infty} \max\limits_{x\in [\epsilon,\beta_n]}\frac{\log x}{n},
\end{equation}
\begin{equation}\label{e:eSplowbd}
\sph^\epsilon(\varphi;L_0,L_1)\geq \sbh^\epsilon(\varphi;L_0,L_1)-\limsup_{n\to \infty} \max\limits_{x\in [\epsilon,\beta_n]}\frac{\log x}{\log n}
\end{equation}
where in (\ref{e:eplowbd})  and (\ref{e:eSplowbd}) we have used  $\dim_\Lambda HF_*(L_0,L_0)<\infty$.
Note that the boundary depths $\beta_n$ grow at most linearly with the iteration $n$;  see Remark~\ref{rem:lingrowth}.   The second term in (\ref{e:eplowbd}) would vanish. So we have
\[
\ph^\epsilon(\varphi;L_0,L_1)\geq \bh^\epsilon(\varphi;L_0,L_1)
\]
which, together with the inequality (\ref{e:eupbd}), gives rise to (\ref{e:perbar}).  By (\ref{e:eSupbd}), we have the desired inequality  $(\ref{e:sperbar})$.  

Assume now that $\limsup_{n\to\infty }\frac{\log \beta_n}{\log n}=0$. We discuss in two cases: (1) if $\beta_n\leq C$ for some constant, since the function $f(x)=\log x$ is continuous on $[\epsilon,C]$ (and hence has an upper bound possibly depending on $\epsilon$), we have  
\[
\lim_{n\to\infty} \max\limits_{x\in [\epsilon,\beta_n]}\frac{\log x}{\log n}=0; 
\]
(2) if $\beta_n$ is unbounded, then we have
\[
\limsup_{n\to\infty} \max\limits_{x\in [\epsilon,\beta_n]}\frac{\log x}{\log n}=\limsup_{n\to\infty}\frac{\log 
\beta_n}{\log n}=0.
\]
Therefore, (\ref{e:eSplowbd}) gives rise to the equality
\[
\sph^\epsilon(\varphi;L_0,L_1)\geq \sbh^\epsilon(\varphi;L_0,L_1)
\]
concluding that the equality in (\ref{e:sperbar}) holds. \qed

According to Theorem~\ref{thm:Lpervsbar}, the relative barcode entropy $\bh(\varphi;L_0,L_1)$ and the relative persistent entropy $\ph(\varphi;L_0,L_1)$ are equal for any $\varphi\in\ham(M,\omega)$. Then by Proposition 4.4 in~\cite{CGG}, 
we have the following basic properties of the persistent entropy. 
\begin{prop}\label{prop:property}
The (relative) persistent entropy satisfies the following properties:

\begin{itemize}
\item[{\rm (i)}]  For each $k\in\N$, we have $\ph(\varphi^k;L_0,L_1)\leq k\ph(\varphi;L_0,L_1)$ . In particular,  $\ph(\varphi^k)\leq k\ph(\varphi)$.
\item[{\rm (ii)}] The invariant $\ph(\varphi;L_0,L_1)$ is equivariant with respect to the action of the symplectic diffeomorphism group $\symp(M,\omega)$ on $\ham(M,\omega)$ by conjugation. That is, given $\psi\in \symp(M,\omega)$,  we have
$$\ph(\psi\varphi\psi^{-1};\psi(L_0),\psi(L_1))=\ph(\varphi;L_0,L_1).$$
In particular, $\ph(\psi\varphi\psi^{-1})=\ph(\varphi)$.
\item[{\rm (iii)}] For each $\varphi\in\ham(M,\omega)$, $\ph(\varphi;L_0,L_1)=\ph(\varphi^{-1};L_0,L_1)$. In particular, $\ph(\varphi^{-1})=\ph(\varphi)$.
\item[{\rm (iv)}]  Let $L_0$ and $L_0'$ be monotone Lagrangian submanifolds of $M$ and $M'$ respectively. Let $L_1=\varphi_H^{-1}(L_0)$ and $L_1'=\varphi_{H'}^{-1}(L_0')$ for two Hamiltonians $H\in C_c^\infty([0,1]\times M,\R)$ and $H'\in C_c^\infty([0,1]\times M',\R)$. Then for any two Hamiltonian diffeomorphisms $\varphi_0: M\to M$ and $\varphi_1: M'\to M'$, we have
\[
\ph(\varphi_0\times\varphi_1;L_0\times L_0',L_1\times L_1')\leq \ph(\varphi_0;L_0,L_1)+ \ph(\varphi_1;L_0',L_1'), 
\]
and in particular, $\ph(\varphi_0\times\varphi_1)\leq \ph(\varphi_0)+\ph(\varphi_1)$. 
\item[{\rm (v)}] Fixing $\varphi\in\ham(M,\omega)$, the persistent entropy $\ph(\varphi,L_0,L_1)$ is lower semicontinuous in the pair $(L_0,L_1)$ with respect to the Hofer distance. In particular, $\ph(\varphi,L)$ is lower semicontinuous in $L$.

\end{itemize}

\end{prop}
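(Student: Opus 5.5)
The strategy is to transfer each property from the corresponding property of barcode entropy, using the identity $\ph(\varphi;L_0,L_1)=\bh(\varphi;L_0,L_1)$ of Theorem~\ref{thm:Lpervsbar}, and $\ph(\varphi)=\bh(\varphi)$ of Theorem~\ref{thm:abpervsbar}. Proposition~4.4 of \c{C}ineli--Ginzburg--G\"urel~\cite{CGG} establishes (i)--(v) for $\bh$. Each item of the proposition then follows by replacing $\ph$ with $\bh$ on both sides and citing the matching statement there.

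\textbf{Step 1.} I check that the hypotheses of Theorem~\ref{thm:Lpervsbar} hold for every pair that appears. In (i), (iii) and (v) the pair $(L_0,L_1)$ is unchanged, or is replaced by a Hofer-close pair. In (ii) we replace it by $(\psi(L_0),\psi(L_1))$ with $\psi\in\symp(M,\omega)$. Such a $\psi$ preserves closedness, monotonicity, the minimal Maslov number and Hamiltonian isotopy, because $\psi\varphi_H\psi^{-1}=\varphi_{H\circ\psi^{-1}}$. In (iv) the product $L_0\times L_0'\subset M\times M'$ must be monotone with $N\geq 2$. This requires the two monotonicity constants to agree, which I would add as a standing assumption, following~\cite{CGG}. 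The absolute statements follow from $\ph(\varphi)=\ph(id\times\varphi;\Delta)$, i.e.\ from~\eqref{e:H=L} together with Theorem~\ref{thm:abpervsbar}.

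\textbf{Step 2.} I substitute into the barcode statements. For example, (i) reads
\[
\ph(\varphi^k;L_0,L_1)=\bh(\varphi^k;L_0,L_1)\leq k\,\bh(\varphi;L_0,L_1)=k\,\ph(\varphi;L_0,L_1).
\]
The barcode inequality behind it comes from passing to the subsequence $n=km$ in the $\limsup$. Items (ii) and (iii) are equalities and transfer verbatim. Item (iv) rests on the Künneth formula for filtered Floer complexes, under which product barcodes pair bars, combined with subadditivity of $\bh$.

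\textbf{Step 3.} Item (v) is the only place needing care, because lower semicontinuity must be transferred through an equality valid at each pair separately. Since $\ph=\bh$ holds pointwise in $(L_0,L_1)$ by Theorem~\ref{thm:Lpervsbar}, the lower semicontinuity of $\bh$ in the Hofer distance from~\cite{CGG} transfers directly. That lower semicontinuity follows from the estimate $b^{\epsilon+\delta}\leq b^\epsilon\leq b^{\epsilon-\delta}$ under Hofer perturbations of size $<\delta/2$, recalled in Remark~\ref{rmk:stability}.

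The main obstacle is verifying that \cite[Prop.~4.4]{CGG} is stated in a setting matching ours, namely tame $M$, monotone $L_i$ with $N\geq 2$, and all chord classes. The product case (iv) is the most delicate on this point. If a hypothesis there differs, I would reprove the barcode inequality directly by the Künneth argument on bars of length $>\epsilon$.
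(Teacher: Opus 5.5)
Your proposal is correct and matches the paper's proof: the paper likewise gets (i)--(v) in one step, by combining the pointwise identities $\ph=\bh$ from Theorems~\ref{thm:Lpervsbar} and~\ref{thm:abpervsbar} with Proposition~4.4 of~\cite{CGG}. Your remark that (iv) implicitly needs $L_0\times L_0'$ (resp.\ $M\times M'$) to be monotone with minimal Maslov number at least $2$, e.g.\ by matching monotonicity constants, is a legitimate refinement that the paper does not state.
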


\begin{rmk}
Note that the Floer complex of the pair $(L_0\times L_0',H\times H')$ with $L_1=\varphi_H^{-1}(L_0)$ and $L_1'=\varphi_{H'}^{-1}(L_0')$ over the universal Novikov field 
\[\Lambda_{\rm univ}=\bigg\{\sum_{i=1}^{\infty}a_iT^{\lambda_i}\;\big|\;a_i\in\bK, \;\lambda_i\in\R,  \;\forall C>0, \;\#\{i\in \N|a_i\neq 0, \lambda_i<C\}<\infty \bigg\}.
\]
 is the tensor product of the Floer complexes of $(L_0,H)$ and $(L'_0,H')$ over $\Lambda_{\rm univ}$. Note also that a coefficient extension $\Lambda \hookrightarrow \Lambda_{\rm univ}$ does not change the length of any finite bar in $\mathcal{B}(L_0,H)$; see~\cite[Prop. 6.8]{UZ}. These two facts
imply that every pair of finite bars $([a],T)\in \mathcal{B}(L_0,H)$ and $([b],T')\in \mathcal{B}(L_0',H')$ gives rise to two bars of length $\min\{T,T'\}$ in $\mathcal{B}(L_0\times L_0',H\times H')$, and that if one or both bars in a pair are infinite then the pair gives rise to one bar of length $\min\{T,T'\}$; see e.g.,~\cite{PSS,CGG}. From this the author expects that
\begin{equation}\label{e:product}
E^\epsilon (L_0\times L_0',L_1\times L_1')\geq E^\epsilon (L_0,L_1)+E^\epsilon (L_0',L_1')+\log 2 
\end{equation}
which was verified by the author for some randomly hypothesized finite bars of $\mathcal{B}(L_0,H)$ and $\mathcal{B}(L_0',H')$. Unfortunately, the author is unable to prove (\ref{e:product}). If this inequality holds true, then one can show that for any two Hamiltonian diffeomorphisms $\varphi_0: M\to M$ and $\varphi_1: M'\to M'$, 
\[
\ph(\varphi_0\times\varphi_1;L_0\times L_0',L_1\times L_1')\geq \ph(\varphi_0;L_0,L_1)+ \ph(\varphi_1;L_0',L_1').
\]
This, combining with Proposition~\ref{prop:property}~(iv), would yield
\[
\ph(\varphi_0\times\varphi_1;L_0\times L_0',L_1\times L_1')= \ph(\varphi_0;L_0,L_1)+ \ph(\varphi_1;L_0',L_1'),
\]
and in particular, $\ph(\varphi_0\times\varphi_1)= \ph(\varphi_0)+\ph(\varphi_1)$. 
\end{rmk}

\section{Persistent entropy for Liouville fillable contact manifolds}\label{sec:liouville}
In this section we shall define the \textit{persistent entropy} for any Liouville fillable contact manifold via the persistence module given by the filtered symplectic homology, and then establish a connection between barcode entropy and persistent entropy of Reeb flows. 

\subsection{Liouville domains, admissible Hamiltonians, periodic orbits and the action functional}\label{subsec:Liouville}
Let $(X,\lambda)$ be a Liouville domain with contact boundary $Y$. There exists a vector field on $X$, called the \emph{Liouville vector field} $Z$ of $(X,\lambda)$, which points transversely outward at $Y$ and satisfies
$\mathcal{L}_Zd\lambda=d\lambda$. Let $\theta=\lambda|_Y$ be the contact form on $Y$. 
The \emph{Reeb vector field} $R$ on $Y$ is defined by $d\theta(R,\cdot)=0$ and $\theta(R)=1$. The flow $\phi_\theta^t$ of $R$ is called a Reeb flow.  
The \emph{action spectrum}
$${\rm Spec}(\theta):=\bigg\{\int_\theta\lambda\ \bigg|\
\gamma\;\hbox{is a periodic Reeb orbit of}\;R\bigg\},$$
is closed and nowhere dense in $\mathbb{R}$. Moreover, $\lambda\wedge  (d\lambda)^{n-1}>0$ implies ${\rm Spec}(\theta)\subset (0,\infty)$.
The vector field $Z$ gives rise to an embedding
$\phi:(0,1]\times\partial X\to X$ such that
$$\phi(1,z)=z\quad\hbox{and}\quad \rho\partial_\rho\phi(\rho,z)=Z(\phi(\rho,z)).$$
It is easy to verify that $\phi^*\lambda=\rho\theta$, and thus $\phi^*d\lambda=d(\rho\theta)$. So a neighborhood of $\partial X$ in $X$
can be symplectically identified with the symplectic manifold $((0,1]\times\partial X, d(\rho\theta))$. Attaching a cylindrical end to
$(0,1]\times\partial X$, we obtain a \emph{completion} of $(X,\lambda)$ defined by
$$(\widehat{X},\widehat{\lambda}):=(X,\lambda)\cup_{\partial X}
([1,\infty)\times \partial X,\rho\theta).$$
Clearly, $\omega=d\widehat{\lambda}$ is a symplectic form on $\widehat{X}$. Hereafter we write $S^1=\R/\Z$. For a smooth Hamiltonian $H:S^1\times\widehat{X}\to\R$, the \emph{Hamiltonian vector field} $X_H$ associated to $H$ is defined by $-dH_t=\omega(X_{H_t},\cdot)$.
We denote $\mathcal{P}(H)\subseteq \widehat{X}$ as the set of all  $1$-periodic orbits of $X_{H_t}$:
$$\mathcal{P}(H)=\big\{z\in C^\infty(S^1,\widehat{X})\;|\; \dot{z}(t)=X_{H_t}(z(t))\}.$$

The action of a smooth loop $\gamma:S^1\to \widehat{X}$ is defined by 
\[
\mathcal{A}_H(\gamma)=\int_\gamma \widehat{\lambda}-\int_{S^1}H(\gamma(t))dt. 
\]
The set of all values of $\mathcal{A}_H$ on  $\mathcal{P}(H)$ is called the \textit{action spectrum} of $H$, denoted by $\mathrm{Spec}(H)$. 

We say that $\gamma\in\mathcal{P}(H)$ is  \textit{nondegenerate} if the linear map
$d\phi_H^1(z(0)):T_{z(0)}\widehat{X}\rightarrow T_{z(0)}\widehat{X}$ does not have $1$ as an eigenvalue. A Hamiltonian $H$ is called \textit{nondegenerate} if all $\gamma\in\mathcal{P}(H)$ are nondegenerate. 

Unless specifically stated otherwise, in this paper we mainly consider Hamiltonians $H:\widehat{X}\to\R$ which are constant on $X$ and satisfy $H=h(\rho)$ on $X\times [1,\infty)$ for some function $h\in C^\infty([1,\infty),\R)$ satisfying 
\begin{itemize}
 \item[(i)] $h$ is strictly monotone increasing;
 \item[(ii)] $h$ is convex, i.e., $h''\geq 0$ and $h''>0$ on $(1,\rho_0)$ for some $\rho_0>1$ depending on $H$;
 \item[(iii)] $h(\rho)$ is linear, i.e., $h(\rho)=\mu_H\rho-c_H$ on $(\rho_0,\infty)$ for two constants $\mu_H,c_H$ with $0<\mu_H\notin {\rm Spec}(\theta)$. In this case, we call $H$ \textit{linear at infinity}.  
\end{itemize}

We call $\mu_H$ in the (iii) the \textit{slope} of $H$.  We say that $H\in C^\infty(\widehat{X},\R)$ is \textit{admissible} if $H$ is negative on $X$ and satisfies the above conditions (i)-(iii). We denote by $\adH$ the set of all admissible Hamiltonians.

Let $J_t$ be the $t$-dependent $1$-periodic smooth \emph{$d\widehat{\lambda}$-compatible almost complex structure} on $\widehat{X}$, that is, $\langle\cdot, \cdot\rangle:=d\widehat{\lambda}(\cdot,J_t\cdot)$ is a loop of Riemannian metrics on $\widehat{X}$ and $J_t^2=-\mathrm{Id}\in \mathrm{End}(T\widehat{X})$. We call $J_t$ \emph{of contact type} on $[\rho_0,\infty)\times\partial X$ for some $\rho_0>0$ if $J_t$ is independent of $t$ outside a compact set of $\widehat{X}$ and satisfies 
$$d\rho\circ J_t=-\widehat{\lambda}\quad\hbox{on}\;[\rho_0,\infty)\times\partial X.$$
Denote by $\mathcal{J}(\widehat{X},\widehat{\lambda})$  the set consisting of almost complex structures  of contact type at infinity.

To define Floer homology of Hamiltonians linear at infinity and continuation maps between them, we need a maximum principle (cf.~\cite{Se,CFH} or~\cite[Lem.2.2]{GX} ) to guarantee that Floer cylinders
and continuation solutions of the Floer equation are contained in a compact region of $\widehat{X}$. 


\subsection{Filtered Floer and symplectic homology}\label{subsec:symphomology}
Fix a ground field $\bK$. Assume that $H$ is a nondegenerate Hamiltonian linear at infinity. Then the filtered Floer homology
$HF^s(H)$ is defined as the homology of the Floer complex $CF^s(H)$  generated by the $1$-periodic orbits with action less than $s$ provided that $s\in\R\setminus \mathrm{Spec}(H)$; see e.g.,~\cite{CFH,Vi}. When dropping the nondegenerate condition for $H$, one can still define the Floer homology of $H$ as the homology $HF^s(\widetilde{H})$ of the Floer complex $CF^s(\widetilde{H})$ of a small nondegenerate perturbation $\widetilde{H}$ of $H$ with the same slope, generated by the $1$-periodic orbits with action less than $s$. It can be shown that if $\widetilde{H}$ is sufficiently close to $H$ then $HF^s(\widetilde{H})$ is 
independent of $\widetilde{H}$. 

For $s_1,s_2\in \R\setminus \mathrm{Spec}(H)$ with $s_1<s_2$, we have the natural map 
\[
\iota_{s_1s_2}:HF^{s_1}(H)\longrightarrow HF^{s_2}(H)
\]
induced by the inclusion map $CF^{s_1}(H)\hookrightarrow CF^{s_2}(H)$. 

As a consequence, we obtain a persistent module via these homology spaces $HF^s(H)$ and these maps $\iota_{s_1s_2}$; see~\cite[Sect.3.2]{GGM}.  Indeed, one can extend the definition of $HF^s(H)$ to all $s\in\R$ by setting
\[
HF^s(H):=\varinjlim\limits HF^t(H) 
\]
where the direct limit is taken over all $t\in \R\setminus \mathrm{Spec}(H)$ with $t<s$. The map $\iota_{s_1s_2}$ can be extended naturally to these homology spaces. 
It was shown that the natural map $\iota_{s_1s_2}$ is an isomorphism provided that $[s_1,s_2)\cap \mathrm{Spec}(H)=\emptyset$; see e.g.,~\cite{Vi}.  According to Definition~\ref{df:pmod}, the family of space $s\to HF^s(H)$, together with the above natural maps, is a persistent module.  

For $s>0$, the \textit{filtered symplectic homology} $SH^s(X,\lambda)$ of $(X,\lambda)$ is defined as 
\[
SH^s(X,\lambda):=\varinjlim\limits_H HF^s(H)
\]
where the direct limit is taken over all Hamiltonians $H$ linear at infinity and
such that $H|_{S^1\times X}<0$. The symplectic homology of $(X,\lambda)$ over $\bK$ is $SH^\infty(X,\lambda)$. 
Since  admissible Hamiltonians form
a co-final family, we can restrict $H$ to the class $\adH$. Since for $s\leq 0$, $HF^s(H)=0$ for all $H\in\adH$, we have that 
$SH^s(X,\lambda)=0$ if $s\leq 0$. The map $\iota_{s_1s_2}$ from $HF^{s_1}(H)$ to $HF^{s_2}(H)$ induces the linear map
$\pi_{s_1s_2}$ from $SH^{s_1}(X,\lambda)$ to $SH^{s_2}(X,\lambda)$. Therefore, the family of filtered symplectic homology spaces $SH^s(X,\lambda)$ forms a persistence module in the sense of Definition~\ref{df:pmod} with spectrum $\mathcal{S}={\rm Spec}(\theta)\cup \{0\}$. Clearly, all bars in the barcode associated with this persistence module have  beginnings in the range $[0,\infty)$.

\subsection{Comparing barcode entropy with persistent entropy of Reeb flows}\label{subsec:per_on_Liou}
In the notation and conventions from
Sections~\ref{subsec:Liouville} and~\ref{subsec:symphomology}, we denote by $\mathcal{B}(X,\lambda)$ the barcode associated with the persistence module $SH^s(X,\lambda)$. Given $s,\epsilon>0$, we denote by $\mathcal{B}^\epsilon(X,\lambda)$ the multiset of all bars from $\mathcal{B}(X,\lambda)$ of length greater than $\epsilon$, and $\mathcal{B}^\epsilon_s(X,\lambda)$ the multiset of bars $(a,b]$ in $\mathcal{B}(X,\lambda)$ with $a<s$ of length $b-a>\epsilon$, respectively. Then we obtain a barcode $\mathcal{FB}^\epsilon_s(X,\lambda)$ comprising finite bars by  keeping finite bars and cutting those infinite bars at $s$ from $\mathcal{B}^\epsilon_{s-\epsilon}(X,\lambda)$ (cf.  Section~\ref{sec:persist}). The Shannon entropy of $\mathcal{FB}^\epsilon_s(X,\lambda)$ is denoted by $E^\epsilon_{s}(X,\lambda)$. 
Clearly, we have $|\mathcal{FB}^\epsilon_s(X,\lambda)|=|\mathcal{B}^\epsilon_{s-\epsilon}(X,\lambda)|$. 

The \textit{persistent entropy} of $(X,\lambda)$ is by definition the exponential growth rate
\[
\ph(X,\lambda):=\limsup_{\epsilon \searrow 0}\limsup_{s\to\infty}\frac{ E^\epsilon_{s}(X,\lambda)}{s}.
\]
See also Definition~\ref{df:per_Liou}. The \textit{slow persistent entropy} $\sph(X,\lambda)$ of $(X,\lambda)$ is defined similarly by replacing the linear growth of  $E^\epsilon_{s}(X,\lambda)$ with its logarithmic growth.

Recall that the barcode entropy of a Liouville domain $(X,\lambda)$, denoted by $\bh(X,\lambda)$, measures the exponential growth rate of $b^\epsilon_s(X,\lambda)=|\mathcal{B}^\epsilon_s(X,\lambda)|$; see~\cite{CGG2,FLS}.
\begin{df}\label{df:bar_Liou}
The \textit{barcode entropy} of $(X,\lambda)$ is defined as 
\[
\bh(X,\lambda)=\lim_{\epsilon \searrow 0}
\bh^\epsilon(X,\lambda)
\]
where $\bh^\epsilon(X,\lambda)$ is the \textit{$\epsilon$-barcode entropy} given by 
\[
\bh^\epsilon(X,\lambda)=\limsup_{s\to\infty}\frac{\log^+ b^\epsilon_s(X,\lambda)}{s}.
\]
where the logarithm is taken base 2 and $\log^+=\log$ except that $\log^+0=0$.
\end{df} 
The \textit{slow barcode entropy} of $(X,\lambda)$ is defined as the polynomial growth rate
\[
\sbh(X,\lambda)=\lim_{\epsilon \searrow 0}\sbh^\epsilon(X,\lambda)=\lim_{\epsilon \searrow 0}\limsup_{s\to\infty}\frac{\log^+ b^\epsilon_s(X,\lambda)}{\log s}.
\]

The \textit{dynamics barcode entropy} of $(X,\lambda)$ is defined by
\[
\hbar_{Dyn}^\epsilon(X,\lambda)=\limsup_{s\to\infty}\frac{\log^+ b^\epsilon_{s-\epsilon}(X,\lambda)}{s}
\quad\hbox{and}\quad \hbar_{Dyn}(X,\lambda)=\lim_{\epsilon \searrow 0}
\hbar_{Dyn}^\epsilon(X,\lambda).
\]
The above definition was given in~\cite{CGG2}.  By analogy with the slow barcode entropy of $(X,\lambda)$, we define
\[
\hbar_{\mathcal{S}Dyn}^\epsilon(X,\lambda):=\limsup_{s\to\infty}\frac{\log^+ b^\epsilon_{s-\epsilon}(X,\lambda)}{\log s},
\quad
\hbar_{\mathcal{S}Dyn}(X,\lambda):=\lim_{\epsilon \searrow 0}\hbar_{\mathcal{S}Dyn}^\epsilon(X,\lambda).
\]
Note that $b^\epsilon_{s-\epsilon}(X,\lambda)$ represents the number of  bars of length greater than $\epsilon$ and beginning below $s-\epsilon$ in the barcode $\mathcal{B}(X,\lambda)$. It is easy to see that 
\begin{equation}\label{e:dyn=bar}
\bh(X,\lambda)=\hbar_{Dyn}(X,\lambda)\quad\hbox{and}\quad \sbh(X,\lambda)=\hbar_{\mathcal{S}Dyn}(X,\lambda);
\end{equation}
see also~\cite[Prop.4.3]{CGG2}.

To prove Theorem~\ref{thm:liouville}, we need the following well-known result.

\begin{lem}[{\cite[Thm.2.3]{CGG0}}]\label{lem:upbdbd}
Let $(X,\lambda)$ be a Liouville domain with $SH(X,\lambda)=0$. Then the lengths of all bars in the barcode $\mathcal{B}(X,\lambda)$ have a uniform upper bound.
\end{lem}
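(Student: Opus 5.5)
The approach is to show that the vanishing of $SH(X,\lambda)$ is already witnessed at a finite action level by the unit, and then to use the module structure of filtered symplectic homology to transport this to every class.

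First, I would fix the unit. For small $\delta>0$ there is an isomorphism $SH^\delta(X,\lambda)\cong H^*(X;\bK)$ (up to grading conventions), coming from Hamiltonians that are $C^2$-small on $X$. Let $e_\delta\in SH^\delta(X,\lambda)$ be the class of the fundamental class, i.e.\ the unit. Since $SH(X,\lambda)=SH^\infty(X,\lambda)=\varinjlim_s SH^s(X,\lambda)$ vanishes, the image of $e_\delta$ in the direct limit is zero. A direct limit of vector spaces is the union of the images, so there is a finite $s_0>\delta$ with
\[
\pi_{\delta,s_0}(e_\delta)=0 .
\]

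Second, I would use the filtered pair-of-pants product. For Hamiltonians $H_1,H_2$ linear at infinity there is a product $HF^{a}(H_1)\otimes HF^{b}(H_2)\to HF^{a+b+\eta}(H_1\#H_2)$, where $\eta>0$ can be made arbitrarily small by suitable choices. Here $H_1\#H_2$ denotes a Hamiltonian of slope $\mu_{H_1}+\mu_{H_2}$, obtained by reparametrizing in time. The action estimate comes from the energy identity for solutions over the pair of pants, and the maximum principle (as in~\cite{Se,CFH}) keeps the solutions in a compact region. After passing to direct limits this product is compatible with continuation maps, and it gives
\[
SH^a(X,\lambda)\otimes SH^b(X,\lambda)\to SH^{a+b}(X,\lambda),
\]
compatible with the structure maps $\pi$. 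The key identity I need is that, for every $x\in SH^s(X,\lambda)$,
\[
e_\delta\cdot x=\pi_{s,s+\delta}(x)\in SH^{s+\delta}(X,\lambda).
\]
At chain level I would prove this by a gluing and homotopy argument: multiplication by the minimum or fundamental chain of a $C^2$-small Hamiltonian is chain homotopic to the continuation map, and the homotopy raises action by at most $\delta$.

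Granting these two steps, I would conclude as follows. By compatibility with $\pi$,
\[
\pi_{s,s+s_0}(x)=\pi_{\delta,s_0}(e_\delta)\cdot x=0
\]
for every $s$ and every $x\in SH^s(X,\lambda)$. In barcode language this means every class dies within action $s_0$. Hence $\mathcal B(X,\lambda)$ has no infinite bars, which also matches $SH=0$. Moreover, every finite bar $(a,b]$ satisfies $b-a\leq s_0$: a bar of length greater than $s_0$ would give a class $x\in SH^{s}(X,\lambda)$, with $s$ slightly above $a$, such that $\pi_{s,s+s_0}(x)\neq0$. Thus $s_0$ is the uniform bound.

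The main obstacle is the second step, namely establishing the filtered product with controlled action loss and the identity $e_\delta\cdot x=\pi_{s,s+\delta}(x)$ with shift exactly $\delta$ (or $\delta$ plus something arbitrarily small). These need careful choices of Hamiltonians with slopes adding up, uniform energy bounds, and compatibility of the direct limits with the product. If this becomes too technical, the fallback I would try is the Viterbo transfer or action-window argument of~\cite{CGG0}. That argument bounds the persistence of any class by the action level at which the unit dies, and it can be phrased purely in terms of continuation maps that factor through $SH^{s_0}$ of a shifted Hamiltonian.
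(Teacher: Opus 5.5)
The paper gives no proof of its own for this lemma and simply quotes \cite[Thm.~2.3]{CGG0}. Your argument is correct and is the standard product argument for this fact: $SH(X,\lambda)=0$ means the unit $e_\delta\in SH^\delta(X,\lambda)$ dies at some finite level $s_0$, and the filtered pair-of-pants product together with the filtered unit identity $e_\delta\cdot x=\pi_{s,s+\delta}(x)$ then forces $\pi_{s,s+s_0}=0$ for every $s$, so every bar has length at most $s_0$ (and none is infinite). The two Floer-theoretic inputs you flag as the main obstacle, the filtered product with arbitrarily small action loss and the filtered unit identity, are the standard ingredients such an argument rests on, so nothing essential is missing.
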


The same argument gives the following useful criterion. For $s,\epsilon>0$, let $U_s^\epsilon(X,\lambda)$ be the maximal length of a bar in $\mathcal{FB}_s^\epsilon(X,\lambda)$; if this barcode is empty, set $U_s^\epsilon=\epsilon$.

\begin{prop}[Subexponential length-growth criterion]
\label{prop:subexp-length}
Let $(X,\lambda)$ be a Liouville domain. If, for every $\epsilon>0$,
\[
\limsup_{s\to\infty}\frac{\log U_s^\epsilon(X,\lambda)}{s}=0,
\]
then $\ph(X,\lambda)=\bh(X,\lambda)$. If, moreover,
\[
\limsup_{s\to\infty}\frac{\log U_s^\epsilon(X,\lambda)}{\log s}=0
\]
for every $\epsilon>0$, then $\sph(X,\lambda)=\sbh(X,\lambda)$.
\end{prop}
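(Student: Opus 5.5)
The plan is to repeat the two-sided estimate from the proof of Theorem~\ref{thm:Lpervsbar}, now for the truncated barcode $\mathcal{FB}_s^\epsilon(X,\lambda)$. The role of the boundary depth is played by $U_s^\epsilon$, and the role of the iteration parameter $n$ is played by the action level $s$. Fix $\epsilon>0$ and $s$ with $\mathcal{FB}_s^\epsilon(X,\lambda)\neq\emptyset$. Write its lengths as $l_1,\dots,l_k$ with $k=|\mathcal{FB}_s^\epsilon(X,\lambda)|=b^\epsilon_{s-\epsilon}(X,\lambda)$. Every $l_i$ lies in $(\epsilon,U_s^\epsilon]$. This holds for genuine finite bars of length $>\epsilon$, and also for truncated infinite bars $(a,s]$, since $a<s-\epsilon$ gives $s-a>\epsilon$.

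\textbf{Upper bound.} By \eqref{e:max}, $E_s^\epsilon(X,\lambda)\le\log k=\log b^\epsilon_{s-\epsilon}(X,\lambda)$. We also have $b^\epsilon_{s-\epsilon}\le b^\epsilon_s$. Dividing by $s$ (resp. $\log s$) and taking $\limsup_{s\to\infty}$ and then $\epsilon\searrow0$ gives $\ph\le\hbar_{Dyn}=\bh$ and $\sph\le\hbar_{\mathcal{S}Dyn}=\sbh$ by \eqref{e:dyn=bar}. This direction needs no hypothesis, and it also proves the inequality part of Theorem~\ref{thm:liouville}.

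\textbf{Lower bound.} As in \eqref{e:lowbd}, write $S=\sum l_i$ and $A=S/k>\epsilon$. Then
\[
E_s^\epsilon=\log k+\log A-\frac1S\sum_i l_i\log l_i\ \ge\ \log k+\log\epsilon-\log U_s^\epsilon .
\]
Here we use that $\log$ is increasing and that each $l_i\le U_s^\epsilon$; the convention $U_s^\epsilon=\epsilon$ for the empty barcode makes the bound trivially consistent. Hence
\[
\frac{E_s^\epsilon}{s}\ge\frac{\log b^\epsilon_{s-\epsilon}}{s}+\frac{\log\epsilon}{s}-\frac{\log U_s^\epsilon}{s}.
\]
Take $\limsup_{s\to\infty}$. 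The middle term tends to $0$, and the last term has $\limsup$ equal to $0$ by hypothesis. Since $\limsup(a_s-b_s)\ge\limsup a_s-\limsup b_s$, we get $\ph^\epsilon(X,\lambda)\ge\hbar^\epsilon_{Dyn}(X,\lambda)$. When $b^\epsilon_{s-\epsilon}=0$ the comparison with $\log^+$ is trivial. Letting $\epsilon\searrow0$ and using \eqref{e:dyn=bar} yields $\ph\ge\bh$. The same computation with $\log s$ in place of $s$, now using $\log\epsilon/\log s\to0$ and the second hypothesis, gives $\sph\ge\sbh$.

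\textbf{Main obstacle.} The step that needs care is the bookkeeping between the two counts. Persistent entropy uses $\mathcal{FB}_s^\epsilon$, which counts bars starting below $s-\epsilon$, whereas $\bh$ is defined with $b^\epsilon_s$. I would handle this with the identity \eqref{e:dyn=bar} (\cite[Prop.4.3]{CGG2}), which equates the growth rates of $b^\epsilon_s$ and $b^\epsilon_{s-\epsilon}$. I would also check that the $\limsup$ over $\epsilon$ in Definition~\ref{df:per_Liou} and the limit in Definition~\ref{df:bar_Liou} are compatible. Since $\hbar^\epsilon_{Dyn}$ is monotone in $\epsilon$, the $\epsilon$-wise inequalities pass to the limit.
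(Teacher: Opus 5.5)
Your proposal is correct and is essentially the paper's proof. It uses the max-entropy bound $E_s^\epsilon\le\log b^\epsilon_{s-\epsilon}$ and the lower bound $E_s^\epsilon\ge\log b^\epsilon_{s-\epsilon}+\log\epsilon-\log U_s^\epsilon$, divides by $s$ or $\log s$, and invokes \eqref{e:dyn=bar}; the paper gets the lower bound from $p_i\le U_s^\epsilon/(b^\epsilon_{s-\epsilon}\,\epsilon)$ rather than through the average $A$, but it is the same estimate. The bookkeeping you flag as the main obstacle is harmless: $b^\epsilon_{s-\epsilon}$ is the same count evaluated at a level shifted by the constant $\epsilon$, which changes neither the linear nor the logarithmic growth rate.
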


\begin{proof}
Write $\mathcal{FB}_s^\epsilon(X,\lambda)=\{(a_i,b_i]\}_{i=1}^{N_s^\epsilon}$ and $l_i=b_i-a_i$. Then $N_s^\epsilon=b^\epsilon_{s-\epsilon}(X,\lambda)$ and $\epsilon<l_i\leq U_s^\epsilon$. The maximality of Shannon entropy gives
\[
E_s^\epsilon(X,\lambda)\leq \log N_s^\epsilon.
\]
Conversely, with $S=\sum_i l_i$ and $p_i=l_i/S$, one has $p_i\leq U_s^\epsilon/(N_s^\epsilon\epsilon)$, and hence
\[
E_s^\epsilon(X,\lambda)=-\sum_i p_i\log p_i
\geq \log N_s^\epsilon+\log \epsilon-\log U_s^\epsilon.
\]
Dividing by $s$ gives $\ph^\epsilon(X,\lambda)=\hbar_{Dyn}^\epsilon(X,\lambda)$ under the first assumption; dividing by $\log s$ gives the slow version under the second assumption. Letting $\epsilon\searrow0$ and using~(\ref{e:dyn=bar}) completes the proof.
\end{proof}

\noindent\textbf{Proof of Theorem~\ref{thm:liouville}.} 
The inequalities $\ph(X,\lambda)\leq \bh(X,\lambda)$ and $\sph(X,\lambda)\leq \sbh(X,\lambda)$ follow from the estimate $E_s^\epsilon(X,\lambda)\leq \log |\mathcal{FB}_s^\epsilon(X,\lambda)|=\log b^\epsilon_{s-\epsilon}(X,\lambda)$ and~(\ref{e:dyn=bar}). If $SH(X,\lambda)=0$, Lemma~\ref{lem:upbdbd} gives a uniform upper bound on $U_s^\epsilon(X,\lambda)$, so Proposition~\ref{prop:subexp-length} gives the reverse inequalities. 
\qed

\section{Flexibility of barcode and persistent entropies for Reeb flows}

Let $\theta$ be a contact form on a manifold $Y$ of dimension $2n-1\geq 3$. 
The contact volume of $Y$ with respect to $\theta$ is defined as
\[
\mathrm{vol}_\theta(Y)=\frac{1}{n!\omega_n}\int_Y\theta\wedge(d\theta)^{n-1}. 
\]

The key to prove Theorem~\ref{thm:collapse} is the following entropy collapse of Reeb flows proved by Abbondandolo-Alves-Sağlam-Schlenk:
\begin{thm}[{\cite[Thm.1.1]{AASS}}]\label{thm:aass}
Let $(Y,\xi)$ be a closed co-orientable contact manifold. For every $\varepsilon>0$, $(Y,\xi)$ admits a contact form $\theta$ which satisfies $\mathrm{vol}_\theta(Y)=1$ and that the topological entropy $h_{\mathrm{top}}(\theta)$ of its Reeb flow is smaller than $\varepsilon$. 

\end{thm}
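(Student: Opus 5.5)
The plan is to reduce the statement to a scaling argument, using the homogeneity of the two quantities. For a contact form $\theta$ and a constant $c>0$, the Reeb vector field of $c\theta$ is $c^{-1}R_\theta$. The Reeb flow of $c\theta$ is therefore a time-reparametrization of that of $\theta$, so $h_{\mathrm{top}}(c\theta)=c^{-1}h_{\mathrm{top}}(\theta)$. Since $\dim Y=2n-1$, we also have $\mathrm{vol}_{c\theta}(Y)=c^{n}\,\mathrm{vol}_\theta(Y)$. Hence the product $h_{\mathrm{top}}(\theta)\,\mathrm{vol}_\theta(Y)^{1/n}$ is scale invariant. It suffices to produce, for every $\varepsilon>0$, a contact form $\theta$ defining $\xi$ with $h_{\mathrm{top}}(\theta)\,\mathrm{vol}_\theta(Y)^{1/n}<\varepsilon$, and then normalize the volume to $1$ by a constant rescaling. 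Equivalently, I would keep the entropy uniformly bounded while making the volume arbitrarily large.

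For the construction, I would fix a reference form $\theta_0$ for $\xi$ and use a Giroux open book decomposition supporting $\xi$, taking $\theta_0$ adapted to it. Near the binding $B$ there is a standard neighbourhood $B\times D^2$ on which $\theta_0$ can be put in the normal form $h_1(r)\,\alpha_B+h_2(r)\,d\phi$, where $\alpha_B$ is a contact form on $B$. The key step is to modify $\theta_0$ only inside a region $U$ that is invariant under the new Reeb flow and has controlled dynamics. On $U$ I would take a toric-type or integrable model in the coordinates $(r,\phi)$, in which the Reeb flow is a fibred rotation over the Reeb flow of $\alpha_B$. The profile functions $(h_1,h_2)$ are to be chosen so that the radial interval carries a huge contact volume $\int h_1^{\,n-2}(h_1h_2'-h_2h_1')\,dr\,d\phi\wedge\alpha_B\wedge(d\alpha_B)^{n-2}$ while $h_1$ stays bounded. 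That constraint keeps the Reeb vector field tangent to the tori $\{r=\mathrm{const}\}\times B$ and its $B$-component bounded by a multiple of $R_{\alpha_B}$.

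Since $U$ and its complement are invariant, the topological entropy of the whole flow is the maximum of the entropies on the two pieces. Outside $U$ the form is unchanged, so that entropy is at most $h_{\mathrm{top}}(\theta_0)$. Inside $U$ the flow is a skew product over a time change of the Reeb flow of $\alpha_B$ with isometric fibres, so its entropy is bounded by a constant multiple of $h_{\mathrm{top}}(\alpha_B)$ that is independent of how much volume is inserted. This gives entropy bounded uniformly and volume tending to infinity, and the rescaling above finishes the proof. If $\dim Y=3$, the binding is a union of circles and the model on $U$ is fully integrable, hence has zero entropy.

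I expect the main obstacle to be the choice of the profile $(h_1,h_2)$. It must satisfy three requirements at once: the contact condition $h_1^{\,n-2}(h_1h_2'-h_2h_1')>0$, a smooth gluing to $\theta_0$ at the boundary of $U$, and large volume together with a bounded $B$-directed Reeb speed. A naive stretching of $h_2$ makes the Reeb field slow in the $\phi$-direction but may accelerate it along $B$. My first attempt would be to let the curve $(h_1,h_2)$ wind many times around the origin, in the spirit of a Lutz twist, while keeping $h_1$ bounded. I would then check that the extra winding only adds zero-entropy rotational dynamics. As a fallback, I would bound the entropy via the exponential growth of periodic orbits in each homotopy class.
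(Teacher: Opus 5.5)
The paper does not prove this statement; it quotes it from \cite{AASS}. Your argument therefore has to stand on its own, and it fails at the reduction in your first paragraph, which is inverted. If $h_{\mathrm{top}}(\theta)\leq C$ and $V=\mathrm{vol}_\theta(Y)\to\infty$, the normalizing constant is $c=V^{-1/n}<1$. Then $h_{\mathrm{top}}(c\theta)=h_{\mathrm{top}}(\theta)\,V^{1/n}$, which you can only bound by $CV^{1/n}\to\infty$. You identified the correct scale-invariant quantity $h_{\mathrm{top}}(\theta)\,\mathrm{vol}_\theta(Y)^{1/n}$, but ``bounded entropy, huge volume'' makes it large, not small. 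What you need is entropy decaying faster than $\mathrm{vol}^{-1/n}$. No choice of profile repairs this within your scheme. Since $U$ and its complement are invariant and $\theta=\theta_0$ off $U$, you always have
\[
h_{\mathrm{top}}(\theta)\,\mathrm{vol}_\theta(Y)^{1/n}\geq h_{\mathrm{top}}\big(\theta_0|_{Y\setminus U}\big)\,\mathrm{vol}_{\theta_0}(Y\setminus U)^{1/n}.
\]
So a modification confined to a dynamically isolated region, such as a thin neighbourhood of the binding, can never push the normalized entropy below what $\theta_0$ already has on the complement. Inserting extra zero-entropy volume there only increases the left-hand side.

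The missing idea is that the added volume must be traversed by the orbits that carry the entropy, so that those orbits are slowed down. Heuristically, suppose that away from a small set the flow is a suspension of a fixed return map $f$ with roof function of size $T$, for instance using a page of a supporting open book as a section away from the binding. Abramov's formula then gives entropy of order $h_{\mathrm{top}}(f)/T$, while the volume is of order $T$. Hence $h_{\mathrm{top}}\,\mathrm{vol}^{1/n}\sim T^{1/n-1}\to0$, because $n\geq2$. The point is that slowing down only in the flow direction multiplies time and volume by the same factor, whereas $\theta\mapsto c\theta$ multiplies time by $c$ but volume by $c^n$. The binding, where this slowing does not apply, then needs separate control, e.g.\ by induction on dimension.

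Your local step also has independent problems.
\begin{itemize}
\item Winding $(h_1,h_2)$ around the origin changes the contact structure. For $\dim Y=3$, at the first radius $r_0>0$ where $h_2$ vanishes again, the disc $\{\mathrm{pt}\}\times\{r\leq r_0\}$ is tangent to the contact planes along its boundary. This is an overtwisted disc, so the new structure is not isotopic to $\xi$ when $\xi$ is tight.
\item For $n\geq3$, the contact condition $h_1^{n-2}(h_1h_2'-h_2h_1')>0$ forces $h_1\neq0$, so winding is impossible. With $h_1$ bounded and the polar angle of $(h_1,h_2)$ increasing inside the fixed sector $[0,\vartheta_1]$, $\vartheta_1<\pi/2$, set by the gluing, the inserted volume stays bounded.
\item The $B$-component of the Reeb field is $h_2'/(h_1h_2'-h_2h_1')$, which a bound on $h_1$ does not control.
\item You never invoke Gray's theorem along a path of contact forms to get back to $\xi$ itself.
\end{itemize}
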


\noindent\textbf{Proof of Theorem~\ref{thm:collapse}.} 
Suppose that there exists a Liouville form $\lambda_0$ on the Liouville domain $X$ with $\partial X = Y$ and $\ker(\lambda_0|_Y)=\xi$. Let $\theta$ be a contact form on $Y$ with $\ker\theta = \xi$ which is given by Theorem~\ref{thm:aass}.  We will show that there exists a Liouville 1-form $\lambda$ on $X$ such that $\lambda|_Y = \theta$. Although this fact is, of course, well-known to experts, we include its proof for the sake of completeness.

Since $\partial X = Y$ is a contact boundary, there exists a collar neighbourhood $$Y\times(-\varepsilon,0]\subset X$$ with collar coordinate $t$ ($t=0$ at the boundary) such that in this neighbourhood
\[
\lambda_0 = e^t\alpha,
\]
where $\alpha = \lambda_0|_Y$ is the original contact form on $Y$. Write $\theta = f\alpha$ with $f:Y\to\mathbb{R}_{>0}$ a smooth positive function. Since $Y$ is compact, $f$ has a positive minimum. Choose a constant
\[
0<c<m:=\min_Y f>0. 
\]
Choose a smooth function
$\beta:(-\varepsilon,0]\to[0,1]$
such that
$\beta(t)=0$ near  $t=-\varepsilon$, 
$\beta(t)=1$ near  $t=0$, 
and $\beta'(t)\ge 0$. We define
\[
h(t,x):=c+\beta(t)(f(x)-c).
\]

Now we construct a new $1$-form $\lambda$ on $X$ as follows:
\begin{itemize}
\item On the collar $Y\times(-\epsilon,0]\subset X$, define 
\[
\lambda:=e^t h(t,x)\alpha;
\]
\item On the complement of $Y\times(-\epsilon,0]$ in $X$, define
\[
\lambda:=c\lambda_0.
\]
\end{itemize}
These definitions agree near the inner end of the collar because
$\beta=0$ there, so $\lambda$ is a globally smooth one-form on $X$.

At the boundary $t=0$, since $\beta=1$ near $0$, we have
\[
\lambda|_Y
=
h(0,x)\alpha
=
f(x)\alpha
=
\theta.
\]
It remains to check that $d\lambda$ is symplectic. This is automatic away
from the collar, since there
$d\lambda=c\,d\lambda_0$
is symplectic. Hence it suffices to check the collar.

Write
\[
r(t,x):=e^th(t,x).
\]
Then
$\lambda=r\alpha$ on the collar,
and therefore
\[
d\lambda=dr\wedge\alpha+r\,d\alpha.
\]
Suppose that $\mathrm{dim} Y=2n-1$. Using 
$\alpha\wedge\alpha=0$ and the fact that
$(d\alpha)^n$ vanishes on $Y$ for dimensional reasons, we obtain
\[
(d\lambda)^n
=
n r^{n-1} dr\wedge \alpha\wedge(d\alpha)^{n-1}.
\]
Write $dr=(\partial_t r)\,dt+d_Yr$. The term involving $d_Yr$ vanishes in the top wedge product, because
$
d_Yr\wedge\alpha\wedge(d\alpha)^{n-1}
$
is a $2n$-form purely in the $Y$-directions, while $Y$ has dimension $2n-1$. Hence
\[
(d\lambda)^n
=
n r^{n-1}(\partial_t r)\,
dt\wedge\alpha\wedge(d\alpha)^{n-1}.
\]
Since
$h(t,x)\ge c+\beta(t)(\min_Y f-c)>0$ and $\partial_th=\beta'(t)(f(x)-c)\ge 0$, we immediately obtain
\[
\partial_t r=e^t(h+\partial_th)>0.
\]
Consequently, 
$(d\lambda)^n\neq0$
everywhere on the collar. Hence $d\lambda$ is symplectic on all of $X$.
Moreover, by construction,
$\lambda|_Y=\theta$.
Thus $(X,\lambda)$ is a Liouville domain filling of $(Y,\xi)$. 

To finish the proof, using Stokes' formula we get
\[
\mathrm{vol}_\theta(Y)=\mathrm{vol}_{d\lambda}(X).
\]
Since $\mathrm{vol}_\theta(Y)=1$ and $h_{\mathrm{top}}(\theta)<\varepsilon$, combining the first inequality in Theorem~\ref{thm:liouville} and the inequality (\ref{eq:bartop}) concludes the proof of the theorem. \qed

Let  $(M,g)$ be a closed Riemannian manifold. 
The space of free loops $\gamma: S^1:=\mathbb{R}/\mathbb{Z}\to M$ is denoted by $\Lambda M$. The \textit{length} and \textit{energy} of a loop $\gamma\in \Lambda M$ are given by
\[
\ell(\gamma)=\int_0^1|\dot{\gamma}(t)|_gdt,\qquad E(\gamma)=\int_0^1|\dot{\gamma}(t)|_g^2dt.
\]
The energy defines a smooth Morse-Bott function on the loop space whose
critical points are constant loops and geodesics in $\Lambda M$. We denote by $\mathcal{P}_g$ the set of closed geodesics $\gamma\in \Lambda M$ which are geometrically different, meaning that two closed geodesics that differ by a reparameterization are regarded as the same geodesic. Then we have
\[
\mathcal{P}_g=\bigcup_{t>0}\mathcal{P}_g^t,\qquad \mathcal{P}_g^t=\{\gamma\in \mathcal{P}_g\;|\;\ell(\gamma)<t\}.
\]

Let $\lambda_\mathrm{can}$ be the canonical Liouville $1$-form on $T^*M$ which  has the form $pdq$ in local coordinates $(q,p)$ of $T^*M$. 
The unit disk cotangent of $M$ is defined as 
\[
D_g^*M:=\{(q,p)\in T^*M\;|\; |p|_g\leq 1\}
\]
with boundary $S_g^*M$.

The filtered symplectic homology $SH_*^s(D_g^*M,\lambda_\mathrm{can})$ over $\Z_2$, together with linear maps $\pi_{s,t}$, forms a persistent module; see~Section~\ref{subsec:symphomology}. 
We first record the barcode computation for negatively curved cotangent disk bundles.

\begin{lem}\label{lem:sympbar}
Let  $(M^n,g)$ be a closed Riemannian manifold of negative curvature.  Then the barcode of the persistent module $(SH_*^s(D_g^*M,\lambda_\mathrm{can}),\pi)$ has the form
\[
\mathcal{B}(D_g^*M,\lambda_\mathrm{can})=\bigoplus_{k=0}^n(\Z_2(0,\infty))^{b_k(M)}\oplus\bigoplus_{c\in \mathcal{P}_g} \big(\Z_2(\ell(c),\infty))^2
\]
where $b_k(M)$ denotes the dimension of the $k$-th homology of $M$ over $\Z_2$. 

\end{lem}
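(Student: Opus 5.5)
We compute the filtered symplectic homology of $D_g^*M$ through the Viterbo isomorphism with the energy/length filtration on the free loop space. Concretely, we use the filtered version of Viterbo's isomorphism (Abbondandolo--Schwarz, Salamon--Weber), which for each $s\notin\mathrm{Spec}$ identifies $SH^s_*(D_g^*M,\lambda_{\mathrm{can}})$ with $H_*(\Lambda^{<s}M;\Z_2)$. Here $\Lambda^{<s}M=\{\gamma\in\Lambda M\mid \ell(\gamma)<s\}$, or equivalently the sublevel set of $\sqrt{E}$. We check that under this identification the persistence maps $\pi_{s,t}$ correspond to the maps induced by the inclusions $\Lambda^{<s}M\hookrightarrow\Lambda^{<t}M$. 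With $\Z_2$ coefficients there is no orientation issue, since the local system twist is trivial mod $2$. The barcode of $\mathcal{B}(D_g^*M,\lambda_{\mathrm{can}})$ is then the barcode of the sublevel-set persistence module of the length functional on $\Lambda M$.

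In negative curvature the length (equivalently energy) functional is Morse--Bott. Its critical manifolds are the constant loops, a copy of $M$ at level $0$, and, for each geometrically distinct closed geodesic $c\in\mathcal{P}_g$ (including iterates, which are counted as elements of $\mathcal{P}_g$ in the statement's convention), an $S^1$ of reparametrizations at level $\ell(c)$. There are no conjugate points, so every nonconstant critical circle has Morse index $0$ (Morse index theorem). Moreover, each free homotopy class of loops contains exactly one closed geodesic (Cartan/Hadamard uniqueness in negative curvature) and is the loop space component of an $S^1$. The nontrivial components are $K(\pi,1)$'s homotopy equivalent to $S^1$, with the centralizer of a hyperbolic element being infinite cyclic; the contractible component is $\simeq M$. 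We use this to read off the whole module component by component. On the contractible component, sublevels for $s>0$ deform onto constant loops, because there are no nonconstant contractible closed geodesics. This gives $H_*(M;\Z_2)$ born at $0$ and never dying, i.e. the $\bigoplus_k(\Z_2(0,\infty))^{b_k(M)}$ summand. On the component of a class $[c]$, the sublevel is empty for $s\le\ell(c)$. For $s>\ell(c)$ it deformation retracts (by the negative gradient flow, using the index-$0$ Morse--Bott structure and Palais--Smale) onto the critical circle, so it has homology $H_*(S^1;\Z_2)=\Z_2\oplus\Z_2$. All inclusions for $\ell(c)<s<t$ are isomorphisms. This yields the two infinite bars $(\ell(c),\infty)$.

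The structure theorem (Theorem~\ref{thm:str}) and additivity of barcodes under direct sums over free homotopy classes then give the claimed decomposition. There are no finite bars at all, since no class of a component ever dies.

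I expect the main obstacle to be the first step: invoking a filtered Viterbo isomorphism that is compatible with the persistence maps and whose filtration by action matches geodesic length (not energy) with the normalization $SH^s$, together with the identification of $\mathrm{Spec}(\theta)$ with the length spectrum. I would cite the Abbondandolo--Schwarz filtered isomorphism (or \cite{GGM}, where this barcode description is used) and verify that the action of the Hamiltonian orbit corresponding to $c$ tends to $\ell(c)$ in the direct limit over admissible Hamiltonians. The Morse-theoretic computation afterwards is standard.
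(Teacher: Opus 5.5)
Your proposal is correct and follows essentially the same route as the paper. The paper first cites a filtered Viterbo-type isomorphism compatible with the persistence maps: Theorem~5.3 of Cieliebak--Hingston--Oancea, which filters the loop-space Morse homology by $\sqrt{E}$ and so settles the length-versus-energy normalization you flagged. It then does the same class-by-class Morse--Bott computation: the contractible component gives $H_*(M;\Z_2)$ born at $0$, and each nontrivial free homotopy class gives two infinite bars born at the length of its unique closed geodesic.

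As in the paper's proof, the multiplicity $2$ per $c\in\mathcal{P}_g$ is right only if $\mathcal{P}_g$ is read as being in bijection with the nontrivial free homotopy classes. That means iterates, and also orientation reversals, count separately, which is the convention you adopt.
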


\begin{proof}
According to Theorem~5.3 in~\cite{CHO}, we have   isomorphisms on filtered symplectic homology
\[
\Psi: SH_*^s(D_g^*M,\lambda_\mathrm{can})\longrightarrow MH_*^s(\sqrt{E})
\]
where  $MH_*^s(\sqrt{E})$ denotes the  homology of the Morse chain complex $CM_*^s(\sqrt{E})$ of $E:\Lambda M\to \R$,  graded by the Morse indices of $E$, but filtered by the square root $\sqrt{E}$. Moreover, 
for $t>s>0$, the following  diagram commutes:
\[
\begin{tikzcd}
SH_*^s(D_g^*M,\lambda_\mathrm{can}) \arrow[r, "\Psi"] \arrow[d, "\pi_{s,t}"'] & MH_*^s(\sqrt{E}) \arrow[d, "\iota_{s,t}"] \\
SH_*^t(D_g^*M,\lambda_\mathrm{can}) \arrow[r, "\Psi"] & MH_*^t(\sqrt{E})
\end{tikzcd}
\]
with the map $\iota_{s,t}$ induced by the natural inclusion map on the chain level. As a consequence, the two persistence modules given by the filtered symplectic homology and filtered Morse homology of loop space are isomorphic, and hence, the corresponding barcodes coincide. It suffices to compute the barcode of the latter. 

According to the free homotopy classes, we decompose the free loop space $\Lambda M$ as
\[
\Lambda M=\bigsqcup_{\alpha\in\tilde{\pi}(M)}\Lambda_\alpha M. 
\]
Since $M$ has negative sectional curvature, each noncontractible component $\Lambda_\alpha M$ contains a unique closed geodesic $c_\alpha$ up to a time shift. Set
\[
S^1\cdot c_\alpha=\{c_\alpha(\cdot+s)\;|\; s\in S^1\}. 
\] 
The length functional attains a minimum at  $c_\alpha$, i.e., 
$$\ell_\alpha:=\ell_g(c_\alpha)=\inf_{x\in \Lambda_\alpha M}\ell(x).$$
The energy functional $E$ on $\Lambda_\alpha M$ has the unique Morse-Bott critical submanifold $S^1\cdot c_\alpha$ with Morse index $0$.  

Set 
\[
\Lambda_\alpha^{<t} M=\{x\in \Lambda_\alpha\;|\; \sqrt{E}(x)<t\}.
\]
Clearly, $\Lambda_\alpha^{<t} M=\emptyset$ for $t\leq\ell_\alpha$. The standard deformation lemma (cf.~\cite{Wan} or~\cite[Lemma~16]{Go0}) implies that $\Lambda_\alpha^{<t} M$ is homotopy equivalent to $S^1\cdot c_\alpha$, i.e., 
\[
\Lambda_\alpha^{<t} M\simeq S^1\cdot c_\alpha\cong S^1
\]
for any $t>\ell_\alpha$. 

Since our manifold $M$ of negative curvature has no nonconstant contractible closed geodesics, the energy functional $E$ has the unique critical submanifold consisting of constant closed geodesics in $M$ 
on the contractible component $\Lambda_0 M$. So we get
\[
\Lambda_0^{<t} M\simeq M
\]
for any $t>0$. 

Combining the free homotopy classes above, we obtain

\[
\Lambda^{<t} M = \Lambda_0^{<t} M \sqcup \bigsqcup_{\begin{subarray}{c}\alpha \neq 0 \\ \ell_{\alpha} < t \end{subarray}} \Lambda_{\alpha}^{<t} M.
\]
Hence,
\[
H_*(\Lambda^{<t} M; \mathbb{Z}_2) \cong H_*(M; \mathbb{Z}_2) \oplus \bigoplus_{\begin{subarray}{c}\alpha \neq 0 \\ \ell_{\alpha} < t \end{subarray}} H_*(S^1; \mathbb{Z}_2).
\]
For $0<s<t$, the homomorphism induced  by the natural inclusion map
$\Lambda^{<s}M\hookrightarrow \Lambda^{<t}M$
\[
\iota_{s,t}:
H_*(\Lambda^{<s}M;\Z_2)
\longrightarrow
H_*(\Lambda^{<t}M;\Z_2)
\]
is the identity on the summand $H_*(M;\Z_2)$, and it is the identity on each summand
$H_*(S^1;\Z_2)$
corresponding to a free homotopy class \(\alpha\neq 0\) with
$\ell_\alpha<s$. 
If $s\leq \ell_\alpha<t$,
then the summand corresponding to \(\alpha\) is born at filtration value \(\ell_\alpha\).

Consequently, every bar in the barcode of the persistent module $t\mapsto H_*(\Lambda^{<t}M;\Z_2)$ is infinite. More precisely, once a homology class appears at some filtration value, it persists for all larger filtration values.
Indeed, the summand \(H_*(M;\Z_2)\) is already present for all \(t>0\), coming from the contractible component of the free loop space. For each nontrivial free homotopy class \(\alpha\), a new connected component enters the sublevel set precisely when \(t\) passes the length \(\ell_\alpha\) of the unique closed geodesic in that class. For all larger values of \(t\), this component remains homotopy equivalent to \(S^1\). Hence the two homology classes contributed by
$H_*(S^1;\Z_2)$
are born at \(t=\ell_\alpha\) and never die.
Therefore, 
 the barcode of the persistence module
$t\longmapsto H_*(\Lambda^{<t}M;\Z_2)$ 
is given by
\[
\operatorname{Bar}\bigl(H_*(\Lambda^{<\bullet}M;\Z_2)\bigr)
=
(\Z_2(0,\infty))^{b(M)}
\oplus
\bigoplus_{\alpha\neq 0}
\left(
\Z_2(\ell_\alpha,\infty)_0
\oplus
\Z_2(\ell_\alpha,\infty)_1
\right).
\]
Here \(b(M)=\sum_{k=0}^n\dim H_k(M;\Z_2)\), and the subscripts indicate homological degrees. This gives rise to the barcode $\mathcal{B}(D_g^*M,\lambda_\mathrm{can})$.
\end{proof}

\begin{prop}\label{prop:negcurv-cotangent}
Let $(M^n,g)$ be a closed Riemannian manifold of negative curvature, and let $\varphi_g^t$ be its geodesic flow. Then
\[
\ph(D_g^*M,\lambda_\mathrm{can})=\bh(D_g^*M,\lambda_\mathrm{can})=h_\mathrm{top}(\varphi_g).
\]
\end{prop}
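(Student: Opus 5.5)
We will combine Lemma~\ref{lem:sympbar} with Proposition~\ref{prop:subexp-length}, and then identify $\bh$ with the topological entropy using the growth of closed geodesics. By Lemma~\ref{lem:sympbar}, every bar of $\mathcal{B}(D_g^*M,\lambda_\mathrm{can})$ is infinite and begins at $0$ or at $\ell(c)$ for some $c\in\mathcal{P}_g$. So for $s>\epsilon$ the truncated barcode $\mathcal{FB}_s^\epsilon$ consists of intervals $(a,s]$ with $a\in\{0\}\cup\{\ell(c)\}$ and $a<s-\epsilon$. Every such bar has length in $(\epsilon,s]$, and hence $U_s^\epsilon\leq s$. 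This gives
\[
\limsup_{s\to\infty}\frac{\log U_s^\epsilon}{s}=0,
\]
so the first hypothesis of Proposition~\ref{prop:subexp-length} holds, and we conclude $\ph(D_g^*M,\lambda_\mathrm{can})=\bh(D_g^*M,\lambda_\mathrm{can})$. The slow criterion $\log U_s^\epsilon/\log s\to0$ fails, which is why the statement concerns only the exponential rate.

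Next we compute $\bh$ directly from the explicit barcode. For $0<\epsilon$ and large $s$, all bars are infinite, so
\[
b_s^\epsilon(D_g^*M,\lambda_\mathrm{can})=b(M)+2\,\#\{c\in\mathcal{P}_g\mid \ell(c)<s\}=b(M)+2\,\#\mathcal{P}_g^s .
\]
This count is independent of $\epsilon$, so
\[
\bh=\limsup_{s\to\infty}\frac{\log \#\mathcal{P}_g^s}{s}.
\]
In the proof of Lemma~\ref{lem:sympbar}, the elements of $\mathcal{P}_g$ index the nontrivial free homotopy classes, with one closed geodesic per class. We therefore count unparametrized closed geodesics as in that proof; multiples are counted as their own classes, which is exactly what the barcode sees.

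Finally we invoke Margulis' asymptotic formula for the closed geodesics of a negatively curved closed manifold: $\#\mathcal{P}_g^s\sim e^{hs}/(hs)$ with $h=h_{\mathrm{top}}(\varphi_g)$. Margulis proved this for the geodesic flow, which is Anosov and mixing. Weaker exponential counting, $\lim \frac1s\log\#\mathcal{P}_g^s=h_\mathrm{top}$, already suffices, and this holds for any Anosov flow by Bowen's theorem. Since logarithms are in base $2$, we adopt the same base convention as for $h_\mathrm{top}$ in~\eqref{eq:bartop}, so no constant appears.

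The main obstacle is this last identification. One must ensure that the counting function $\#\mathcal{P}_g^s$ matches the periodic-orbit count used by Bowen. Bowen counts primitive periodic orbits, whereas $\mathcal{P}_g$ includes iterates as separate classes. Iterates of period below $s$ number at most $\sum_{k\geq2}\#\mathcal{P}^{s/k}_{\mathrm{prim}}$, which is $O(s\,e^{hs/2})$ and so does not change the exponential rate. We must also match the parametrization: the Reeb flow on $S_g^*M$ is the unit-speed geodesic flow, so Reeb periods equal lengths. As a consistency check, the inequality $\bh\leq h_\mathrm{top}$ from~\eqref{eq:bartop} gives the upper bound independently, and only the lower bound genuinely needs the counting theorem.
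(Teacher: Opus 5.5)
Your proposal is correct and follows the paper's proof essentially verbatim. Lemma~\ref{lem:sympbar} shows that all bars are infinite, so $U_s^\epsilon\leq s$ and Proposition~\ref{prop:subexp-length} gives equality of the persistent and barcode entropies; then the count $b_s^\epsilon=b(M)+2\#\mathcal{P}_g^s$ together with Margulis' counting theorem identifies the barcode entropy with $h_{\mathrm{top}}(\varphi_g)$. Your additional remarks on iterates versus primitive geodesics, and on matching logarithm bases, make explicit two points the paper leaves implicit without changing the argument.
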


\begin{proof}
By Lemma~\ref{lem:sympbar}, the barcode of $SH_*^s(D_g^*M,\lambda_\mathrm{can})$ has only infinite bars. Hence every bar in $\mathcal{FB}_s^\epsilon(D_g^*M,\lambda_\mathrm{can})$ has length at most $s$, and Proposition~\ref{prop:subexp-length} gives
\[
\ph(D_g^*M,\lambda_\mathrm{can})=\bh(D_g^*M,\lambda_\mathrm{can}).
\]
Moreover, Lemma~\ref{lem:sympbar} gives
\[
|\mathcal B_s^\epsilon(D_g^*M,\lambda_\mathrm{can})|=b(M)+2\#\mathcal P_g^s.
\]
Margulis' theorem~\cite{Ma} implies
\[
h_\mathrm{top}(\varphi_g)=\lim_{s\to\infty}\frac{\log\#\mathcal P_g^s}{s},
\]
and therefore $\bh(D_g^*M,\lambda_\mathrm{can})=h_\mathrm{top}(\varphi_g)$.
\end{proof}

\noindent\textbf{Proof of Theorem~\ref{thm:flexibility}.} 
By~\cite[Thm.A]{EK}, for every $c>2\sqrt{\pi(k-1)}$  there exists a negatively curved Riemannian metric $g$ on $S$ with total area $1$ and $h_\mathrm{top}(\varphi_g)=c$. For the canonical Liouville form $\lambda$ on $T^*S$, the Reeb flow of $\lambda|_{S_g^*S}$ is the co-geodesic flow of $g$, and $\mathrm{vol}_{d\lambda}(D_g^*S)=1$. Proposition~\ref{prop:negcurv-cotangent} gives
\[
\ph(D_g^*S,\lambda)=\bh(D_g^*S,\lambda)=h_\mathrm{top}(\varphi_g)=c.
\]
This completes the proof. \qed

\noindent\textbf{Proof of Corollary~\ref{cor:surface-endpoint}.}
By~\cite{GGM}, the barcode entropy of the geodesic flow on a closed surface agrees with its topological entropy. Thus
\[
\bh(D_g^*S,\lambda_{\mathrm{can}})=h_{\mathrm{top}}(\varphi_g).
\]
Katok's entropy rigidity theorem~\cite{Ka}, with the above volume normalization, gives
\[
h_{\mathrm{top}}(\varphi_g)\geq 2\sqrt{\pi(k-1)},
\]
and equality holds precisely for constant negative curvature metrics. If $g$ has constant negative curvature, then Proposition~\ref{prop:negcurv-cotangent} gives
\[
\ph(D_g^*S,\lambda_{\mathrm{can}})=\bh(D_g^*S,\lambda_{\mathrm{can}}),
\]
and the desired endpoint equality follows. \qed


\begin{thebibliography}{SK}

\bibitem{AASS}
A. Abbondandolo, M.R. Alves,  M. Sağlam and F. Schlenk, 
Entropy collapse versus entropy rigidity for Reeb and Finsler flows.
 {\it  Selecta Math. (N.S.)} {\bf 29} (2023),  Paper No. 67, 99 pp.

\bibitem{Ah} J. Ahn, Barcode entropy and relative symplectic cohomology. Preprint arXiv:2602.07220.

\bibitem{AGR} N. Atienza, R. Gonzalez-Diaz and M. Rucco, Persistent entropy for separating topological features from noise in vietoris-rips complexes, 
{\it  J. Intell. Inf. Syst.} {\bf  52}, no. 3 (2019), pp. 637--655.

\bibitem{AGS} N. Atienza, R. Gonzalez-Diaz and M. Soriano Trigueros, On the stability of persistent entropy and new summary functions for topological data analysis,   {\it  Pattern Recognition} {\bf  107} (2020), 107509.

\bibitem{ALP} M. Audin,  F. Lalonde and L. Polterovich,   \textit{Symplectic rigidity: Lagrangian submanifolds,} from:``Holomorphic curves in symplectic geometry", (M. Audin and J. Lafontaine, editors), Progr. Math., {\bf 117}, Birkh\"{a}user, Basel (1994) 271--321.

\bibitem{BG} E. Barut and V.L. Ginzburg, Barcode growth for toric-integrable Hamiltonian systems. 
Preprint arXiv:2503.08922.

\bibitem{BCG} G. Besson, G. Courtois and S. Gallot, Entropies et rigidit\'{e}s des espaces localement sym\'{e}triques de courbure strictement n\'{e}gative. {\it Geom. Funct. Anal.} {\bf 5} (1995), no. 5, 731--799.

\bibitem{BL} U. Bauer and M. Lesnick, Induced matchings and the algebraic stability of persistence barcodes, {\it  J. Comput. Geom.} {\bf 6} no. 2 (2015), 162--191. 

\bibitem{BMRPV} J. Binchi, E. Merelli, M. Rucco, G. Petri, and F. Vaccarino,  jholes: A tool for understanding
biological complex networks via clique weight rank persistent homology.  {\it Electron. Notes Theor. Comput. Sci., vol.} {\bf 306} (2014), 5--18.

\bibitem{BV} P. Bubenik, T.Vergili, Topological spaces of persistence modules and their properties.
 {\it J. Appl. Comput. Topol., } {\bf 2} (2018), 233--269.

\bibitem{BP3S2} L. Buhovsky, J. Payette, I. Polterovich, L. Polterovich, E. Shelukhin and V. Stojisavljevi\'{c}, Coarse nodal count and topological persistence. Preprint arXiv:2206.06347. 



\bibitem{CCGGO} F. Chazal, D. Cohen-Steiner, M. Glisse, L.J. Guibas, and S.Y. Oudot,
Proximity of persistence modules and their diagrams. In \textit{Proceedings of
the Twenty-fifth Annual Symposium on Computational Geometry}, SCG
'09, pages 237--246, New York, NY, USA, 2009. ACM.

 \bibitem{CSGO} F. Chazal, V. de Silva, M. Glisse, and S. Oudot, {\it The structure and stability of persistence modules.} SpringerBriefs Math. Springer, [Cham], 2016, x+120 pp.

 \bibitem{Ch} Y.-V. Chekanov, Invariant Finsler metrics on the space of Lagrangian embeddings. {\it Math. Z.} {\bf 234} (2000),  605--619.

\bibitem{CGGJK} H. Chintakunta, T. Gentimis, R. Gonzalez-Diaz, M. J. Jimenez, and H. Krim, An entropy
based persistence barcode.  {\it Pattern Recognit.} {\bf 48} (2015),  391--401.

\bibitem{CFH} K. Cieliebak, A. Floer, H. Hofer, Symplectic homology II: A general construction,  {\it Math.
Zeit.} {\bf 218} (1995), 103--122.

\bibitem{CHO} 
K. Cieliebak, N. Hingston and A. Oancea, 
Loop coproduct in Morse and Floer homology. 
{\it J. Fixed Point Theory Appl.} {\bf 25} (2023), Paper No. 59, 84 pp.

 \bibitem{Cin}  E. Çineli, A generalized pseudo-rotation with positive topological entropy.  {\it Bull. Lond. Math. Soc.}  {\bf 57} (2025), no. 4, 1140--1149.

 \bibitem{CGG0} E. Çineli, V.L. Ginzburg and B.Z. G\"{u}rel, Closed Orbits of Dynamically Convex Reeb Flows: Towards the HZ- and Multiplicity Conjectures. Preprint arXiv:2410.13093. 
 
 \bibitem{CGG} E. Çineli, V.L. Ginzburg and B.Z. G\"{u}rel, Topological entropy of Hamiltonian diffeomorphisms: a persistence homology and Floer theory perspective. {\it  Math. Z. }  {\bf 308} (2024), no. 4, Paper No. 73, 38 pp.
 
 \bibitem{CGG2} E. Çineli, V.-L. Ginzburg and B.-Z. G\"{u}rel, On the Barcode Entropy of Reeb Flows.  {\it Selecta Mathematica, New Series}  {\bf 31} (2025), no. 4, Paper No. 64.
 
 \bibitem{CGG3} E. Çineli, V.-L. Ginzburg and B.-Z. G\"{u}rel, On the growth of the Floer barcode. {\it J. Mod.
Dyn. }  {\bf  20} (2024), 275--298.
 
 \bibitem{CGG4} E. Çineli, V.-L. Ginzburg and B.-Z. G\"{u}rel, From Barcode Entropy to Metric Entropy. Preprint arXiv:2507.13215.

 
  \bibitem{CEH} D. Cohen-Steiner, H. Edelsbrunner, and J. Harer. Stability of persistence diagrams. In \textit{ Proc. 21st ACM Sympos. Comput. Geom.} (2005),  263--271.
 
\bibitem{EK} A. Erchenko and A. Katok. Flexibility of entropies for surfaces of negative curvature. {\it Israel J. Math.} {\bf  232} (2019) 631--676.
 
 \bibitem{FLS} E. Fender, S. Lee and B. Sohn, Barcode entropy for Reeb flows on contact manifolds with exact Liouville fillings, {\it Comm. Contemp. Math.} (2025). doi.org/10.1142/S0219199725500440. 
 
 \bibitem{Fer0} R. Fernandes, Barcode entropy and wrapped Floer homology. Preprint arXiv:2410.05528.
 
  \bibitem{Fer1} R. Fernandes, Wrapped Floer homology and hyperbolic sets. Preprint arXiv:2501.06654.
  

  
 \bibitem{FS}   U. Frauenfelder and F. Schlenk,  Volume growth in the component of the Dehn–Seidel twist. {\it Geom. Funct. Anal.} {\bf 15}  (2005), 809--838.
 
  \bibitem{FM} A. Freire and R. Ma\~{n}\'{e}, On the entropy of the geodesic flow in manifolds without conjugate points.
{\it Invent. Math.} {\bf 69} (1982), no. 3, 375--392.

 \bibitem{GGM}  V.-L. Ginzburg, B.-Z. G\"{u}rel and M. Mazzucchelli, Barcode entropy of geodesic flows. To appear in Journal of the European Mathematical Society.

\bibitem{Go0} W. Gong, Infinitely many noncontractible closed magnetic geodesics on non-compact manifolds.
{\it Differential Geom. Appl.} {\bf 87} (2023), Paper No. 101977, 34 pp.

 \bibitem{Go} W. Gong, The unbounded Lagrangian spectral norm and wrapped Floer cohomology, {\it J. Geom. Phys.}  {\bf 202} (2024), Paper No. 105223, 41 pp.
 
    \bibitem{GX} W. Gong and J. Xue, Floer homology in the cotangent bundle of a closed Finsler manifold and noncontractible periodic orbits. {\it Nonlinearity} {\bf 33} (2020), no. 12, 6297--6348.
    
    

 
 \bibitem{Ho} H. Hofer, On the topological properties of symplectic maps. {\it Proc. Roy. Soc. Edinburgh Sect. A } {\bf 115} (1990),  25--38.
	
\bibitem{HZ} H. Hofer, and E. Zehnder, \textit{Symplectic invariants and Hamiltonian dynamics}. Birkh\"{a}user Advanced Texts: Basler Lehrb\"{a}cher. Birkh\"{a}user Verlag, Basel, 1994. xiv+341 pp.


\bibitem{LNV} D. Le Peutrec, F. Nier and C. Viterbo, Bar codes of persistent cohomology and Arrhenius law for  $p$-forms. {\it  Astérisque} {\bf 450} (2024), viii+194 pp.

\bibitem{LSV} 
F. Le Roux, S. Seyfaddini and C. Viterbo, Barcodes and area-preserving homeomorphisms.
 {\it  Geom. Topol.} {\bf  25} (2021), 2713--2825.
 
 \bibitem{Ka} A. Katok. Entropy and closed geodesics. {\it Ergod. Theory Dyn. Syst.} {\bf 2} (1982) 339--365.
 

 \bibitem{KT} A. Katok and J.P. Thouvenot,  Slow entropy type invariants and smooth realization of commuting measure preserving
transformations. {\it Ann. Inst. H. Poincar\'{e} Probab. Stat.} {\bf 33}  (1997), 323--338.
 
 \bibitem{KS} A. Kislev  and E. Shelukhin,  Bounds on spectral norms and barcodes. {\it Geom. Topol.} {\bf 25} (2021), 3257--3350.
 
 
 \bibitem{Ku} A.G. Kushnirenko, Metric invariants of entropy type, {\it Uspehi Mat. Nauk.} {\bf  22} (1967), 57--65.
 
\bibitem{Ma} G. A. Margulis. Applications of ergodic theory to the investigation of manifolds of negative
curvature. {\it Functional Analysis and Its Applications. } {\bf 3} (1969) 335–336.

\bibitem{Ma2} A. Manning,
Topological entropy for geodesic flows.
{\it Ann. of Math. (2)} {\bf 110} (1979), no. 3, 567--573.
 
\bibitem{Me} M. Meiwes, On the barcode entropy of Lagrangian submanifolds.  Preprint arXiv:2401.07034.


\bibitem{MPRT} E. Merelli, M. Piangerelli, M. Rucco and D. Toller,  A topological approach for multivariate
time series characterization: the epileptic brain. {\it EAI Endorsed Transactions on Self-Adaptive 
Systems} {\bf16}, no. 7, 2016.


\bibitem{MNB}J.V. Michalowicz, J.M. Nichols and F. Bucholtz, \textit{Handbook of differential entropy},  CRC Press, Boca Raton, FL, 2014, xviii+226 pp.

\bibitem{Po}  L. Polterovich, {\it The Geometry of the Group of Symplectic Diffeomorphisms}, Lectures in Mathematics ETH Z\"{u}rich, Birkh\"{a}user Verlag, Basel, 2001.

\bibitem{PRSZ} L. Polterovich, D. Rosen, K. Samvelyan and J. Zhang, \textit{Topological Persistence in Geometry and Analysis}.
University Lecture Series, vol. 74. American Mathematical Society, Providence, RI (2020). 

\bibitem{PS} L. Polterovich and E. Shelukhin, Autonomous Hamiltonian flows, Hofer's geometry and persistence modules.  {\it Selecta Math. (N.S.)} {\bf 22} (2016), 227--296.

\bibitem{PSS} L. Polterovich, E. Shelukhin and V. Stojisavljevi\'{c}, Persistence modules with operators in Morse and Floer theory. {\it Mosc. Math. J.} {\bf  17} (2017),  757--786.

\bibitem{RCMP}  M. Rucco, F. Castiglione, E. Merelli and M. Pettini, Characterisation of the idiotypic immune
network through persistent entropy.  in \textit{Proceedings of ECCS 2014}, pp. 117--128, 2014.

\bibitem{RGJA} M. Rucco, R. Gonzalez-Diaz, M. Jimenez, N. Atienza, and et al. A new topological entropybased approach for measuring similarities among piecewise linear functions. {\it Signal Process} {\bf 134} ( 2017), 130--138.

\bibitem{Sh} C. Shannon,  A mathematical theory of communication. {\it Bell System Technical Journal,} {\bf
 27} (1948), no. 3, pp. 379--423.

\bibitem{She} E. Shelukhin, On the Hofer-Zehnder conjecture. {\it Ann. of Math. } {\bf195} (2022),  775--839.

\bibitem{Se} P. Seidel. {\it A biased view of symplectic cohomology.} Current developments in mathematics,
2006, 211253, Int. Press, Somerville, MA (2008).

\bibitem{Us1} M. Usher, Duality in filtered Floer-Novikov complexes. {\it J. Topol. Anal.}  {\bf 2} (2010), 233--258.

\bibitem{Us2} M. Usher,  Hofer's metrics and boundary depth. {\it Ann. Sci. \'{E}c. Norm. Sup\'{e}r.} {\bf  46} (2013),  57--128 (2013).

\bibitem{Us3} M. Usher,  Boundary depth in Floer theory and its applications to Hamiltonian dynamics and coisotropic submanifolds. {\it Israel J . Math.} {\bf 184}  (2011), 1--57.

\bibitem{UZ} M. Usher and J. Zhang,  Persistent homology and Floer-Novikov theory. {\it Geom. Topol.} {\bf 20} (2016), 3333--3430.

\bibitem{Vi} C. Viterbo, Functors and computations in Floer homology with applications, I,
{\it Geom. Funct. Anal.} {\bf  9} (1999), 985--1033.

\bibitem{Wa} P. Walters,  {\it An introduction to ergodic theory}. Grad. Texts in Math., 79 Springer-Verlag, New York-Berlin, 1982, ix+250 pp.

\bibitem{WSBGL} X. Wang, F. Sohel, M. Bennamoun, Y. Guo, and H. Lei, Scale space clustering evolution for
salient region detection on 3d deformable shapes. {\it Pattern Recognit.} {\bf 71} (2017), 414--427.

\bibitem{Wan} Z.Q. Wang, Equivariant Morse theory for isolated critical orbits and its applications to nonlinear problems. {\it Lecture Notes in Math.}, vol.1306, Springer, 1988, pp.202–221.

\bibitem{ZC} A. Zomorodian and G. Carlsson, Computing persistent homology. {\it Discrete
Comput. Geom.} {\bf 33} (2005), 249--274.

\end{thebibliography}
\end{document}